\newcommand{\eqdef}{\stackrel{\scriptscriptstyle\rm def}{=}}
\newtheorem{theorem}{Theorem}
\newtheorem{corollary}{Corollary}
\newtheorem*{corollary*}{Corollary}
\newtheorem{definition}{Definition}
\newtheorem{lemma}{Lemma}
\newtheorem*{remark}{Remark}
\newtheorem{example}{Example}
\newtheorem*{definition*}{Definition}
\newtheorem*{thma}{Theorem A}
\newtheorem*{thmb}{Theorem B}
\newtheorem*{thmc}{Theorem C}
\newcommand{\llb}{\llbracket}
\newcommand{\rrb}{\rrbracket}
\newcommand{\con}{{\rm con}}
\newcommand{\res}{{\rm res}}
\renewcommand{\top}{{\rm top}}
\newcommand{\cM}{\EuScript{M}}
\newcommand{\bR}{{\mathbb R}}
\newcommand{\bZ}{{\mathbb Z}}
\newcommand{\bN}{{\mathbb N}}
\newcommand{\cF}{{\mathcal F}}
\newcommand{\cA}{{\mathcal A}}
\newcommand{\cG}{{\mathcal G}}
\newcommand{\cL}{{\mathcal L}}
\newcommand{\bi}{{\mathbbm i}}
\newcommand{\bl}{{\mathbbm l}}
\title{Ergodic theory on coded shift spaces}
\author[T. Kucherenko]{Tamara Kucherenko}\address{Department of Mathematics, The City College of New York, New York, NY, 10031, USA}\email{tkucherenko@ccny.cuny.edu}
\author[M. Schmoll]{Martin Schmoll}
\address{Department of Mathematical Sciences, Clemson University, Clemson SC}\email{schmoll@clemson.edu}
\author[C. Wolf]{Christian Wolf}\address{Department of Mathematics, The City College of New York and The Graduate Center of CUNY, NY, 10031, USA}\email{cwolf@ccny.cuny.edu}
\begin{document}

\begin{abstract}
We study ergodic-theoretic properties of coded shift spaces. A coded shift space is defined as a closure of all bi-infinite concatenations of words from a fixed countable generating set. We derive sufficient conditions for the uniqueness of measures of maximal entropy and equilibrium states of H\"{o}lder continuous potentials based on the partition of the coded shift into its concatenation set (sequences that are concatenations of generating words) and its residual set (sequences added under the closure). In this case we provide a simple explicit description of the measure of maximal entropy.
We also obtain
flexibility results for the entropy on the concatenation and residual sets. Finally, we prove a local structure theorem for intrinsically ergodic coded shift spaces which shows that our results apply to a larger class of coded shift spaces compared to previous works by Climenhaga \cite{Cl}, Climenhaga and Thompson \cite{ClTh, ClTh2}, and Pavlov \cite{Pavlov}.
\end{abstract}
\keywords{Symbolic dynamics, coded shifts, entropy, measures of maximal entropy, equilibrium states}
\subjclass[2000]{37A35, 37B10, 37B40, 37D35}
\maketitle

\section{Introduction}

\subsection{Motivation}
It is one of the central goals in the ergodic theory of dynamical systems to identify the typical dynamics of a given system. In this context the word `typical' refers to a set of points of full measure with respect to an invariant probability measure. For many classes of dynamical systems the set of invariant probability measures is rather large which raises the question which invariant measures are the natural choice to use as reference measures. Arguably the most natural reference measure choice is a measure
whose typical points carry the entire topological orbit complexity in terms of entropy. Such a measure (if it exists) is called a measure of maximal entropy. If a measure of maximal entropy exists and is unique the dynamical system is called intrinsically ergodic \cite{Wei1, Wei2}.

In this paper, we work in the context of symbolic dynamical systems over a finite alphabet. There is a vast amount of research on the questions concerning the uniqueness of measures of maximal entropy and equilibrium states, e.g., \cite{jB05,Cl,ClTh,ClTh2,GRP,H,Krieger,Pavlov}.
We study coded shift spaces which were introduced by Blanchard and Hansel \cite{BH} as  a natural generalization of transitive sofic shifts. Coded shifts include a wide range of well-known classes of symbolic systems, e.g., S-gap shifts, generalized gap shifts, and beta shifts. Our goal is to develop a fairly complete theory about the uniqueness of measures of maximal entropy and equilibrium states for H\"older continuous potentials. Roughly speaking, our classification depends on whether the entropy (or pressure) is concentrated on the concatenation set or the residual set. In the entropy case our results do not only guaranty the uniqueness of the measure of maximal entropy, but also provide an explicit formula for the values of such measure on cylinders. In particular, we obtain new information about the structure of measures of maximal entropy for such well-studied shift spaces as beta shifts and the Dyck shift.   

While evaluating the entropy and pressure on the concatenation and the residual sets we also derive flexibility results concerning their entropies. Moreover, we establish a local structure theorem for intrinsically
ergodic coded shifts. From the structure theorem we conclude that our results apply to an abundance of coded shift spaces for which previous results of Climenhaga \cite{Cl}, Climenhaga and Thompson \cite{ClTh, ClTh2}, and Pavlov \cite{Pavlov} do not provide answers about the uniqueness of measures of maximal entropy and equilibrium states.

\subsection{Main results}
Let $X$ be a bi-infinite shift space over a finite alphabet $\cA$. This means that $X$ is a closed (with respect to the Tychonoff product topology), shift invariant subset of the full shift $\cA^\bZ$. Denote the left shift map on $X$ by $\sigma$. We say $(X,\sigma)$ is a coded shift space (or simply $X$ is coded), if there exists a generating set $\cG$ which is a subset of the set of all finite words over $\cA$
such that $X = X(\cG)$ is the smallest shift space that contains the concatenation set $X_{\rm con}=X_{\rm con}(\cG)$, i.e. all bi-infinite concatenations of generators. In other words, $X$ is the topological closure of $X_{\rm con}$. We refer to \cite{LindMarcus} for equivalent ways to define coded shifts.
We call the set $X_{\rm res}=X_{\rm res}(\cG)=X\setminus X_{\rm con}$ the residual set of $X$. We refer to Section 3 for more details. We recall, that if the generating set is finite, then $X$ is a transitive sofic shift. In particular, its theory of measures of maximal entropy and equilibrium states for H\"older continuous potentials is well-understood. Therefore, the focus of this paper is the case of infinite generating sets.

A priori, it is possible that points in $X_{\rm con}$ have multiple representations as concatenations of generators. We say a coded shift $X$ is uniquely representable, if  there exists a generating set $\cG$ such that $X=X(\cG)$ and every point in $X_{\rm con}(\cG)$ has a unique representation as a concatenation of generators. In this situation we also say that $\cG$ uniquely represents $X_{\rm con}(\cG)$. 
A weaker condition is that $X=X(\cG)$ has the unique decipherability property, i.e., every finite word in $X$ has at most one representation as a concatenation of generators. It is  shown by Blanchard and Hansel \cite{BH} in 1986 that every coded shift has a generating set $\cG$ which has the unique decipherability property.
By using results from \cite{FF}, B\'{e}al, Perrin and Restivo \cite{BPR} recently proved that every coded shift is uniquely representable. 
Moreover, the proof in \cite{BPR} is constructive. In particular, the proof can be used as a blueprint for a computer program which, based on the input of an arbitrary generating set $\cG'$ outputs a generating set $\cG$ such that $\cG$ uniquely represents $X_{\rm con}(\cG)$ and $X(\cG)=X(\cG')$. Based on this result, it is reasonable to assume that we have a generating set which uniquely represents its concatenation set.

We remark that the set of coded shifts spans a rather broad collection of shift spaces. Indeed, every shift space $X$ can be
expressed by a representation of $X$, i.e., a countable directed labeled graph such that $X$ is the closure of the set of bi-infinite sequences which are associated with bi-infinite paths on the graph. Coded shifts are precisely those shift spaces which have a strongly connected representation, see \cite{LindMarcus}. It turns out that many well-known classes of shifts are coded including S-gap shifts, generalized gap shifts, transitive Sofic shifts, beta shifts, and many more, see, e.g., \cite{BDWY}.

Let $X$ be a shift space and let $h_{\rm top}(X)$ denote the topological entropy of $\sigma\vert_X$. The variational principle for the topological entropy states that $h_{\rm top}(X)=\sup_{\mu\in \cM_\sigma(X)} h_\sigma(\mu)$ where $\cM_\sigma(X)$ is the space of all Borel $\sigma$-invariant probability measures on $X$ endowed with the weak$^\ast$ topology, and $h_\sigma(\mu)$ denotes the measure-theoretic entropy of $\mu$ defined in \eqref{eqn:def:meas_ent}. 
We say a measure $\mu\in \cM_\sigma(X)$ is a measure of maximal entropy for $(X,\sigma)$ if $h_{\rm top}(X)=h_\sigma(\mu).$ It follows from the expansivity of the shift map that there  exists at least one (ergodic) measure of maximal entropy. However, the measure of maximal entropy is, in general, not unique \cite{Sh,Krieger,jB05,H}; if it is unique the system is called intrinsically ergodic.
Roughly speaking, if an ergodic measure $\mu$ is a measure of maximal entropy its typical orbits carry the full orbit complexity in terms of entropy.

Our approach is to study the entropy  on the sets  $X_{\rm con}$ and $X_{\res}$ separately. We define
 the \emph{concatenation entropy} of  $X$ by
 \begin{equation}\label{secentropy}
h_{\rm con}(X)= \sup\{h_\sigma(\mu): \mu\in\cM_\sigma(X)\text{ and } \mu(X_{\rm con})=1\},
 \end{equation}
 and the \emph{residual entropy} of $X$ is defined by
 \begin{equation}\label{resentropy}
  h_{\rm res}(X)=\sup\{h_\sigma(\mu): \mu\in\cM_\sigma(X)\text{ and } \mu(X_{\rm res})=1\}.
 \end{equation} 
 We note that the concatenation entropy and residual entropy were introduced in \cite{BDWY} where they were called sequential entropy and limit entropy, respectively.
It is shown in Lemma \ref{gcylindermeas} that $X_{\rm con}$ and $X_{\res}$ are Borel sets and thus $h_{\rm con}(X)$ and $h_{\rm res}(X)$ are well defined. Clearly, $X_{\rm con}$ and $X_{\rm res}$ are shift invariant sets.
Therefore, $h_{\rm top}(X)=\max\{h_{\rm con}(X), h_{\rm res}(X)\}$. Moreover, every ergodic measure of maximal entropy must put full measure either on $X_{\rm con}$ or on $X_{\rm res}$. We refer to the analogous definitions of the concatenation respectively residual pressure to Section \ref{sec:unieqsta}.

Our first main result concerns the flexibility of the concatenation entropy. It is not too hard to see that every invariant measure which puts full measure on $X_{\rm res}$ must also put full measure on a certain subshift called the limit set $X_{\rm lim}$ which is the set of points $x\in X$ 
such that every subword of $x$ is a subword of some generator in $\cG$ (see \eqref{def:X_lim} for the precise definition). Since the language of the concatenation set contains the language of the limit set one might heuristically expect that $h_{\rm con}(X)\geq h_{\rm res}(X)$ does hold in general. The problem with this heuristic argument is that $X_{\rm con}$ is not closed.
In particular, the next theorem  (see Theorem \ref{thm:construstion} in the text) shows that if a fixed subshift $Z$ is contained in the residual set, the concatenation entropy might still be arbitrary small.

\begin{thma}  Suppose $Z$ is a transitive subshift on a finite alphabet $\cA$ which is not the full shift. Then there exists a generating set $\cG$ on the same alphabet $\cA$ such that for $X=X(\cG)$ the following holds:
  \begin{enumerate}
    \item[(i)] $\cG$ uniquely represents $X_{\rm con}$;
    \item[(ii)] $Z\subset X_{\rm res}$, and if $\mu\in \cM_\sigma(X)$ with $\mu(X_{\rm res})=1$ then $\mu(Z)=1$;
    \item[(iii)] $h_\con(X)<\epsilon$.
  \end{enumerate}
\end{thma}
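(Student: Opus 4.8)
The plan is to build the generating set from long words drawn from the language $\cL(Z)$ of $Z$, each tagged with a single copy of a fixed forbidden word of $Z$, which will serve simultaneously as an ``escape marker'' (forcing sequential points out of $Z$) and as a synchronizing delimiter (giving unique representation). Since $Z$ is not the full shift, I would fix a word $v=v_1\cdots v_m\notin\cL(Z)$ of minimal length, so that every proper subword of $v$ lies in $\cL(Z)$. For each $n$, transitivity of $Z$ yields a single word $u_n\in\cL(Z)$ with $v_2\cdots v_m\,u_n\in\cL(Z)$ such that $u_n$ contains every word of $\cL(Z)$ of length at most $n$ as a subword; I then pad $u_n$ with long blocks of the periodic word $w$ (where $w^\infty\in Z$) so that the lengths $\ell_n=|v\,u_n|$ are distinct, increasing, and grow as fast as I wish. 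Set $g_n=v\,u_n$ and $\cG=\{g_n:n\in\bN\}$, $X=X(\cG)$. By construction $v$ occurs in $g_n$ exactly once, namely as its prefix, and every subword of $g_n$ avoiding that occurrence lies in $\cL(Z)$.

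To obtain (ii), first note that every point of $X_\seq$ is a bi-infinite concatenation of the $g_n$ and hence contains $v$, while no point of $Z$ contains $v$; thus $Z\cap X_\seq=\varnothing$. Conversely, given $z\in Z$ and $N$, the central block $z_{[-N,N]}\in\cL(Z)$ is a subword of $g_n$ for $n\ge 2N+1$, so shifting the periodic concatenation $\cdots g_n g_n g_n\cdots$ to center this block produces a point of $X_\seq$ agreeing with $z$ on $[-N,N]$; letting $N\to\infty$ gives $z\in\overline{X_\seq}=X$, whence $Z\subset X\setminus X_\seq=X_\res$. For the measure statement I would use that any $\mu\in\cM_\sigma(X)$ with $\mu(X_\res)=1$ is carried by the limit set $X_{\rm lim}$. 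Since a subword of any generator contains at most one occurrence of $v$, every point of $X_{\rm lim}$ contains at most one occurrence of $v$; by Poincar\'e recurrence an invariant measure cannot give positive mass to the set where $v$ occurs, so $\mu$ is supported on the $v$-free points of $X_{\rm lim}$. These points have all of their subwords in $\cL(Z)$ and therefore lie in $Z$, giving $\mu(Z)=1$.

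For (i) I would verify that in any concatenation $\cdots g_{n_{-1}}g_{n_0}g_{n_1}\cdots$ the word $v$ occurs exactly at the generator boundaries: it cannot occur inside the block $v_2\cdots v_m u_{n_i}\in\cL(Z)$, and by choosing $v$ to have no nontrivial self-overlap (replacing it by a suitable forbidden word of $Z$ if necessary) no occurrence can straddle a boundary except the intended one. Consequently the decomposition into generators is forced by cutting immediately before each $v$, so $\cG$ uniquely represents $X_\seq$.

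Finally, for (iii) the key point is that $\cG$ contains exactly one generator of each length $\ell_n$, so the number of length-$L$ factors of $X_\seq$ is governed by the generating series $F(z)=\sum_n z^{\ell_n}$; a standard counting argument then bounds $h_\seq(X)$ by the unique $s\ge 0$ with $\sum_n e^{-s\ell_n}=1$. As this series is decreasing in $s$ and the lengths $\ell_n$ can be taken large enough that $\sum_n e^{-\epsilon\ell_n}<1$, that root is strictly below $\epsilon$, yielding $h_\seq(X)<\epsilon$. The main obstacle is the simultaneous optimization built into the $g_n$: each generator must be long enough to contain all short words of $\cL(Z)$ (forcing $\ell_n$ to grow at least like the complexity of $Z$), yet there must be few enough generators per length to push the series below $1$ at $s=\epsilon$. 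The freedom to pad with the periodic word $w$ is exactly what reconciles these two demands, and preventing the single marker $v$ from producing spurious occurrences is the delicate bookkeeping underlying (i).
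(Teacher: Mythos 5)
Your construction is in essence the paper's: a minimal forbidden word of $Z$ used as a marker, generators built from words of $\cL(Z)$ that contain all short factors of $Z$, periodic padding $p^{m_n}$ to force one generator per (rapidly growing) length, the characteristic series $\sum_n e^{-s\ell_n}$ for the sequential entropy, and Pavlov's Lemma 4.1 for (ii). Your Poincar\'e-recurrence argument killing the marked points of $X_{\rm lim}$ is correct and is a reasonable substitute for the paper's cleaner arrangement, in which the marker $a=k\tilde a$ is \emph{split} across generator boundaries (each generator ends with $k$ and begins with $\tilde a$), so that every generator lies in $\cL(Z)$, hence $X_{\rm lim}=Z$ exactly and (ii) is immediate. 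However, there are two genuine gaps.

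The serious one is in (iii). Your claim that ``the number of length-$L$ factors of $X_{\rm seq}$ is governed by the generating series $F(z)=\sum_n z^{\ell_n}$'' is false: since each $u_n$ contains every word of $\cL(Z)$ of length at most $n$, the language of $X_{\rm seq}$ contains all of $\cL(Z)$, so the exponential growth rate of the factors of $X_{\rm seq}$ is at least $h_{\rm top}(Z)$, which can vastly exceed $\epsilon$. No counting of factors can yield $h_\seq(X)<\epsilon$; this is exactly the subtlety the paper emphasizes (the heuristic fails because $X_{\rm seq}$ is not closed, and $h_\seq$ is a supremum of \emph{measure-theoretic} entropies over measures with $\mu(X_{\rm seq})=1$). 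The paper closes this gap with Lemma \ref{lem:entropy_formula}: it builds an explicit shift-commuting bijection between $X_{\rm seq}$ and the interior of a (generalized) $S$-gap shift $Y$, transports invariant measures in both directions to get $h_\seq(X)=h_{\rm top}(Y)$, and only then computes $h_{\rm top}(Y)$ via the characteristic equation $\sum_n\lambda^{-\ell_n}=1$ (where counting is legitimate, because on the closed shift $Y$ the generator structure is encoded in the symbols). Without this measure-transport step, or an equivalent induced-system/Abramov argument, your bound $h_\seq(X)<\epsilon$ is unproven.

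The second gap is in (i). Your parsing rule ``cut immediately before each occurrence of $v$'' requires that every occurrence of $v$ in a point of $X_{\rm seq}$ sit at a generator boundary, and you exclude straddling occurrences only by assuming $v$ unbordered, ``replacing it by a suitable forbidden word of $Z$ if necessary.'' That replacement is not justified, and it interacts badly with your proof of (ii): an unbordered forbidden word need not be a \emph{minimal} forbidden word (in the golden mean shift the minimal forbidden word $11$ is bordered, while the unbordered forbidden word $110$ has the forbidden proper prefix $11$), and your limit-set argument used minimality, namely that all proper prefixes of $v$ lie in $\cL(Z)$, to conclude that $v$-free points of $X_{\rm lim}$ belong to $Z$. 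So you must either prove that a marker with all the required properties simultaneously exists, or control the suffixes of the padded words $u_n$ so that no border of $v$ can straddle a boundary, or adopt the paper's split-marker device, which is engineered precisely so that the forbidden word arises only across boundaries.
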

Theorem A shows that  it is not possible to obtain a general uniqueness result for measures of maximal entropy for the case $h_{\rm res}(X)>h_{\rm con}(X)$. Indeed, if the subshift $Z$ in Theorem A is of positive topological entropy and has multiple measures of maximal entropy (e.g., one of the shifts constructed in \cite{jB05,H,Sh}) then for $0<\epsilon<h_{\rm top}(Z)$ the coded shift $X$ also has multiple measures of maximal entropy (which are precisely the measures which maximize the entropy on $Z$). 
We note here that the main difficulty is to prove property (iii). Since $X_\con$ is not a subshift, estimates on word complexity cannot be applied. To overcome this difficulty we design a certain coded shift $Y$ with $h_{\rm con} (Y)>  h_{\rm res}(Y)$ such that every invariant measure on $X$ which puts full measure on the concatenation set is measure-theoretic isomorphic to a measure on the concatenation set of $Y$. This allows us to deduce (iii) from the corresponding property of $Y$.

Next we consider the case $h_{\rm res}(X)=h_{\rm con}(X)$. In this case it is possible to produce examples of coded shifts $X$ with multiple measures of maximal entropy, see Examples \ref{ex:hseq=hres} and \ref{ex:Dyck}.


Finally, we consider the case $h_{\rm con}(X)>h_{\rm res}(X)$ and establish the uniqueness of the measure of maximal entropy.  To characterize this measure we introduce and develop in Section \ref{sec:G-topology} the theory of the so-called $\cG$-Bernoulli measures on a coded shifts $X$ with generating set $\cG$.  These are probability measures which assign full measure to the concatenation set and share some similarities with Bernoulli measures which contain the unique measures of maximal entropy of full shifts. Precisely, we say that an invariant probability measure $\mu$ is $\cG$-Bernoulli if there exist positive real numbers $p_g, g\in \cG$ with $\sum_{g\in \cG} p_g=1$ and $c>0$ such that
\[
\llb g_{0}\dots g_{k}\rrb=\frac{1}{c}\, p_{g_{0}}\cdot\ldots\cdot p_{g_{k}}
\]
for all $g_{0},\dots, g_{k}\in \cG$.  Here $\llb g_{0}\dots g_{k}\rrb$ denotes the $\cG$-cylinder consisting of all points  $x=\cdots g'_{{-1}}.g'_{0}\cdots g'_{k}g'_{{k+1}}\cdots\in X_{\rm con}$ such that $g'_{j}=g_{j}$ for $j=0,\dots,k$. We refer to Section \ref{sec:G-topology} for further details about $\cG$-cylinders and $\cG$-Bernoulli measures. With these definitions in mind we have the following result.

\begin{thmb}Let $X=X(\cG)$ be a coded shift such that $\cG$ uniquely represents $X_{\con}$ and $h_{\con}(X)>h_{\res}(X)$.
Then $X$ has a unique measure of maximal entropy $\mu_{\max}$. Moreover, $\mu_{\max}$  is
$\cG$-Bernoulli with $p_g=\exp(-|g|h_\top(X))$ and $c=\sum_{g\in \cG} |g| \exp(-|g|h_\top(X))$.
\end{thmb}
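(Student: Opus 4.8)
The plan is to reduce everything to thermodynamic formalism on the full shift over the countable alphabet $\cG$, using the unique representation to realize $(X_\seq,\sigma)$ as a Kakutani tower. Since $h_\seq(X)>h_\res(X)$ we have $h_\top(X)=h_\seq(X)=:h$, and by expansivity of $\sigma$ a measure of maximal entropy exists. Every ergodic maximal measure assigns full mass to $X_\seq$ or to $X_\res$; those carried by $X_\res$ have entropy at most $h_\res(X)<h$, so passing to ergodic decompositions shows that every measure of maximal entropy $\mu$ satisfies $\mu(X_\seq)=1$. Thus it suffices to analyze the invariant measures in $\cM_\sigma(X)$ that are carried by $X_\seq$.

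First I would set up the tower. Let $\Sigma=\cG^{\bZ}$ carry the shift $S$ and the roof $r(\omega)=|\omega_0|$, and form $\Sigma^r=\{(\omega,j):\omega\in\Sigma,\ 0\le j<|\omega_0|\}$ with tower map $T$. Unique representation makes the concatenation map $\Phi\colon\Sigma^r\to X_\seq$ a bijection intertwining $T$ and $\sigma$. I would check that $\Phi$ is continuous and, since $\Sigma^r$ and $X_\seq$ are standard Borel, that $\Phi$ is a Borel isomorphism (Lusin--Souslin), so it transports invariant measures and preserves measure-theoretic entropy. The base $\Sigma\times\{0\}$ satisfies $\bigcup_{n\ge0}T^{n}(\Sigma\times\{0\})=\Sigma^r$, hence has positive mass for every invariant probability measure; consequently any $\sigma$-invariant probability measure $\mu$ on $X_\seq$ corresponds to a unique $S$-invariant probability measure $\nu$ on $\Sigma$ with $\int r\,d\nu=1/\mu(\Sigma\times\{0\})<\infty$, and Abramov's formula gives $h_\sigma(\mu)=h_\nu(S)/\int r\,d\nu$.

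Next I would run the optimization on the base. Taking the supremum over such $\nu$ yields $h=\sup_\nu h_\nu(S)/\int r\,d\nu$, so the potential $\phi(\omega)=-h\,|\omega_0|$ satisfies $h_\nu(S)+\int\phi\,d\nu=h_\nu(S)-h\int r\,d\nu\le 0$ for every $\nu$, i.e. $P_\Sigma(\phi)\le 0$; the measure $\nu^{*}$ attached to a maximal measure attains equality, so $P_\Sigma(\phi)=0$ and $\nu^{*}$ is an equilibrium state for $\phi$. Since $\phi$ depends only on the zeroth coordinate, I would pin down its equilibrium states by a direct computation: as the time-$0$ partition generates, $h_\nu(S)=H_\nu(\omega_0\mid\omega_{-1},\omega_{-2},\dots)\le H_\nu(\omega_0)$, whence $h_\nu(S)+\int\phi\,d\nu\le\sum_{g}\nu([g])\log\big(e^{-h|g|}/\nu([g])\big)\le\log\sum_g e^{-h|g|}$ by the Gibbs inequality. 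Both inequalities become equalities exactly when $\nu$ is the Bernoulli measure with marginal $p_g=e^{-h|g|}/\sum_{g'}e^{-h|g'|}$; hence the equilibrium state is unique, $\sum_g e^{-h|g|}=e^{P_\Sigma(\phi)}=1$, and $p_g=e^{-h|g|}$.

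Finally I would translate back. Uniqueness of $\nu^{*}$ together with the measure bijection gives uniqueness of $\mu_{\max}$, and the tower formula $\mu_{\max}(\Phi(A\times\{0\}))=\nu^{*}(A)/\int r\,d\nu^{*}$ applied to $A=[g_0\cdots g_k]$ yields $\llb g_0\cdots g_k\rrb=p_{g_0}\cdots p_{g_k}/c$ with $c=\int r\,d\nu^{*}=\sum_g|g|\,e^{-h|g|}$ (finite because $\mu_{\max}$ is a probability measure), which is precisely the asserted $\cG$-Bernoulli form. The main obstacle is the rigorous tower reduction in this non-compact setting: verifying that $\Phi$ is a Borel isomorphism, that the correspondence of invariant measures is a genuine bijection with finite roof integral, and that Abramov's formula applies, together with pinning down that the supremum defining $h_\seq(X)$ is actually attained, which is exactly where the hypotheses $h_\seq(X)>h_\res(X)$ and expansivity enter. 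The one-coordinate equilibrium computation is then routine.
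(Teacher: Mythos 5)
Your proposal is correct in substance and shares the paper's overall skeleton --- reduce to $X_\seq$ using $h_\seq(X)>h_\res(X)$, realize $(X_\seq,\sigma)$ as a tower/induced system over the countable full shift $\Sigma=\cG^{\bZ}$ via unique representability (this is exactly the paper's conjugacy $\pi$ and the bijection $\bi,\bl$ of Lemma \ref{lem:lift}), and apply Abramov's formula to the induced potential $\psi(y)=-h|g_{y_0}|$ where $h=h_\top(X)$. Where you genuinely diverge is the uniqueness mechanism on the base. The paper invokes the Mauldin--Urbanski theory of Gibbs and equilibrium states for countable shifts, which requires verifying the summability conditions \eqref{MU1}--\eqref{MU2}; the harder one, $\sum_g |g|e^{-h|g|}<\infty$, is the content of Lemma \ref{lem:Gurevich} and is proved there via Gurevich entropy, positive recurrence and the Vere--Jones classification, while the identity $\sum_g e^{-h|g|}=1$ is imported separately from Corollary \ref{cor:top_entropy} (sofic approximation). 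You instead exploit that $\psi$ depends only on the zeroth $\cG$-coordinate: the relative-entropy estimate $h_\nu(S)-h\int r\,d\nu\le H_\nu(\omega_0)-h\int r\,d\nu\le\log Q$, $Q=\sum_g e^{-h|g|}$, with equality precisely for the Bernoulli measure, identifies the equilibrium state directly, handles the two-sided shift natively (no Bowen-type reduction to a one-sided potential), and yields the finiteness of $c=\int r\,d\nu^*=\sum_g|g|e^{-h|g|}$ for free because $\nu^*$ is induced from a probability measure --- so your route bypasses both Lemma \ref{lem:Gurevich} and the Gibbs machinery. The trade-off is that the paper's machinery extends to the non-locally-constant potentials of Section \ref{sec:unieqsta}, whereas your argument is tied to this special potential.

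One step does need patching. You assert $\sum_g e^{-h|g|}=e^{P_\Sigma(\phi)}=1$, but your Gibbs inequality only gives $\sup_\nu\big(h_\nu(S)+\int\phi\,d\nu\big)\le\log Q$, and a priori $Q$ could exceed $1$ or even be infinite; in that case $\nu^*$ attains the value $0<\log Q$, the equality analysis never engages, and uniqueness is not established (note also that if $\sum_g|g|e^{-h|g|}=\infty$ the candidate Bernoulli measure is not admissible, so nothing need attain $\log Q$). To close this, test the Bernoulli measures supported on the finite sub-alphabets $\{g\in\cG:|g|\le L\}$ with weights proportional to $e^{-h|g|}$: they are admissible, their free energies equal $\log Q_L$ with $Q_L=\sum_{|g|\le L}e^{-h|g|}$, your Step 1 forces $\log Q_L\le 0$, and $Q_L\uparrow Q$ gives $Q\le 1$; combined with $Q\ge 1$ from the attained value $0$ you get $Q=1$, after which your equality case applies verbatim. (This is the same finite-alphabet approximation the paper runs through $P_{\rm top}(\psi,Y_m)$; alternatively one can cite the variational principle for countable shifts with one-coordinate potentials, and $Q\le 1$ also follows from the counting bound \eqref{eq:Guevich}.) Two minor points to record: $h_\nu(S)=H_\nu(\omega_0\mid\omega_{-1},\omega_{-2},\dots)$ needs $H_\nu(\omega_0)<\infty$, which holds for every admissible $\nu$ once $Q<\infty$ since $H_\nu(\omega_0)\le h\int r\,d\nu+\log Q$; and the equality $H_\nu(\omega_0\mid\text{past})=H_\nu(\omega_0)$ forces independence of all coordinates only after invoking stationarity.
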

It is important to note that Theorem B does not only establish the uniqueness of the measure of maximal entropy for a large class of shift spaces, it also provides an explicit formula for this measure.  In the context of symbolic dynamics such an explicit formula for the measure of maximal entropy has, to the best of our knowledge, previously only been known for subshifts of finite type, sofic shifts, S-gap shifts \cite[Theorem 3.22]{GRP}, and beta shifts \cite{Hof}. In the latter case the measure of maximal entropy is the lift of the Parry measure on a countable full shift.
As a consequence of Theorem B and results in \cite{BDWY} we obtain that the unique measure of maximal entropy of a beta shift is the  $\cG$-Bernoulli measure with respect to the natural generating set $\cG$.
 We refer to Example \ref{Example4} for more details.

 Furthemore, the techniques used in the proof of Theorem B can in some cases be applied to obtain an explicit formula for a measure of maximal entropy even when it is not unique. The idea is to use different adaptations of the generating set for each measure of maximal entropy of the coded shift $X$ and show that the said measure is $\cG$-Bernoulli for the new generating set. We showcase this approach on the Dyck shift in Example \ref{ex:Dyck}. The Dyck shift is the go-to example of a coded shift where the assumptions of the uniqueness theorems in \cite{Cl,Pavlov}, as well as Theorem B, fail: for its canonical generating set we have $X\subset X_{\rm lim}$ and $h_\con(X)<h_\res(X)$. Also, the Dyck shift does have two ergodic measures  of maximal entropy. Despite these pathological properties, we are able to show that both these measures are $\cG$-Bernoulli with respect to slightly modified generating sets.

We recall that for transitive subshifts of finite type and H\"older continuous potentials the unique equilibrium state is a Gibbs measure (see \eqref{defGibbs} for the definition) which is Bernoulli \cite{Bo}. It turns out that in the case of intrinsic ergodicity of a coded shift $X$, the measure of maximal entropy $\mu_{\rm max}$ in Theorem B does not, in general, have the Gibbs property (see Example \ref{ex:notGibbs}).

 As a consequence of Theorem B we obtain a formula for the topological entropy of a large class of coded shifts.
Namely, under the assumptions of Theorem
B we obtain that $h_\top(X)=\log \lambda_\ast$ where $\lambda_\ast$ is the unique solution of
the characteristic equation \begin{equation}\label{entcoded}
\sum_{g\in \cG} \lambda^{-|g|}=1,
\end{equation}
see Lemma \ref{lem:Gurevich}.
We note that \eqref{entcoded}  generalizes several papers in the literature with similar formulas for the topological entropy of certain classes of coded shift spaces, e.g., \cite{ADJS, BDWY, Cl2, Dillon, MS, Pavlov, Spandl}. The origin of \eqref{entcoded} can be traced back to the loop method for computing the entropy of a subshift of finite type developed by Peterson \cite{Petersen} in 1986, and, in particular, \eqref{entcoded} appears in \cite[Theorem 1.3]{Pavlov} under a special condition on the coded shift space.

One consequence of the entropy formula \eqref{entcoded}  is that $X$ is an almost-sofic shift, that is, there exists an increasing sequence of sofic shifts $X_m\subset X$ such that $\lim_{m\to\infty}h_{\rm top} (X_m)=h_{\rm top}(X)$. In fact, in the setting of Theorem B one can take $X_m=X_m(\cG(m))$ where $\cG(m)=\{g_1,\dots,g_m\}\subset \cG.$ We refer to \cite{Petersen} for more details about almost-sofic shifts.

It is a well-known fact that transitive sofic shifts satisfy the entropy-minimality property, i.e., every proper subshift must have strictly smaller topological entropy, see, e.g., \cite{LindMarcus}. We show  that this result does not generalize to almost sofic shifts. Namely, we construct in Example \ref{ex:hseq=hres} an almost sofic coded shift $X$ which contains two disjoint proper subshifts $Z_1$ and $Z_2$ with $h_{\rm top}(Z_1)=h_{\rm top}(Z_2)=h_{\rm top}(X)$.

Recall that $X_{\rm con}$ is not closed and in particular not a shift space. Therefore, Theorem B combined with the fact that $X_{\rm con}$ is not a subshift
implies the following:
\begin{corollary*}
     Suppose a coded shift $X$ has a measure of maximal entropy which assigns full measure to a subshift $Y\subset X$ with $Y\not=X$. Then $h_{\rm con}(X)\leq h_{\rm res}(X)$ for any generating set $\cG$ with $X=X(\cG)$, and which uniquely represents $X_{\rm con}(\cG)$.
\end{corollary*}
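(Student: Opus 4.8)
The plan is to argue by contradiction, using Theorem B to pin down the structure of the measure of maximal entropy and then deriving a contradiction from a support computation. Fix an arbitrary generating set $\cG$ with $X=X(\cG)$ that uniquely represents $X_{\rm seq}(\cG)$, and suppose toward a contradiction that $h_{\rm seq}(X)>h_{\rm res}(X)$. By Theorem B, $X$ then has a \emph{unique} measure of maximal entropy $\mu_{\max}$, which is $\cG$-Bernoulli with weights $p_g=\exp(-|g|h_{\rm top}(X))$. Since the given measure of maximal entropy $\mu$ satisfies $\mu(Y)=1$, and measures of maximal entropy are unique under the present hypothesis, we have $\mu=\mu_{\max}$ and therefore $\mu_{\max}(Y)=1$.

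Because $Y$ is a subshift, it is closed in $X$, so $\mu_{\max}(Y)=1$ yields $\operatorname{supp}(\mu_{\max})\subseteq Y$. The crux of the argument is to show instead that $\operatorname{supp}(\mu_{\max})=X$, which together with $Y\subsetneq X$ produces the contradiction $X=\operatorname{supp}(\mu_{\max})\subseteq Y\subsetneq X$. To establish full support I would exploit that every weight $p_g$ is strictly positive, so that each $\cG$-cylinder $\llb g_0\cdots g_k\rrb$ has positive $\mu_{\max}$-measure, namely $\tfrac{1}{c}\,p_{g_0}\cdots p_{g_k}>0$.

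The step I expect to be the main obstacle is passing from positivity on $\cG$-cylinders to full support in the standard Tychonoff topology, since $\cG$-cylinders are adapted to the finer $\cG$-topology of Section \ref{sec:G-topology} rather than to ordinary cylinders. Concretely, given $x\in X_{\rm seq}$ and a basic neighborhood $U$ of $x$ determined by agreement on a finite window $[-n,n]$, I would use the unique representation of $x$ as a concatenation of generators to select a finite block of consecutive generators whose concatenation covers that window; the corresponding (suitably shifted) $\cG$-cylinder is then contained in $U$ and has positive measure, so $\mu_{\max}(U)>0$. Hence $X_{\rm seq}\subseteq\operatorname{supp}(\mu_{\max})$, and since the support is closed and $X=\overline{X_{\rm seq}}$ by the definition of a coded shift, we conclude $\operatorname{supp}(\mu_{\max})=X$, completing the contradiction.
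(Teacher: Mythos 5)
Your proof is correct and follows essentially the same route as the paper: the paper obtains the corollary as the contrapositive of Theorem B combined with the fact that $X_{\rm seq}$ is not a subshift, which is exactly your argument once the support computation is made explicit. Your passage from positivity of the weights $p_g$ to full support of $\mu_{\max}$ --- placing a shifted $\cG$-cylinder of positive measure inside any ordinary cylinder around a point of $X_{\rm seq}$, in the spirit of Lemma \ref{sigma_lim}, and then using ${\rm supp}(\mu_{\max})\subseteq Y$ with $X=\overline{X_{\rm seq}}$ --- correctly supplies the one step the paper's terse justification leaves implicit.
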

\noindent We note that the corollary above 
is sharp. This follows from Example \ref{ex:hseq=hres}.

We also consider the uniqueness of equilibrium states of hyperbolic H\"older continuous potentials. In particular, we show that if the entropy $h(\cG)$ of the generating set $\cG$ satisfies
\[h(\cG)\eqdef \limsup_{n\to \infty}  \frac{1}{n}\log |\{g\in \cG: |g|=n\}|=0\]
and the pressure of a hyperbolic H\"older continuous potential $\phi$ on the concatenation set is greater than the pressure of $\phi$ on the residual set, then $\phi$ has a unique equilibrium state which is the lift of the invariant Gibbs measure on the associated induced system. We refer to Theorem
\ref{thm:uniqueeqsta} for more details.

As mentioned above, Theorem B and Theorem \ref{thm:uniqueeqsta} are not the first results about unique measures of maximal entropy and equilibrium states for coded shifts. Indeed, Climenhaga \cite{Cl}, Climenhaga and Thompson \cite{ClTh,ClTh2} and Pavlov \cite{Pavlov} obtained uniqueness results under the assumption that the topological entropy  (respectively pressure) of $X$ exceeds the entropy of the limit set $X_{\rm lim}$. From the measure theoretical perspective this means that the concatenation entropy (respectively pressure) is greater than that of the limit set.
The approach of the above works is to generalize the classical specification-based methods of Bowen by considering weaker versions of specification. 
 They successfully establish and study the unique measure of maximal entropy (respectively equilibrium state) in this case.
 
  A key difficulty for the applicability of the Climenhaga, Thompson, Pavlov \cite{Cl, ClTh,ClTh2,Pavlov} results is that  $X_{\rm con}$ and $X_{\rm lim}$ are in general not disjoint. In fact, for many classes of coded shift spaces the limit set contains the concatenation set and consequently the results in \cite{Pavlov,ClTh, ClTh2,Cl} do not apply to obtain results about the uniqueness of entropy maximizing measures or equilibrium states.
 We illustrate this in Theorem C below, which is a combination of Theorem \ref{thm:local_structure} and Corollary \ref{lem:ClPav} in Sections \ref{sec:flexibility} and \ref{sec:mme}, respectively. We show that for any coded shift space $X$ with $h_{\rm top}(X)>h_{\rm top}(X_{\rm lim})$, there are infinitely many coded shift spaces $\widetilde{X}$ "close" to $X$ such that   $\widetilde{X} \subset \widetilde X_{\lim} $ holds while Theorem B still applies to $\widetilde{X}$ and guarantees the uniqueness of the measure of maximal entropy. Consequently, Theorem  B significantly extends the results of Climenhaga, Thompson and Pavlov.
  \begin{thmc}
Let $X=X(\cG)$ be a coded shift such that
$\cG$ uniquely represents $X_{\con}$ and $h_{\rm top}(X)>h_{\rm top}(X_{\rm lim})$.
Then for every $\epsilon>0$ there exists a generating set $\widetilde{\cG}$ such that the coded shift $\widetilde X = X(\widetilde \cG)$ contains $X$, and has the following properties:
\begin{enumerate}
\item[(i)]   $\widetilde \cG$ uniquely represents $\widetilde X_{\con}$;
\item[(ii)] $\widetilde{X} \subset \widetilde X_{\lim} $;
\item[(iii)] $h_{\res}(X)=h_{\res}(\widetilde X)<h_\con(\widetilde X)$;
\item[(iv)] $h_{\rm top}(X)<h_{\top}(\widetilde X)<h_{\top}(X)+\epsilon$.
\end{enumerate}
\end{thmc}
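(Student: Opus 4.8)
The plan is to keep $X$ inside the regime of Theorem B throughout and to obtain $\widetilde X$ by adjoining long blocks to $\cG$. I first observe that the hypothesis already forces $h_\seq(X)=h_\top(X)>h_\res(X)$: indeed every $\mu\in\cM_\sigma(X)$ with $\mu(X_\res)=1$ satisfies $\mu(X_{\rm lim})=1$, so $h_\res(X)\le h_\top(X_{\rm lim})<h_\top(X)$. Thus Theorem B applies to $X$, and $h_\top(X)=\log\lambda_\ast$ where $\lambda_\ast$ is the unique root of $\sum_{g\in\cG}\lambda^{-|g|}=1$. This root is the quantity I will perturb, and keeping $\cG\subseteq\widetilde\cG$ will be essential: it guarantees $X\subseteq\widetilde X$ and, more importantly, $X_\seq\subseteq\widetilde X_\seq$, so that the $\cG$-Bernoulli measure of maximal entropy of $X$ remains a \emph{sequential} measure for $\widetilde X$ and is never counted toward $h_\res(\widetilde X)$.

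For the construction, fix a synchronizing marker word $s\in\cL(X)$ (its existence is where the structural hypothesis $h_\top(X)>h_\top(X_{\rm lim})$ enters). I set $\widetilde\cG=\cG\cup\cH$, where $\cH$ is a family of words of $\cL(X)$, each beginning and ending with $s$, selected along a rapidly increasing sequence of lengths $L_1<L_2<\cdots$ so that (a) every word of $\cL(X)$ occurs as a subword of some element of $\cH$, and (b) $\sum_{h\in\cH}\lambda_\ast^{-|h|}<\delta$ for a prescribed small $\delta$. The two requirements are compatible because one only needs generators of length $L_{k+1}$ to cover all $\cL(X)$-words of length $\le L_k$, and the corresponding weight $|\{h:|h|=L_{k+1}\}|\,\lambda_\ast^{-L_{k+1}}\approx\lambda_\ast^{L_k-L_{k+1}}$ is summable and arbitrarily small once the gaps $L_{k+1}-L_k$ grow. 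Requiring each new generator to begin and end with $s$ serves two purposes: it keeps all junction words inside $\cL(X)$, so that by (a) every junction word is itself a subword of some generator, which yields $\widetilde X\subseteq\widetilde X_{\rm lim}$ (property (ii)); and it renders the parsing of $\widetilde X_\seq$ into generators unique, giving (i).

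The entropy bookkeeping (iv) is then immediate from the characteristic equation. Adjoining $\cH$ raises the left-hand side of $\sum_{g}\lambda^{-|g|}=1$ at $\lambda=\lambda_\ast$ by exactly $\sum_{h\in\cH}\lambda_\ast^{-|h|}<\delta$, so the new root $\widetilde\lambda$ obeys $\lambda_\ast<\widetilde\lambda$ with $\widetilde\lambda\to\lambda_\ast$ as $\delta\to0$; choosing $\delta$ small gives $h_\top(X)<\log\widetilde\lambda<h_\top(X)+\epsilon$, and since $\widetilde X_\seq$ contains all $\widetilde\cG$-concatenations we also get $h_\seq(\widetilde X)\ge\log\widetilde\lambda$. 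Once the residual identity $h_\res(\widetilde X)=h_\res(X)$ is in hand, the chain $h_\res(\widetilde X)=h_\res(X)<h_\top(X)<\log\widetilde\lambda\le h_\seq(\widetilde X)$ forces $h_\top(\widetilde X)=h_\seq(\widetilde X)=\log\widetilde\lambda$, delivering (iii) and (iv) simultaneously, whereupon Theorem B applies to $\widetilde X$ and certifies the uniqueness advertised in the discussion preceding the statement.

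The main obstacle is precisely the residual identity $h_\res(\widetilde X)=h_\res(X)$, which must survive the fact that $\widetilde X_{\rm lim}$ has now swollen to all of $\widetilde X$, rendering the a priori bound $h_\res\le h_\top(\widetilde X_{\rm lim})$ useless. The point to establish is that an ergodic $\mu$ with $\mu(\widetilde X_\res)=1$ must assign zero frequency to every generator boundary, for otherwise $\mu$-almost every point is bi-infinitely parseable and hence sequential; and that, because the new generators in $\cH$ are long words of $\cL(X)$ delimited by $s$, such a measure is carried by points already residual for $X$, giving $h_\res(\widetilde X)\le h_\res(X)$ (the reverse inequality being clear from $X_\res\subseteq\widetilde X_\res$). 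This is exactly the content the local structure theorem (Theorem \ref{thm:local_structure}) is designed to supply, and reconciling it with the covering requirement (a), which pulls toward many long generators, against the weight bound (b), which pulls toward few, is where the technical heart of the argument lies.
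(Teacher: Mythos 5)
Your proposal is correct at its two ends --- the opening reduction (via Pavlov's Lemma 4.1, $h_{\res}(X)\leq h(X_{\rm lim})<h_{\seq}(X)=h_{\top}(X)$, which is exactly Corollary \ref{lem:ClPav}) and the entropy bookkeeping via the characteristic equation (which matches the paper's use of $\widetilde f(\lambda)=f(\lambda)+\sum_i\lambda^{-m_i}$ and Corollary \ref{cor:top_entropy}) --- but it has a genuine gap in the middle, and the gap is twofold. First, circularity: you concede that the residual identity $h_{\res}(\widetilde X)=h_{\res}(X)$ is ``the technical heart'' and then write that it ``is exactly the content the local structure theorem (Theorem \ref{thm:local_structure}) is designed to supply.'' But Theorem C \emph{is} Theorem \ref{thm:local_structure} combined with Corollary \ref{lem:ClPav}; you cannot invoke it to prove itself. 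The paper proves this step from scratch via a multi-case claim (every ergodic $\mu$ with $\mu(\widetilde X_{\res}\setminus X_{\res})=1$ is $\mu_{p_u}$ or $\mu_{p_v}$): one-sided infinite $u^\infty$/$v^\infty$ tails and finitely-many-seam configurations are killed by shift-invariance, and the set $Y_F(\infty)$ of points with infinite nested chains of new generators is killed by an empirical-measure estimate showing a $\mu$-generic point would have to generate the periodic measure $\mu_{p_v}$. Your frequency-of-boundaries heuristic does not replace this: positive frequency of marker occurrences does not make a point parseable (the blocks between consecutive markers must be \emph{exactly} generators), and with unbounded gaps the limit need not lie in $\widetilde X_{\seq}$.

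Second, your design choice $\cH\subset\cL(X)$ undermines both (i) and (ii). For (i): a word $h\in\cL(X)$ beginning and ending with $s$ may itself be a concatenation of $\cG$-generators (or overlap such concatenations), in which case points of $\widetilde X_{\seq}$ acquire two representations and unique representability fails; nothing in your marker device excludes this. The paper avoids the problem precisely by building the new generators $f_i=u^{2^i|w(i)|}w(i)v^{2^i|w(i)|}$ around words $u,v\notin\cL(X)$ of prime period with disjoint orbits, so every occurrence of $u$ or $v$ is an unambiguous delimiter. For (ii): your covering requirement (a) covers only $\cL(X)$, but $\widetilde X\subset\widetilde X_{\lim}$ demands that every word of $\cL(\widetilde X)$ --- including seam words spanning junctions $h_1h_2$, $gh$, $hg$, which generally lie in $\cL(\widetilde X)\setminus\cL(X)$ --- sit inside a single generator. (Indeed, your claim that junctions stay in $\cL(X)$ is already false at the first seam: $h_1h_2$ contains $ss$, and even a synchronizing $s$ does not give $ss\in\cL(X)$, nor is $gs$ admissible for arbitrary $g\in\cG$.) The paper handles this with a recursive construction --- $w(i)$ is a concatenation of $\cG_i$-generators containing \emph{all} words of length $i$ of $X(\cG_i)$, with $w(i)\subset w(i+1)$ --- and it is exactly this recursion that creates the nested chains making the $Y_F(\infty)$ case necessary; repairing your (ii) by the same recursion would reintroduce that hard case, which your argument does not address. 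Finally, the existence of a synchronizing word $s$ is asserted to follow from $h_{\top}(X)>h_{\top}(X_{\rm lim})$ but never argued (coded shifts need not be synchronized), whereas the paper's construction needs no such word.
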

 
We note that Theorem \ref{thm:local_structure} is in fact a more general result than Theorem C. In some sense this theorem can be considered as a local structure result for the concatenation and residual entropy of coded shift spaces.

The paper is organized as follows.  In Section 2 we review some
basic concepts from symbolic dynamics and  the thermodynamic formalism.
 Section 3 is devoted to the study of the concatenation and residual entropy for coded shift spaces. In particular, we present the proof of the flexibility result Theorem A and the local structure result Theorem \ref{thm:local_structure}. In Section 4, we introduce and study $\cG$-cylinders and so-called $\cG$-Bernoulli measures for coded shift spaces. We also show that both the concatenation set and the residual set are Borel. The proof of  Theorem B concerning the uniqueness of the measure of maximal entropy for coded shift spaces is presented in Section 5. In addition, several examples are constructed which emphasize various aspects of our results. Lastly, in Section 6 we derive results for the uniqueness of equilibrium states for hyperbolic H\"older continuous potentials.

\section {Preliminaries}\label{sec:prelim}
We continue to use the notation from Section 1. We consider shift spaces over a (fixed) finite alphabet $\cA$.
Let $\cA^*=\bigcup_{n\in\bN_0}\cA^n$ be the set of all finite words over $\cA$. The length of the word $w\in\cA^*$ is denoted by $|w|$, whereas the empty word has length zero. Given $u,v\in \cA^*$ we write $uv$ for the concatenation of the words $u$ and $v$. For $x=(x_i)_{i\in\bZ}\in\cA^{\bZ}$ and integers $n<m$ we use the notation $x[n,m]$ for the word $x_n\dots x_m$ and $x[n,m)$ for the word $x_n...x_{m-1}$. With a slight abuse of notation we also allow $n=-\infty$ and $m=\infty$, i.e., we consider the rays
$x(-\infty,m], x(-\infty,m), x[n,\infty]$ and $x(n,\infty)$.
The cylinder of the word $w\in\cA^*$ is the subset of $\cA^{\bZ}$ given by $[w]=\{x\in\cA^\bZ: x[0,|w|)=w\}$.

We briefly review some relevant definitions from the thermodynamic formalism, see \cite{MU,Wal:81} for detailed accounts.
Let $X$ be a shift space with shift map $\sigma:X\to X$. 
Endowing $X$ with the {\em Tychonov product topology} makes $X$ into a compact  and metrizable topological space. In fact, the (standard) metric given by
\begin{equation}\label{defmetX}
d(x,y)\eqdef  2^{-\min\{|k| \;:\;  x_k\neq y_k\}} 
\end{equation}
induces the Tychonov product topology on $X$.
We denote  the set of $X$-admissible words of length $n$ by $\cL_n(X)$.  We  call $\cL(X)\eqdef \bigcup_{n=0}^\infty \cL_n(X)$ the {\em language of $X$}. 
Given $\mu\in \cM_\sigma(X)$,  the \emph{measure-theoretic entropy} of $\mu$  is defined by
 \begin{equation}\label{eqn:def:meas_ent}
 h_\sigma(\mu)=\lim_{n\to\infty}- \frac{1}{n}\sum_{w\in \cL(X,n)} \mu([w])\log (\mu([w])),
 \end{equation}
 omitting terms with  $\mu([w])=0$.  
 Let $\phi\in C(X,\bR)$.  For $n\geq 1$, we define the {\em $n$-th partition function} $Z_n(\phi)$ at $\phi$ by
\[
Z_n(\phi)=\sum_{w\in \cL_n(X)}\exp\left( \sup_{x\in [w]} S_n\phi (x) \right), 
\]
where  
\[
S_n \phi(x) \eqdef \sum_{k=0}^{n-1} \phi(\sigma^k(x)).
\]
We observe that the sequence $\left(\log Z_n(\phi)\right)_{n\geq 1}$ is subadditive, see, e.g., \cite{MU}. The \emph{topological pressure} of $\phi$ with respect to the shift space $X$ is defined by 
\begin{equation} \label{eqn:def:P}
P_{\rm top}(X,\phi)= \lim_{n\to \infty} \frac{1}{n} \log Z_n(\phi)=\inf \left\{\frac{1}{n} \log Z_n(\phi): n\geq 1\right\}.
\end{equation}
Moreover, $h_{\rm top}(X)=P_{\rm top} (X,0)$ denotes the {\em topological entropy} of $\sigma$.
We sometimes consider the topological pressure simultaneously on different shift spaces $X$, i.e., $X$ may be a (one-sided or two-sided) shift space over a finite or countable infinite alphabet.  However, when it is clear from the context we suppress $X$ in the notation of the pressure and write $P_{\rm top}(\phi)$ for $P_{\rm top}(X,\phi)$.

 Let $\phi\in C(X,\bR)$. Recall that a Borel probability measure $m$ on $X$ is a Gibbs measure for $\phi$  if there exist constants $M>1$ and $P_m$ such that 
for all $n\in \bN$, $w\in\cL_n(X)$, and  $x\in [w]$ 
\begin{equation}\label{defGibbs} M^{-1} \leq \frac{m([w])}{\exp(S_n\phi(x)-n P_m)}\leq M.\end{equation}
Moreover, for every Gibbs measure $m$ of $\phi$, $P_m=P_{\rm top}(\phi)$. In case the potential $\phi$ is clear from the context we simply say that $m$ is Gibbs. If additionally a Gibbs measure  is  shift-invariant, we say it is an invariant Gibbs measure.
We refer to \cite{Bo} and \cite{MU} for more details.

In this paper we also make use of the measure-theoretic entropy, the topological pressure and Gibbs measures for the one-sided full shift $(Y^+,\sigma)$ and two-sided full shift $(Y,\sigma)$ over a countable infinite alphabet, i.e., $Y^+=\bN^{\bN_0}$ and $Y=\bN^\bZ$, respectively. We note that in the countable infinite alphabet case the definitions for the measure-theoretic entropy, topological pressure and Gibbs measures are identical to the definitions \eqref{eqn:def:meas_ent}, \eqref{eqn:def:P} and \eqref{defGibbs}, see \cite{MU} for details.
We note that in the countable infinite alphabet case the quantities in \eqref{eqn:def:meas_ent} and \eqref{eqn:def:P} can be infinite. In particular, the infimum in \eqref{eqn:def:P} may be taken over a set of infinities which is infinity.

\section {Flexibility of the Residual Set}\label{sec:flexibility}
We fix an arbitrary nonempty set of finite words $\cG\subset\cA^*$. The \emph{coded shift with the generating set} $\cG$ is the smallest subshift of $\cA^{\bZ}$ which contains all bi-infinite concatenations of the elements of $\cG$. We denote this subshift by $X(\cG)$. Then $X(\cG)$ can be naturally decomposed into two disjoint subsets: the \emph{concatenation set} $X_{\rm con}(\cG)$ consisting of the concatenations of the words from $\cG$ and the \emph{residual set} $X_{\rm res}(\cG)$ consisting of the points added to $X_{\rm con}(\cG)$ under the closure operation. More precisely, we define
\begin{align*}
  X_{\rm con}(\cG) & =\{x\in\cA^{\bZ}: \exists (k_i)_{i\in\bZ}\subset \bZ, k_i<k_{i+1}: x[k_i,k_{i+1})\in\cG   \} \\
  X_{\rm res}(\cG) &= X(\cG)\setminus X_{\rm con}(\cG).
\end{align*}
We say that a generating set $\cG$ uniquely represents $X_{\con}$ if for each $x\in X_{\rm con}(\cG)$ there is only one set of integers $\{k_i: i\in\bZ\}$ satisfying the above property. 
 It was shown in \cite{BPR} that we can always find a 
 generating set $\cG'$ so that $X(\cG')=X(\cG)$ and $\cG'$ uniquely represents $X_{\con}$. Therefore, we always assume that our generating set $\cG$ uniquely represents the concatenation set of the coded shift $X(\cG)$. 
 One might think about the set $\cG$ as a ``countable alphabet" for $X_{\con}(\cG)$. We use some of the analogous notations, e.g., for $W\subset\cG$ we write $W^*$ for the set of all finite concatenations of the elements in $W$. Since the generating set $\cG$ is fixed, we frequently omit it in our notation and write $X$ for $X(\cG)$, $X_{\rm con}$ for $X_{\rm con}(\cG)$, $X_{\rm res}$ for $X_{\rm res}(\cG)$, etc.

Another important subset of $X(\cG)$, which was considered by Climenhaga and Pavlov \cite{Cl, Pavlov}, is the \emph{limit set} defined by
\begin{equation}\label{def:X_lim}
  X_{\rm lim}(\cG) =\{x\in\cA^{\bZ}: \text{for any }k,\,x[-k,k]\text{ is a subword of some word in } \cG \}.
\end{equation}
Pavlov proved in \cite{Pavlov} that $\mu(X_{\rm con}\cup X_{\rm lim})=1$ for all $\mu\in \cM_\sigma(X)$, and hence the concatenation set and the limit set determine the measure-theoretic dynamics on $X$. In fact, there are a number of results (e.g., \cite{Cl,ClTh,Pavlov}) establishing various properties (including uniqueness) of measures of maximal entropy as well as equilibrium states for H\"{o}lder potentials which rely on the strict inequality between the entropy respectively pressure of $X$ and $X_{\rm lim}$. However, the set $X_{\rm lim}$ might be quite large in general, and even coincide with the set $X$ itself. This is, for example, the case for the Dyck shift (see Example \ref{ex:Dyck}), which was noted in \cite{Pavlov}. The main motivation of the present work is to derive sufficient conditions for the uniqueness of measures of maximal entropy and equilibrium states based on the disjoint partition of the coded shift $X$ into its concatenation and residual sets.

It was shown in \cite{BDWY} that in the case when the concatenation entropy is larger than the residual one  we can compute the entropy of $X$ as a limit of entropies of certain sofic subshifts. Precisely, consider any coded shift  $X(\cG)$ with an infinite generating set $\cG=\{g_1,...,g_m,\cdots\}$ which uniquely represents $X_{\con}$.
Let $X_m$ be coded shifts with generators $\{g_1,...,g_m\}$.
Note that each $X_m$ is a sofic shift since it has a finite generating set. If $h_{\rm con}(X)>h_{\rm res}(X)$ then $\lim\limits_{m\to\infty} h_{\rm top}(X_m)=h_{\rm top}(X)$. In contrast to this result we present a construction of a coded shift $X$ which has any given transitive subshift as its residual set. Moreover, the entropies of $X_m$ can be made uniformly bounded above by any given positive number, no matter how small. In fact, we prove that this number also bounds the concatenation entropy of $X$, hence providing a family of examples where a coded shift $X$ satisfies $h_{\rm con}(X)\ll h_{\rm res}(X)$.

\begin{theorem}\label{thm:construstion}
  Suppose $Z$ is an irreducible subshift on a finite alphabet $\cA$ which is not a full shift. Let $\epsilon>0$ be given. Then there exists a generating set $\cG$ on the same alphabet $\cA$ such that for $X=X(\cG)$ the following holds:
  \begin{enumerate}
    \item[(i)] $\cG$  uniquely represents $X_{\rm con}$;
    \item[(ii)] $Z\subset X_{\rm res}$, and if $\mu\in\cM_\sigma(X)$ with $\mu(X_\res)=1$ then $\mu(Z)=1$;
    \item[(iii)] $h_\con(X)<\epsilon$ 
  \end{enumerate}
\end{theorem}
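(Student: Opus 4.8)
The plan is to realize the given $Z$ as a measure-theoretically full part of the residual set by using generators of the form $g_m=F\,U_m$, where $F$ is a fixed \emph{marker word forbidden in $Z$} and $U_m\in\cL(Z)$ is a long admissible word of $Z$ that contains every admissible $Z$-word of length at most $m$ as a subword. The marker guarantees that no concatenation of generators can lie in $Z$ (so the copies of $Z$ we create get pushed into $X_\res$), while the words $U_m$ force longer and longer stretches of $\cL(Z)$ into the closure; making the generators long and sparse will keep the sequential entropy below $\epsilon$.

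First I would fix the marker. Since $Z$ is not the full shift it has a forbidden word, and a routine modification produces a word $F\notin\cL(Z)$ that is \emph{unbordered} (no nonempty proper prefix equals a suffix). The point of unborderedness is the following synchronization property, which I would record as a lemma: in any concatenation of generators $g_m=FU_m$, the word $F$ occurs \emph{exactly} at the generator starts. Indeed, $F$ cannot lie inside a body (bodies are in $\cL(Z)$ while $F$ is not), and an occurrence straddling a junction would exhibit a nonempty proper border of $F$ (or would place $F$ inside a body), a contradiction. This immediately yields (i): the occurrences of $F$ mark the cut points, so the decomposition of any $x\in X_\seq$ into generators is unique. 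Next I would build the bodies: by irreducibility of $Z$ one can splice, through connecting words, all $Z$-words of length $\le m$ into a single admissible word, and then pad it (using powers of the period word $p$, available since $Z$ has a periodic point) to any prescribed large length $\ell_m-|F|$. Choosing the lengths $\ell_m$ to grow fast enough that $\sum_m e^{-\epsilon\ell_m}<1$, the characteristic root $\lambda_\ast$ of $\sum_m\lambda^{-\ell_m}=1$ satisfies $\lambda_\ast<e^{\epsilon}$; since the exponential growth rate of the number of length-$n$ subwords of concatenations equals $\log\lambda_\ast$, and this bounds $h_\sigma(\mu)$ for every $\mu$ with $\mu(X_\seq)=1$, we obtain (iii). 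For the first half of (ii), each window $z[-k,k]$ of a point $z\in Z$ is a subword of some $U_m$, hence appears in the periodic concatenation $g_m^{\infty}\in X_\seq$; shifting to center it shows $z\in\overline{X_\seq}=X$, while $z$ contains no $F$ and so $z\notin X_\seq$, giving $Z\subset X_\res$.

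The substantive part is the second half of (ii). The key structural observation — again via the synchronization lemma, applied to a concatenation realizing a given finite window — is that in \emph{any} $x\in X$ the maximal $F$-free block between two consecutive occurrences of $F$ is exactly one full body $U_m$. Consequently, if $x\in X$ has infinitely many occurrences of $F$ in \emph{both} directions, then all of its inter-marker blocks are bodies and $x$ is a bi-infinite concatenation, i.e.\ $x\in X_\seq$. Now let $\mu\in\cM_\sigma(X)$ with $\mu(X_\res)=1$, so $\mu(X_\seq)=0$. If $\mu([F])>0$, then by Poincar\'e recurrence (applied to $\sigma$ and to $\sigma^{-1}$) $\mu$-almost every point of $[F]$ has $F$ occurring infinitely often in both directions; by the previous sentence such points lie in $X_\seq$, contradicting $\mu(X_\seq)=0$. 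Hence $\mu([F])=0$, so $\mu(\{x:F\text{ occurs in }x\})=0$. Finally, a point of $X$ containing no occurrence of $F$ has, by the block structure, every finite window contained in a single body $U_m\in\cL(Z)$, hence lies in $Z$. Therefore $X\setminus Z\subset\{x:F\text{ occurs in }x\}$, whence $\mu(X\setminus Z)=0$ and $\mu(Z)=1$; note this argument is self-contained and does not even invoke Pavlov's theorem recalled above.

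\textbf{Main obstacle.} The only genuinely delicate point is the marker: everything hinges on the synchronization property that $F$ marks generator boundaries and nothing else, both inside $X_\seq$ (for (i) and for the block structure) and throughout the closure $X$ (for the recurrence argument in (ii)). Producing an unbordered forbidden word and verifying that it admits no accidental occurrences at junctions is the technical heart of the construction; once this is in place, the entropy estimate is a routine renewal/counting computation and the measure statement in (ii) follows from Poincar\'e recurrence as above.
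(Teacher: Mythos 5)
Your treatment of (i) and the two halves of (ii) is essentially sound, but there is a genuine gap at the one place you declare routine: the entropy bound (iii). Your claim that ``the exponential growth rate of the number of length-$n$ subwords of concatenations equals $\log\lambda_\ast$'' is false. Since $X$ is the closure of $X_{\rm seq}$, the set of length-$n$ subwords of concatenations is exactly $\cL(X,n)$, and its growth rate is $h_{\rm top}(X)\geq h_{\rm top}(Z)$: each body $U_m$ already contains \emph{every} $Z$-word of length at most $m$, so the language of the concatenations swallows the full language of $Z$. In the interesting regime $\epsilon<h_{\rm top}(Z)$ this growth rate strictly exceeds $\log\lambda_\ast$; worse, if your inference ``growth rate bounds $h_\sigma(\mu)$'' were valid as stated it would apply to \emph{all} invariant measures and yield $h_{\rm top}(X)<\epsilon$, contradicting $Z\subset X$. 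The inequality you actually need, $h_{\rm seq}(X)\leq\log\lambda_\ast$ where $\sum_m\lambda_\ast^{-\ell_m}=1$, is precisely where the difficulty of the whole theorem concentrates, because $X_{\rm seq}$ is not closed: one must bound the entropy of measures charging $X_{\rm seq}$, and naive word-counting cannot see the difference between $X_{\rm seq}$ and its closure. The paper isolates this as Lemma \ref{lem:entropy_formula}: it conjugates $X_{\rm seq}$ with the non-closed part $\mathring{Y}$ of a generalized $S$-gap ($S$-graph) shift, checks that every positive-entropy invariant measure on $Y$ lives on $\mathring{Y}$, and imports the spectral-radius entropy formula from \cite{Dillon}; an alternative repair is to induce on the marker set $E$ and use Abramov's formula as in Section \ref{sec:mme}. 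Either way a genuine lemma must be supplied, and you have also misplaced the ``main obstacle'': the marker synchronization you flag is the easy half (an unbordered forbidden word exists by a short explicit construction, e.g.\ $F=a^{|w|+1}wb^{|w|+1}$ for a forbidden word $w$ and distinct letters $a\neq b$, which exist since $Z$ is not the full shift).

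Where your argument is complete, it takes a genuinely different and in places cleaner route than the paper. The paper chooses a forbidden word $a$ \emph{all of whose proper subwords are admissible} in $Z$, builds nested generators $s_n$ so that $\cG\subset\cL(Z)$, deduces $Z=X_{\rm lim}$, and gets the second half of (ii) from Pavlov's Lemma 4.1 ($\mu(X_{\rm seq}\cup X_{\rm lim})=1$); your unbordered-marker synchronization plus Poincar\'e recurrence (applied to $\sigma$ and $\sigma^{-1}$ on $[F]$) reaches the same conclusion self-containedly, which is a real merit. Two small repairs there: a finite window of a point of $X$ containing no occurrence of $F$ may still straddle \emph{partial} markers at its ends (and proper subwords of your $F$ need not lie in $\cL(Z)$), so you must shrink the window by $2|F|$ before concluding it sits inside a single body; and the padding by powers of $p$ only realizes body lengths in an arithmetic progression, so the $\ell_m$ must be chosen from the achievable lengths rather than ``any prescribed'' value. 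Once Lemma \ref{lem:entropy_formula} (with $d=1$, as your generators have pairwise distinct lengths) is invoked, your condition $\sum_m e^{-\epsilon\ell_m}<1$ plays exactly the role of the paper's requirement $m_n>n/\epsilon$ and closes (iii).
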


To prove the theorem, we build the generating set $\cG$ in such a way that distinct generating words can not have the same length. We show that the concatenation entropy in this case is controlled by the sequence of the cardinalities of elements in $\cG$ and then vary these cardinalities to achieve the $\epsilon$-bound for the entropy.
It turns out that such statement holds for any coded shift when the number of the generating words of fixed length is uniformly bounded. We establish this fact in the next lemma by constructing an isomorphism between $X_{\rm con}$ and a certain generalized version of the $S$-gap shift on a large alphabet. Then we use properties of this shift to make assertions about the invariant measures on $X_{\rm con}$. As a nice by-product of this approach we obtain a formula for the concatenation entropy of a coded shift with bounded rate of growth of generating words.

The classical $S$-gap shift associated with a set $S$ of non-negative integers on the alphabet $\{0,1\}$ consists of strings where the number of 1's between any two nearest 0's must be an element of $S$. We note that the roles of 0 and 1 are reversed here compared to the convention in the literature, but it is more convenient for introducing the generalization we need. The entropy of the $S$-gap shift is $\log \lambda$ where $\lambda$ satisfies
\begin{equation}\label{eq:s-gap_entropy}
  \sum_{s\in S}\lambda^{-(s+1)}=1.
\end{equation}

Although the expression \eqref{eq:s-gap_entropy} for the entropy of the $S$-gap shift was widely used, for a period of several decades there was no reference to a proof. It appears as an exercise in the monograph of Lind and Marcus \cite{LindMarcus}, which would certainly discourage one from publishing such a result. There is an incomplete proof by Spandl \cite{Spandl} (fixed years later in \cite[Corollary 3.18]{GRP}) and two alternative proofs are presented in Climenhaga's blog \cite{Cl2}. 

For us, the $S$-gap shift is a coded shift with generating set $\{01^{s}: s\in S\}$. The generalization we need for our purpose is the shift $Y$ on the alphabet $\{0,...,d\}$ associated to sets $S_1,...,S_d$ of non-negative integers given by the generator $\{0i^{s_i}: s_i\in S_i,\, i=1,...,d\}$. We find its entropy by applying Pavlov's result for coded shifts \cite[Theorem 1.3]{Pavlov}, which, in particular, gives a nice illustration of the usefulness of the theory. 
It is easy to see that $Y_{\rm lim}$ consists of $d$ periodic points of the form $(...iii...)$, so its entropy is zero. Hence, $h_{\rm top}(Y)$ satisfies $\sum_{n=1}^{\infty}c(n)e^{-nh_{\rm top}(Y)}=1$ where $c(n)$ is the number of words in the generator of $Y$ of length $n$. We can rewrite this equation in the spirit of (\ref{eq:s-gap_entropy}), so that $h_{\rm top}(Y)=\log \lambda$ where $\lambda$ is the unique solution of 
\begin{equation}\label{eq:multy-gap_entropy}
  \sum_{s\in S_1}\lambda^{-(s+1)}+\cdots +\sum_{s\in S_d}\lambda^{-(s+1)}=1.
\end{equation}

\begin{lemma}\label{lem:entropy_formula}
Let $X$ be a coded shift with generator $\cG$ which uniquely represents $X_{\con}$. Suppose $\cG$ has a bounded rate of growth, i.e. there is an integer $d$ such that ${\rm card}\{g\in\cG: |g|=n\}\le d$ for all $n\in\bN$. Then $h_{\rm con}(X)=\log\lambda$, where $\lambda$ is the solution of $$\sum_{g\in\cG}\lambda^{-|g|}=1.$$
\end{lemma}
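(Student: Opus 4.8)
The plan is to realize $X_{\seq}$, up to a shift-equivariant Borel isomorphism, as the sequential set of a generalized $(S_1,\dots,S_d)$-gap shift, and then to read off its entropy from \eqref{eq:multy-gap_entropy}. For each $n$ write $c_n=\mathrm{card}\{g\in\cG:|g|=n\}\le d$ and fix an enumeration $g_{n,1},\dots,g_{n,c_n}$ of the generators of length $n$. Setting $S_i=\{n-1:c_n\ge i\}$ for $i=1,\dots,d$, I let $Y=X(\cH)$ be the gap shift on $\{0,1,\dots,d\}$ with $\cH=\{0\,i^{\,s}:s\in S_i,\ i=1,\dots,d\}$. Then $g_{n,i}\mapsto 0\,i^{\,n-1}$ is a length-preserving bijection $\cG\to\cH$ (since $|g_{n,i}|=n=|0\,i^{\,n-1}|$), and $\cH$ uniquely represents $Y_{\seq}$ because the symbol $0$ occurs exactly once in each generator, so cut points are detected as occurrences of $0$.

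Next I would lift the bijection on generators to a map $\Phi:X_{\seq}\to Y_{\seq}$: for $x\in X_{\seq}$ with its unique cut points $(k_i)$, let $\Phi(x)$ have the same cut points and $i$-th block equal to the $\cH$-image of $x[k_i,k_{i+1})$. Length-preservation makes these cut points admissible for $\Phi(x)$, so $\Phi$ is a bijection onto $Y_{\seq}$. I would then check $\Phi\sigma=\sigma\Phi$ — shifting $x$ translates its cut points by one and leaves the block sequence unchanged, and equal block lengths force the same translation on the image side — and that $\Phi$ is Borel (the parsing is Borel on the Borel set $X_{\seq}$). Hence for each $\mu\in\cM_\sigma(X)$ with $\mu(X_{\seq})=1$, the map $\Phi$ is a measure-theoretic isomorphism of $(X,\mu,\sigma)$ onto $(Y,\Phi_*\mu,\sigma)$ with $(\Phi_*\mu)(Y_{\seq})=1$; since measure-theoretic entropy is an isomorphism invariant, $h_\sigma(\mu)=h_\sigma(\Phi_*\mu)$. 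Running the same argument for $\Phi^{-1}$ gives $h_{\seq}(X)=h_{\seq}(Y)$.

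To finish I would compute $h_{\seq}(Y)$. Every point of the limit set $Y_{\lim}$ has all of its subwords inside some generator $0\,i^{\,s}$, which forces it to be constant; thus $Y_{\lim}$ consists of the finitely many fixed points $\bar i$ and has zero topological entropy. By Pavlov's theorem any $\mu$ with $\mu(Y_{\res})=1$ satisfies $\mu(Y_{\lim})=1$, so $h_{\res}(Y)=0$ and therefore $h_{\seq}(Y)=h_{\top}(Y)$. The $S$-graph shift framework of \cite{Dillon} evaluates $h_{\top}(Y)=\log\lambda$ with $\lambda$ the solution of \eqref{eq:multy-gap_entropy}; and since $(i,s)\mapsto g_{s+1,i}$ is a length-preserving bijection between $\{(i,s):s\in S_i\}$ and $\cG$, we get $\sum_{i=1}^d\sum_{s\in S_i}\lambda^{-(s+1)}=\sum_{g\in\cG}\lambda^{-|g|}$, so \eqref{eq:multy-gap_entropy} reads $\sum_{g\in\cG}\lambda^{-|g|}=1$. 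Combining, $h_{\seq}(X)=\log\lambda$, as claimed.

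The main obstacle is the second step: because $X_{\seq}$ is not closed there is no ambient sliding-block code, and $\Phi$ is only a Borel (not continuous) map, so I must derive shift-equivariance and the isomorphism property directly from unique representation and length-preservation rather than from a topological conjugacy. Once $\Phi$ is recognized as a shift-equivariant Borel bijection, invariance of Kolmogorov--Sinai entropy does the rest; establishing those two structural facts cleanly is the crux of the proof.
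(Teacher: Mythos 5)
Your overall strategy is the same as the paper's: encode each generator $g_{n,i}$ of length $n\ge 2$ as $0\,i^{\,n-1}$, identify $X_{\seq}$ with the sequential set of a generalized gap shift (an $S$-graph shift in the sense of \cite{Dillon}), transfer measures through the resulting shift-equivariant bijection, and read off the entropy from \eqref{eq:multy-gap_entropy}. Your treatment of the residual set of $Y$ (via $Y_{\lim}$ consisting of the constant sequences $\bar\imath$ and Pavlov's lemma) is a harmless variant of the paper's direct argument that ergodic measures charging $Y\setminus \mathring Y$ sit on the fixed points, and your Borel-isomorphism step matches the paper's map $T$. For generating sets all of whose words have length at least two, your argument is essentially the paper's proof.

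However, there is a genuine gap: your claimed length-preserving bijection $\cG\to\cH$ fails when $\cG$ contains words of length one. For $n=1$ your recipe gives $g_{1,i}\mapsto 0\,i^{\,0}=0$, so if $c_1\ge 2$ two distinct generators are sent to the \emph{same} word $0$, and the induced map $\Phi$ is not injective --- it literally destroys the entropy carried by the choice among the length-one generators. Moreover $0\in S_i$ (a ``gap of size zero'') falls outside the $S$-graph framework you invoke: in \cite{Dillon} and in the paper the blocks $u^{s}$ have $s\in S_u\subset\bN$, and the determinant computation behind \eqref{eq:multy-gap_entropy} presupposes this. Since the lemma allows length-one generators, this case must be handled, and it is not a one-line fix: the paper devotes the entire second half of its proof to it, enlarging the alphabet to $\{0,\dots,d+r\}$ ($r$ the number of length-one words), encoding each length-one generator by its own fresh symbol $i\in\{d+1,\dots,d+r\}$, working out the correct edge set $E$ (a length-one word may follow any generator, but $0$ must be followed by $i\le d$), and computing a larger determinant, which yields $1-rx-x\sum_{i=1}^{d}\sum_{s\in S_i}x^{s}=0$ and only then the claimed equation $\sum_{g\in\cG}\lambda^{-|g|}=1$. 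You should either carry out this modified construction or explicitly reduce the length-one case away; as written, the proof only establishes the lemma under the extra hypothesis $|g|\ge 2$ for all $g\in\cG$.
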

\begin{proof}
   For $n\in\bN$ denote by $\cG(n)$ the set of all generating words of length $n$, $\cG(n)=\{g\in\cG : |g|=n\}$ and let $c(n)= {\rm card}\,\cG(n)$. By our assumptions these cardinalities are bounded by some constant $d$, so we adopt $d=\max\{c(n): n\in \bN\}$. We define the gap-size sets $S_i$ as 
   \begin{equation*}
     S_i =\{n\in\bN: c(n+1) \geq i\}\text{ for } i=1,...,d.
   \end{equation*}
       Suppose that the generating set of $X$ has $r$ words of length one. We construct an isomorphism between $X$ and a coded shift on the alphabet $\{0,...,d+r\}$ by identifying generating words of length $n\ge 2$ with $0(i)^{n-1}$ where $1\le i \le d$ and words of length 1 with $i$ where $d+1\le i\le d+r$. Hence, we consider the coded shift $Y$ generated by $$\{0i^{s_i}: s_i\in S_i,\, i=1,...,d\}\cup\{i:i=d+1,...,d+r\},$$ which is a slight modification of the generalized $S$-gap shift discussed above. Application of the Pavlov's result \cite[Theorem 1.3]{Pavlov} tells us that 
       \begin{equation}\label{eq:ent_Y}
         \sum_{n=1}^{\infty}c(n)e^{-nh_{\rm top}(Y)}=1.
       \end{equation}
       
   Note that the residual set $Y_\res$ consists of bi-infinite strings ending or beginning with an infinite
   string of a single symbol $i\in\{1,...,d\}$. Any ergodic shift-invariant measure $\mu$ with $\mu(Y_\res)\ne 0$ is supported on the periodic points of the form $(...iii...)$.  This can be shown by a standard argument of partitioning the set of points ending with a string of $i$ into subsets $Y_N=\big{\{}y\in Y: y[N,\infty)=0iii... \text{ for some }i\in \{1,...,d\}\big{\}}$. Since the sets $Y_N$ are disjoint and each $Y_N=\sigma^{-N}(Y_0)$, we see that $\mu(Y_N)=0$. Similarly, the set of points beginning with a string of $i$ must have $\mu$-measure zero as well. Therefore, for any invariant probability measure $\mu$ on $Y$ with $h_\sigma(\mu)>0$ we have $\mu(Y_\con)=1$.

   Next we show that the topological entropy of $Y$ equals the concatenation entropy of $X$ by establishing an isomorphism between $X_{\rm con}$ and $Y_\con$. We enumerate the words in $\cG(n)$ so that $\cG(n)=\{g_{n,1},...,g_{n,c(n)}\}$ and write $\tau(g_{n,i})=i$ for $n>1$ and $\tau(g_{1,i})=d+i$. Hence, for a generating word $g$ its index in the set $\cG(|g|)$ 
   is given by $\tau(g)$. To define the map $T:X_{\rm con}\to Y_\con$ we represent each element of $X_{\rm con}$ as the concatenation of the words from the generator and then replace each generating word of length 1 by $d+i$ 
   and each generating word of length greater than 1 by a block of the form $0i^{(n-1)}$, where $n$ is the length of the word and $i$ is its index in $\cG(n)$. Precisely, for every $x\in X_{\rm con}$ there is a sequence of integers $(k_j)_{j\in \bZ}$ such that $x[k_j,k_{j+1})\in \cG$ for all $j$. Let
    $T(x)=y$ be so, that for $j\in\bZ$ 
    $$y[k_j,k_{j+1})=\begin{cases}
                        \tau\big(x[k_j,k_{j+1})\big), & \mbox{if $k_{j+1}-k_j=1$} \\
                        0\tau\big(x[k_j,k_{j+1})\big)^{k_{j+1}-k_j-1}, & \mbox{otherwise}.
                     \end{cases} $$

    Since $\cG$ uniquely represents $X_{\con}$, the sequence $(k_j)$ is unambiguously determined by $x\in X_\con$ and the map $T$ is well defined. It follows that $T$ is one-to-one from the definition of the index map $\tau$ and that $T$ is onto from the construction of the gap-sets $S_i$. Moreover, $T(\sigma x)=\sigma T(x)$. Let $\mu_Y$ be a measure of maximal entropy of $Y$ and consider the pull-back measure $\mu_Y \circ T$. Then $\mu_Y(Y_\con)=1$
    and the systems $(X,\sigma,\mu_Y\circ T))$ and $(Y,\sigma,\mu_Y)$ are measure-theoretically isomorphic, hence the entropy is preserved. It follows from the fact that $\mu_Y\circ T(X_\con)=1$ that the concatenation entropy of $X$ is at least the topological entropy of $Y$.
    On the other hand, if there is an invariant measure $\nu$ on $X$ satisfying $\nu(X_\con)=1$ and $h_\sigma(\nu)>h_{\rm top}(Y)$ then its push-forward $\nu\circ T^{-1}$ would be an invariant measure on $Y$ with entropy exceeding $h_{\rm top}(Y)$, which is a contradiction. We conclude that $h_\con(X)=h_{\rm top}(Y)$.

   We now apply formula (\ref{eq:ent_Y}) for the entropy of $Y$ and conclude that $h_\con(X)=\log\lambda$ where $\lambda$ must satisfy
   $$\sum_{g\in\cG}\lambda^{-|g|}=1.$$
   \end{proof}


Suppose a coded shift $X$ satisfies the assumptions of Lemma \ref{lem:entropy_formula} and has an infinite generating set $\cG$. We enumerate the elements of $\cG$, so that $\cG=\{g_i: i\in \bN\}$ and consider coded shifts $X_m$ which are generated by $\{g_1,...,g_m\}$. Since no elements have to be added in the closure of the concatenation part of $X_m$ these shifts are often easier to control than $X$. It follows from the previous lemma that the topological entropy of $X_m$ and the concatenation entropy of $X$ are given by $\log \lambda_m$ and $\log \lambda_\ast$ respectively, where $\lambda_m$ and $\lambda_\ast$ satisfy

$$\sum_{i=1}^{m}\lambda_m^{-|g_i|}=1\text{ and } \sum_{i=1}^{\infty}\lambda_\ast^{-|g_i|}=1.$$
We conclude that $(\lambda_m)_m$ is an increasing sequence with $\lim_{m\to\infty}\lambda_m=\lambda_\ast.$ Thus, we obtain the following:

\begin{corollary}\label{cor1}
  Let $X$ be a coded shift with generating set $\cG=\{g_i: i\in \bN\}$ which uniquely represents $X_\con$ and has a bounded rate of growth. Then $$\lim\limits_{m\to\infty} h_{\rm top}(X_m)=h_{\rm con}(X),$$ where $X_m$ are the coded shifts generated by $\{g_1,...,g_m\}$.
\end{corollary}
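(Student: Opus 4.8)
The plan is to reduce the statement to the explicit entropy formula of Lemma~\ref{lem:entropy_formula} and then to track how its root moves as the generating set is exhausted. First I would verify that the lemma applies not only to $\cG$ but to each finite truncation $\cG_m=\{g_1,\dots,g_m\}$: unique representability is inherited by $\cG_m$ from $\cG$, since every $\cG_m$-representation is in particular a $\cG$-representation and the latter is unique, and $\cG_m$ trivially has bounded rate of growth because it is finite. The lemma then yields $h_{\rm seq}(X_m)=\log\lambda_m$ and $h_{\rm seq}(X)=\log\lambda_\ast$, where $\lambda_m$ and $\lambda_\ast$ solve $\sum_{i=1}^m\lambda_m^{-|g_i|}=1$ and $\sum_{i=1}^\infty\lambda_\ast^{-|g_i|}=1$. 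The second series converges for $\lambda>1$ because the bounded-rate-of-growth hypothesis gives $\sum_{g\in\cG}\lambda^{-|g|}\le d\,\lambda^{-1}/(1-\lambda^{-1})$; hence $F(\lambda)\eqdef\sum_{g\in\cG}\lambda^{-|g|}$ is continuous and strictly decreasing from $+\infty$ to $0$ on $(1,\infty)$, so $\lambda_\ast$ is its unique root there, and similarly each $\lambda_m$ is determined as the root of the corresponding finite sum.

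The step that requires care is promoting $h_{\rm seq}(X_m)$ to the quantity $h_{\rm top}(X_m)$ appearing in the statement. Since $\cG_m$ is finite, no word longer than $L_m=\max_{i\le m}|g_i|$ can be a subword of any generator, so the limit set $X_{m,{\rm lim}}(\cG_m)$ of \eqref{def:X_lim} is empty. By Pavlov's result \cite{Pavlov}, every $\mu\in\cM_\sigma(X_m)$ satisfies $\mu(X_{m,\seq}\cup X_{m,{\rm lim}})=1$, and therefore $\mu(X_{m,\seq})=1$. The variational principle then gives $h_{\rm top}(X_m)=\sup_{\mu}h_\sigma(\mu)=h_{\rm seq}(X_m)=\log\lambda_m$. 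This is the conceptual core of the corollary: finiteness of $\cG_m$ (equivalently, soficity of $X_m$) forces the residual set to be measure-theoretically negligible, in sharp contrast with the infinite case where the residual set may carry all of the entropy (cf.\ Theorem~\ref{thm:construstion}).

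Finally I would establish $\lambda_m\to\lambda_\ast$. Writing $F_m(\lambda)=\sum_{i=1}^m\lambda^{-|g_i|}$, we have $F_m\le F_{m+1}\le F$ pointwise, with each function strictly decreasing; comparing values at the respective roots shows $(\lambda_m)$ is increasing and bounded above by $\lambda_\ast$, so $\lambda_m\to\lambda_\infty\le\lambda_\ast$ for some $\lambda_\infty$, and $\lambda_\infty>1$ since $\lambda_m>1$ for $m\ge 2$. For the reverse inequality, fix $N$; for every $m\ge N$ one has $F_N(\lambda_m)\le F_m(\lambda_m)=1$, and letting $m\to\infty$ gives $F_N(\lambda_\infty)\le 1$ by continuity of the finite sum $F_N$. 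Letting $N\to\infty$ then gives $F(\lambda_\infty)\le 1=F(\lambda_\ast)$, so $\lambda_\infty\ge\lambda_\ast$ by strict monotonicity of $F$, whence $\lambda_\infty=\lambda_\ast$. Taking logarithms and combining with the previous two paragraphs yields $\lim_{m\to\infty}h_{\rm top}(X_m)=\lim_{m\to\infty}\log\lambda_m=\log\lambda_\ast=h_{\rm seq}(X)$.

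I expect the only genuine obstacle to be the identification $h_{\rm top}(X_m)=h_{\rm seq}(X_m)$ in the second paragraph. The discussion preceding the corollary treats it as immediate, but it rests on the clean (yet non-vacuous) observation that a finite generating set has empty limit set, so that Pavlov's result collapses all invariant measures onto the sequential set. The remaining ingredients are bookkeeping for the applicability of Lemma~\ref{lem:entropy_formula} and an elementary monotone-convergence-of-roots argument, neither of which should present difficulty.
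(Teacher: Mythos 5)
Your proof is correct, and its skeleton is the same as the paper's: apply Lemma~\ref{lem:entropy_formula} to $X$ and to each truncation $\cG_m$, then show the roots $\lambda_m$ of the finite characteristic equations increase to the root $\lambda_\ast$ of the full equation. The one place where you genuinely diverge is the identification $h_{\rm top}(X_m)=h_{\rm seq}(X_m)$. The paper disposes of this topologically, with the remark that ``no elements have to be added in the closure of the sequential part of $X_m$'': for a finite generating set the cut points of any parsing have gaps bounded by $\max_{i\le m}|g_i|$, so a compactness/diagonal argument shows $X_{\rm seq}(\cG_m)$ is already closed, i.e.\ $X_{\rm res}(\cG_m)=\emptyset$. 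You instead observe that $X_{\rm lim}(\cG_m)=\emptyset$ (no word longer than $L_m$ is a subword of a generator) and invoke Pavlov's result that every invariant measure gives full mass to $X_{\rm seq}\cup X_{\rm lim}$, then conclude via the variational principle. Both are valid; the paper's route gives the stronger topological fact $X_m=X_{m,\rm seq}$, while yours is measure-theoretically sufficient, avoids the compactness argument entirely, and reuses a lemma the paper needs elsewhere anyway. You also supply two details the paper leaves implicit: that unique representability is inherited by $\cG_m$ (needed to apply Lemma~\ref{lem:entropy_formula} to $X_m$), and the monotone-convergence-of-roots argument $F_N(\lambda_m)\le F_m(\lambda_m)=1$ followed by $m\to\infty$, $N\to\infty$, which cleanly justifies the paper's bare assertion that $\lambda_m\uparrow\lambda_\ast$.
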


We are now ready to prove  Theorem \ref{thm:construstion}.

\begin{proof}[Proof of Theorem \ref{thm:construstion}]
We fix a word $a$ which is not  in $\cL(Z)$, but every proper subword of $a$ is in $\cL(Z)$.  Such a word exists and can be chosen to have length at least two since $Z$ is not a full shift.  Denote by $s$ the first symbol of $a$ and let $\tilde a$ be such that $s\tilde a=a$. Since $Z$ is irreducible, for each $n$ we can find a word $w_n\in\cL(Z)$ which contains all words in $\cL_n(Z)$ as subwords. Furthermore, we inductively select a sequence of words $(g_n)_{n\in\bN}$ in $\cL(Z)$ with the following properties:
\begin{itemize}
  \item[(i)] $g_n$ starts with $\tilde{a}$ and ends with $s$;
  \item[(ii)] $g_n$ contains $w_n$ as a subword;
  \item[(iii)] $|g_{n+1}|>|g_{n}|$.
\end{itemize}
We let $n_k=k\lceil \frac{1}{\epsilon}\rceil$, where $\lceil .\rceil$ is the ceiling function, and define the generating set $\cG=\{g_{n_k}: k\in \bN\}$. We note that every generator $g_{n_k}$ starts with the word $\tilde a$ and ends with the symbol $s$. Since $a=s\tilde a\notin\cL(Z)$ it follows that no bi-infinite concatenation of generators in $\cG$ belongs to $Z$. Hence, $X_{\rm con}(\cG)\cap Z=\emptyset$. On the other hand, for any $z\in Z$ and any $i\in \bN$,  $z[-i,i]$ is a subword of $g_{n_k}$ whenever $n_k> 2i$  and hence $z\in X(\cG)$.
It follows that $Z\subset X_{\rm res}(\cG)$. In fact, $Z\subset X_{\rm lim}(\cG)$, where $X_{\rm lim}(\cG)$ is defined in (\ref{def:X_lim}). Since $\cG\subset \cL(Z)$ the opposite inclusion holds as well, so that $Z=X_{\lim}$. Recall that for any $\mu\in\cM_\sigma(X)$ we have $\mu(X_{\con}\cup X_{\lim} )=1$. If $\mu(X_{\res})=1$ then $\mu(Z)=\mu(X_{\lim})=1$.

It follows from the construction that $\cG$ uniquely represents $X_{\con}$ since the appearances of the word $a$ within $x\in X_{\rm con}$ determine unambiguously the partition of $x$ into a sequence of words from $\cG$. 
To see this we observe that dependent on the structure of the word $a$ it is possible that there is a word $v$ in $x$ which is given by finitely many overlapping copies of $a$. However, by construction of the generators $g_n$ the transition between generators must in this case occur in the copy of $a$ on the right-hand side of the word $v$.
Hence we have established the assertions (i) and (ii).

To verify (iii) we note that the generating words of $X=X(\cG)$ have different lengths we can apply Lemma \ref{lem:entropy_formula} with $d=1$. We obtain that $h_\con(X)=\log \lambda_*$ where $\lambda_*$ satisfies the equation
$$\sum_{k=1}^{\infty}\lambda^{-|g_{n_k}|}=1.$$
To complete the proof we show that any number $\lambda$ for which $\log\lambda\ge \epsilon$ cannot be a solution to the above equation. Recall from our construction that $g_{n_k}$ includes a subword $w_{n_k}$ and hence $|g_{n_k}|>n_k\geq k/\epsilon$ by the choice of the sequence $(n_k)$.
Then $\log\lambda\ge\epsilon$ implies that $\lambda^{-|g_{n_k}|}< e^{-k}$, 
so that $\sum_{i=1}^{\infty}\lambda^{-|g_i|}<1.$ We conclude that $h_{\rm con}(X)=\log\lambda_*<\epsilon$, which completes the proof of the theorem.
\end{proof}

We now present a local structure result for the concatenation and residual entropy of coded shift spaces.
\begin{theorem}\label{thm:local_structure} 
Let $X=X(\cG)$ be a coded shift such that
$\cG$ uniquely represents $X_{\con}$ and $h_{\con}(X)>h_{\res}(X)$.
Then for every $\epsilon>0$ there exists a generating set $\widetilde{\cG}=\cG\cup \{f_i: i\in\bN\}$, where the $f_i$'s are finite words with $|f_i|<|f_{i+1}|$ for $i\geq 1$, such that the coded shift $\widetilde X = X(\widetilde \cG)$ has the following properties:
\begin{enumerate}
\item[(i)]   $\widetilde \cG$ uniquely represents $\widetilde X_{\con}$;
\item[(ii)] $h_{\res}(X)=h_{\res}(\widetilde X)$;
\item[(iii)] $h_{\rm top}(X)<h(\widetilde X_{\rm con})=h_{\top}(\widetilde X)<h_{\top}(X)+\epsilon$;
\item[(iv)] $\widetilde{X} \subset \widetilde X_{\lim} $.
\end{enumerate}
Moreover, it can be arranged for any given sequence $(m_i)_{i\in \bN}$ of positive integers  that $|f_i|\geq m_i$.
\end{theorem}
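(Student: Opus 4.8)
The plan is to keep the original generators untouched and adjoin an explicit family $\{f_i\}$ of very long, heavily padded, content-rich words, so that the enlarged shift becomes equal to its own limit set while the extra entropy created is arbitrarily small and the residual behaviour is unchanged.

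First I would dispose of the entropy bookkeeping, which is the routine part. Once (i) and (ii) are in hand, monotonicity of the sequential entropy under enlarging the generating set (every $\cG$-concatenation is a $\widetilde\cG$-concatenation, so $X_\seq\subseteq\widetilde X_\seq$) gives $h_\seq(\widetilde X)\ge h_\seq(X)=h_\top(X)>h_\res(X)=h_\res(\widetilde X)$. Thus $\widetilde\cG$ meets the hypotheses of Corollary~\ref{cor:top_entropy}, whence $h_\top(\widetilde X)=h_\seq(\widetilde X)=\log\widetilde\lambda_\ast$ with $\sum_{g\in\widetilde\cG}\widetilde\lambda_\ast^{-|g|}=1$; this already supplies the equality $h(\widetilde X_\seq)=h_\top(\widetilde X)$ in (iii). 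Writing $\lambda_\ast=e^{h_\top(X)}$ and $\mu_\epsilon=\lambda_\ast e^{\epsilon}$, one has $\sum_{g\in\cG}\mu_\epsilon^{-|g|}=1-\delta$ for some $\delta>0$; choosing the lengths $|f_i|$ strictly increasing, at least $m_i$, and large enough that $\sum_i\mu_\epsilon^{-|f_i|}<\delta$ forces $\lambda_\ast<\widetilde\lambda_\ast<\mu_\epsilon$, which is exactly (iii) together with the final ``moreover'' clause.

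For the construction I would fix a generator $p=g_0\in\cG$ and build the $f_i$ inductively. With $\widetilde X_{i-1}=X(\cG\cup\{f_1,\dots,f_{i-1}\})$, set $f_i=p^{N_i}\,C_i\,p^{N_i}$, where $C_i$ is a single word, spliced together from generators of $\widetilde X_{i-1}$ by transitivity of the coded shift, that contains \emph{every} admissible word of $\widetilde X_{i-1}$ of length at most $i$ as a subword, augmented by a small modification that makes $f_i$ a genuinely new word; the integers $N_i$ are chosen to grow much faster than $|C_i|$ and than $i$. Property (iv) then follows by inductive exhaustion: any $w\in\cL(\widetilde X)$ occurs in some $\widetilde X_M$, so for every $i\ge\max\{M,|w|\}$ the word $w$ lies in $\cL(\widetilde X_{i-1})$ and has length $\le i$, hence is a subword of $C_i\subseteq f_i$; therefore $\widetilde X=\widetilde X_{\lim}$. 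The point of the two-sided padding is to control the residual set, which is the substance of (ii): because $N_i\to\infty$ dominates all content lengths, every ray arising as a limit of longer and longer generator prefixes (resp. suffixes) must equal $p^{\infty}$, so the ``infinite generator'' of any point of $\widetilde X_\res$ is either an old $\cG$-generator or a rigid $p$-tail. Running the wandering-set argument of Lemma~\ref{lem:entropy_formula} (the disjoint sets $Y_N=\sigma^{-N}Y_0$) on the positions of these rigid tails, together with a dichotomy according to whether the new pattern occurs with zero or positive frequency, should confine the contribution of the $f_i$ to the residual set to entropy-free rigid orbits, giving $h_\res(\widetilde X)\le h_\res(X)$; the reverse inequality is immediate since a point of $X_\res$ that avoids the new pattern cannot acquire a $\widetilde\cG$-parse, so $X_\res\subseteq\widetilde X_\res$.

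The genuinely hard part is (i), the unique representation of $\widetilde X_\seq$, precisely because it pulls against (iv). Decipherability cannot be enforced by a forbidden marker word: since (iv) compels each generator to contain every finite pattern — in particular every candidate marker and arbitrarily long internal runs of $p$ — two consecutive short blocks produce patterns carrying several markers that no single generator could contain, so a marker-based code would destroy coverage. The decipherability must instead be engineered intrinsically, in the spirit of the self-delimiting codings of beta- and Dyck-shifts: the strictly increasing lengths $|f_i|<|f_{i+1}|$, the injective dependence of $C_i$ on $i$, and a careful comparison between the lengths of internal $p$-runs inside $C_i$ and the boundary runs $N_i+N_{i'}$ must combine so that the location of every block boundary is forced, after which the interstitial words are parsed uniquely using the unique representation already enjoyed by $\cG$. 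Checking that all of these requirements can be met simultaneously — rich enough for (iv), rigid enough for (ii), and self-delimiting for (i) — is where I expect the real difficulty of the proof to lie.
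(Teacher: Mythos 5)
Your high-level architecture matches the paper's: adjoin padded words $f_i$ whose cores exhaust the language of the previously built shift (giving (iv) by the same inductive exhaustion), and control (iii) and the ``moreover'' clause through the characteristic equation $\sum_{g\in\widetilde\cG}\lambda^{-|g|}=1$ exactly as you describe. But there is a genuine gap at the two load-bearing points, and it stems from one choice: you pad with a word $p\in\cG$ that already lies in $\cL(X)$. With that choice the padding carries no information — a run $p^{N}$ is indistinguishable from a legitimate concatenation of the generator $p$, so block boundaries are not forced, and you correctly sense that (i) becomes intractable; indeed you concede that you do not know how to close it (``where I expect the real difficulty of the proof to lie''). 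Your argument for (ii) is likewise incomplete: the claim that every residual ray ``must equal $p^\infty$'' only controls limits centered in the padding region, whereas limits of the $f_i$ centered inside the cores $C_i$ produce, by (iv) itself, essentially arbitrary points of $\widetilde X$; and the concluding step (``should confine the contribution \ldots to entropy-free rigid orbits'') is asserted rather than proved.

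Moreover, the obstruction you invoke to rule out markers — that (iv) forces every candidate marker to occur inside generators, so marker-based decipherability ``would destroy coverage'' — is a misdiagnosis, and it is precisely the point where the paper's proof does something you declared impossible. The paper chooses \emph{two} distinct words $u,v$ of equal length \emph{not} in $\cL(X)$, with $p_u=u^\infty$, $p_v=v^\infty$ of prime period $|u|$ and disjoint orbits, and sets $f_i=u^{2^i|w(i)|}\,w(i)\,v^{2^i|w(i)|}$, where the nested cores $w(i)\subset w(i+1)$ are $\cG_i$-concatenations containing all words of length $i$ of $X(\cG_i)$. The markers need only be new relative to $X$, not forbidden in $\widetilde X$: occurrences of $u,v$ deep inside later generators are handled by a chain-of-generators argument — a point of $\widetilde X_\seq$ admits no infinite chain $f_{i_1}\subset f_{i_2}\subset\cdots$, and the top element of a maximal finite chain is the forced parse (here the prime-period hypothesis on $u,v$ is used), which yields (i). For (ii), every point of $\widetilde X_\res\setminus X_\res$ contains $u$ or $v$; a case analysis (infinite $u$- or $v$-runs, finitely many maximal chains, infinitely many maximal chains, and finally infinite chains — the last killed by an explicit empirical-measure estimate showing such generic points would converge to $\mu_{p_v}$) proves that the only ergodic measures on $\widetilde X_\res\setminus X_\res$ are the zero-entropy measures $\mu_{p_u},\mu_{p_v}$, whence $h_{\res}(\widetilde X)=h_{\res}(X)$. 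In short: replace your single in-language padding word by the asymmetric pair of out-of-language padding words with exponentially growing run lengths; that single modification is what simultaneously buys you (i) and (ii), and it is exactly the mechanism your proposal argued could not exist.
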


\begin{proof} 

Let $\cG=\{g_i: i\in \bN\}$ be an enumeration of $\cG$. We write $\cG(m)=\{g_1,\dots,g_m\}$. Let $X_m=X(\cG(m))$. Consider 
\[f_m(\lambda)=\sum_{i=1}^{m}\lambda^{-|g_i|},\]
which are strictly positive, decreasing and analytic functions on $(1,\infty)$ satisfying $\lim_{\lambda\to \infty}f_m(\lambda)=0$. 
Let $\lambda_m$ be the unique solution of $f_m(\lambda)=1$ and let $\lambda_*=\lim_{m\to\infty} \lambda_m$. Clearly $\cG(m)$ has a bounded rate of growth
 and thus Lemma \ref{lem:entropy_formula} implies $h_{\rm top}(X_m)=\log \lambda_m$. Moreover, since $h_\con(X)> h_\res(X)$ we may apply \cite[Proposition 28]{BDWY} and obtain 
\begin{equation}
    \lim_{m\to\infty} h_{\rm top}(X_m)= \lim_{m\to\infty} \log \lambda_m=\log \lambda_\ast= h_{\rm top}(X).
    \end{equation}
Let $\epsilon>0$ be given. We note that $(f_m')_m$ is a sequence of strictly negative  continuous functions on $(1,\infty)$ which is decreasing in $m$. Hence, there exists $c>0$ such that $f_m'(\lambda)<-c$ for all $\lambda\in [\lambda_\ast,\lambda_\ast+1]$ and all $m\in \bN$.
Putting these facts together shows that for every $\delta>0$ there exists $\tau>0$ such that
$f_m(\lambda_*+\delta)<1-\tau $ holds for all $m\in \bN$.
Let $\{m_i\}_{i \in \bN}$ be a strictly increasing sequence of positive integers. By making the $m_i$ larger if necessary we can assure that $\sum^{\infty}_{i=1}(\lambda_*+\delta)^{-m_i}<\tau$.
Our goal  is to define $\widetilde{\cG}=\cG\cup \{f_i\}$ with $|f_i|=m_i$. 
The additional generators $f_i$ will be defined below.
Let $\widetilde{\cG}=\{\widetilde{g}_i: i\in \bN\}$ be an enumeration of $\widetilde{\cG}$ and let
$\widetilde{f}_m,\widetilde{X}_m, \widetilde{\lambda}_m$ and $\widetilde{\lambda}_\ast$ be defined analogously as for $X$. It follows from an elementary calculus argument that $\widetilde{f}_m({\lambda_\ast}+\delta)\leq 1$  for all $m\in \bN$.
Hence $\widetilde{\lambda}_\ast\leq \lambda_\ast+\delta$. By making $\delta$ smaller if necessary and by further increasing the integers $m_i$, we can assure that
\begin{equation}\label{h+e}
   \lim_{m\to\infty} \log(\widetilde{\lambda}_m) = \log (\widetilde{\lambda}_{\ast})< \log ( \lambda_{\ast})+\epsilon .
\end{equation}

Next we construct the additional generators $f_i$.
Let $u,v$ be distinct words with $|u|=|v|$ which do not appear in the language of $X$. Moreover, we assume that the periodic points $p_u=u^{\infty}_\infty,p_v=v^\infty_{\infty}$ have disjoint orbits $O(p_u)$ and $O(p_v)$ with prime period $|u|$. Hence, $\{p_u,p_v\}\cap X=\emptyset$.
We note that the prime period condition can be guaranteed by  selecting $|u|$ to be a prime number. In fact, for any prime number $n$ satisfying $|\cA|^n-|\cL_n(X)|\geq 2n$ one can find such words $u$ and $v$ of length $n$. The existence of  prime numbers $n$ with this property follows from $h_{\rm top}(X)<\log |\cA|$ which is a consequence of the entropy-minimality of the full shift. 
Next we inductively define a sequence of words $(w(i))_{i\in \bN}$ and generators
\begin{equation}\label{deffi}
f_i=u^{2^i|w(i)|}w(i)v^{2^i|w(i)|}.
\end{equation}
Fix $g\in \cG$ with $|g|\geq 3$ and define $w(1)=g$ and $f_1$ by \eqref{deffi}. For $i\geq 2$
define $\cG_i=\cG \cup \{f_1,\dots,f_{i-1}\}$. We chose $w(i)$ to be a finite concatenation
of generators in $\cG_i$ which contains all words of length $i$ in $X(\cG_i)$

Moreover, we assume that the first and the last generator in $w(i)$ belong to $\cG$.
We also assume that $w(i)$ satisfies the following:
\begin{itemize}
\item[ (i)] $|w(i)| \geq m_i$;
\item[(ii)] $|w(i+1)|> |w(i)|$ for all $i \in \bN$;
\item[(iii)] $w(i)\subset w(i+1)$ for all $i \in \bN$.  
\end{itemize}
We now define $\widetilde \cG =\cG\cup \{f_i: i\in \bN\}$ and $\widetilde{X} = X(\widetilde \cG)$.
It follows from property $(ii)$ and Equation \eqref{deffi} that $|f_i| < |f_{i+1}|$ for all $i\geq 1$. 
We introduce some notation. We say $x\in \widetilde{X}$ contains a finite chain of generators $f_{i_1}\subset f_{i_2}\subset \dots \subset f_{i_l}$ if there exist
$r_l<r_{l-1}<\dots <r_1 < s_1<s_2<\dots < s_l$ such that $x[r_j,s_j]=f_{i_j}$ for all $j=1,\dots ,l$. Moreover, we say a finite chain of generators $f_{i_1}\subset f_{i_2}\subset \dots \subset f_{i_l}$ of $x$ is maximal if the chain cannot be extended to a larger finite chain of generators of $x$.
Analogously, we say $x$ contains an infinite chain of generators $f_{i_1}\subset f_{i_2}\subset \cdots \subset f_{i_l}\subset f_{i_{l+1}} \subset\cdots$ if there exist
$\cdots < r_{l+1}<r_l< \dots <r_1 < s_1<\dots < s_l<s_{l+1}<\cdots$ such that $x[r_j,s_j]=f_{i_j}$ for all $j\in \bN$.
 It follows readily from the definition of the generators $f_i$ that $x\in \widetilde{X}_{\con}$ cannot have infinite chains of generators in $\{f_i: i\in \bN\}$. Moreover, if $f_{i_1}\subset \cdots \subset f_{i_l}$ is a maximal finite chain of generators of $x\in \widetilde{X}_{\con}$  then 
$f_{i_l}=x[r_{i_l},s_{i_l}]$ is the only possibility in the representation of $x$ as a concatenation of generators in $\widetilde{\cG}$. 
To see this we notice that by \eqref{deffi} the  precise ${2^{i_l}|w({i_l})|}$-times concatenation of $u$ occurs at the beginning of $f_{i_l}$. Moreover, this representation is unique since by maximality of the chain, the generator $f_{i_l}$ cannot be a subword of a larger generator $f_{i_{l+1}}$.
Here we also use the fact that $p_u$ and $p_v$ are of prime period $|u|$.
Putting these properties  together implies that
$\widetilde \cG$ uniquely represents
$\widetilde X_{\con}$.  
Further, if $x\in \widetilde X_{\rm con}$ and $n\in \bN$, then there exists $N\in \bN$ such that $x[-n,n]\subset f_i$ for all $i\geq N$ which implies $\widetilde{X}_{\rm con} \subset \widetilde X_{\lim}$. In fact, we have proved that $\widetilde{X} \subset \widetilde X_{\lim}$.

To complete the proof we have to show that $h_{\res}(X)=h_{\res}(\widetilde X)$. This is a consequence of  the following claim.

Claim: If $\mu$ is an ergodic invariant measure with $\mu(\widetilde{X}_{\res}\setminus X_{\res})=1$ then $\mu=\mu_{p_u}$ or $\mu=\mu_{p_v}$.


Let $\mu$ be an ergodic invariant measure with $\mu(\widetilde{X}_{\res}\setminus X_{\res})=1$.
This means that $\mu(\widetilde{X}_{\res}\setminus X)=1$. Every element of the set $\widetilde{X}_{\res}\setminus X$ has either $u$ or $v$ as a subword. Then we take $Y=\tilde X_{\lim}\cap(\widetilde{X}_{\res}\setminus X)$ and conclude from Lemma 4.1 in \cite{Pavlov}  that $\mu(Y)=1$.
We recall that $$\widetilde X_{\text{lim}}=\{x \in \widetilde X:\  \forall n \in \bN\,\, \exists \tilde g(n) \in \widetilde \cG,\,\,  x[-n,n] \subset \widetilde g(n)  \}.$$

We first consider the set of points in $Y$ which contain an infinite (one-sided or two-sided) concatenation of $u$'s or $v$'s. We have the following possibilities:
\begin{itemize}
\item[(i)] $O(p_u)$ and $O(p_v)$; 
\item[(ii)] $\{u_\infty g_{k_1}g_{k_2}g_{k_3}\dots :\, g_{k_j}\in \widetilde{\cG}\}$ and $\{v_\infty g_{k_1}g_{k_2}g_{k_3}\dots :\, g_{k_j}\in \widetilde{\cG}\}$;

\item[(iii)] $\{\dots g_{k_{-3}}g_{k_{-2}}g_{k_{-1}} v^\infty: g_{k_j}\in \widetilde{\cG}\}$ and  $\{\dots g_{k_{-3}}g_{k_{-2}}g_{k_{-1}} u^\infty: g_{k_j}\in \widetilde{\cG}\}$.
\end{itemize}
We note that the sets in (i)-(iii) are shift invariant. If $\mu$ puts full measure on one of the periodic orbits in (i) then there is nothing to prove. Moreover, it follows from standard arguments that $\mu$ cannot put positive measures on either of the sets in (ii) and (iii). It remains to consider the set of points $x\in Y$ which only contain finite concatenations of $u$ and $v$. 
 We observe that $\mu$ puts zero measure on the set of points $x$ which only contain finite chains of generators in $\{f_i:i\in \bN\}$.
 This can be seen by evaluating the following cases: First, the set of points $x$ that have infinitely many maximal finite chains of generators in  $\{f_i:i\in \bN\}$ expanding in a negative and positive direction cannot have positive $\mu$-measure since these points belong to $\widetilde{X}_{\con}$. Here we also use the fact that by construction of the generators $f_i$, these finite maximal chains of generators can not overlap.
 Second, the sets of points $x$ which have infinitely many maximal finite chains of generators in  $\{f_i:i\in \bN\}$ which expand either in positive or in negative direction cannot carry positive $\mu$-measure. This follows from a standard argument applying the $\sigma$-invariance of $\mu$. Finally, the set of points $x$ which only have finitely many maximal finite chains of generators can also not carry positive $\mu$-measure.

It remains to consider the set $Y_{F}(\infty)$ of points $x$ which contain an infinite chain of generators in $\{f_i:i\in \bN\}$. Note that $Y_{F}(\infty)$ is shift invariant. Suppose that $\mu$ puts  positive (and hence full) measure on $Y_{F}(\infty)$. It follows from Birkhoff's Ergodic Theorem that the set of $\mu$-generic points in $Y_{F}(\infty)$ is of full $\mu$ measure. Let $x\in Y_{F}(\infty)$ be a $\mu$-generic point, that is,
\begin{equation}\label{eq222}
\mu_x(n)=\frac{1}{n}\sum_{k=0}^{n-1}\delta_{\sigma^k(x)}
\to \mu\,\,\,\,\,\, \mbox{for}\,\,\, n\to\infty,
\end{equation}
where the limit is taken in the weak$^\ast$ topology. We will show that \eqref{eq222} is not possible.
It follows from the definition of having an infinite chain of generators in $\{f_i:i\in \bN\}$ that for sufficiently large $l\in \bN$, the point $x$ is of the form 
\begin{equation}\label{eq222trick}
x=\cdots u^{2^{i_l}|w(i_l)|}x_{-r}x_{-r+1}\cdots .x_0x_1x_2\cdots x_{|w(i_l)|-r-1}
v^{2^{i_l}|w(i_l)|}\cdots
\end{equation}
for some $r=r(l)>0$. In particular $r<|w(i_l)|$. Let $\phi\in C(\widetilde{X},\bR), N\in\bN $ and $\delta>0$. Let $C=\sup\{|\phi(y)-\phi(\tilde y)|: y,\tilde y \in \widetilde{X}\}<\infty$. We now select $i_l$ as in Equation \eqref{eq222trick} with the following additional properties: We require that
\[
|\phi(y)-\phi(\tilde y)|<\frac{\delta}{2}\,\,\,\, \mbox{whenever}\,\,\, y[-s,s]=\tilde y[-s,s],
\]
where $s=|v|\, |w(i_l)|$. We further require that $n=|v|\, |w(i_l)|(2^i-1)+|w(i_l)|-r> N$ and that the following inequality holds:
\[
\frac{1}{n}\left((|v|\, |w(i_l)|+|w(i_l)|-r)C  + |v|\,|w(i_l)|(2^{i_l}-2)\frac{\delta}{2}\right)
<\delta.
\]
Let $t=|v|\,|w(i_l)|+|w(i_l)|-r$.
Then
\begin{align*}
\left|\int \phi\,d\mu_x(n)-\int \phi\,d\mu_{p_v}\right|&= \frac{1}{n}\left|\sum_{k=0}^{n-1} \phi(\sigma^k(x))-n\int \phi\,d\mu_{p_v}\right|\\
&\leq \frac{1}{n}\left|\sum_{k=0}^{t-1} \phi(\sigma^k(x))- t\int \phi\,d\mu_{p_v}\right|\\
&\hspace{1cm}+\frac{1}{n}\left|\sum_{k=t}^{n-1}\phi(\sigma^k(x))- (n-t)\int \phi\,d\mu_{p_v}\right|\\
&\leq  \frac{C}{n}(|v|\, |w(i_l)|+|w(i_l)|-r)\\
&\hspace{1cm}+ \frac1n|v|\,|w(i_l)|(2^{i_l}-2)\frac{\delta}{2}\\
&<\delta.
\end{align*}
Since $\phi,N$ and $\delta$ were arbitrary we conclude that $\mu_x(n)$ can not converge to a measure which puts full measure on $Y_F(\infty)$. This proves the claim which implies $h_{\res}(X)=h_{\res}(\widetilde X)$. We conclude that $h_{\res}(\widetilde X)<h_{\rm con}(\widetilde{X})$. 
By applying \cite[Proposition 28]{BDWY} to $\widetilde{X}$  we deduce that
 $\log ( \widetilde{\lambda_{\ast}})=h_{\top}(\widetilde X)$. Finally, we conclude from equation \eqref{h+e} that $h_{\top}(X) < h_{\top}(\widetilde X)< h_{\top}(X)+\epsilon.$
\end{proof}

\section{Topological Structure Arising from Coding}\label{sec:G-topology}

Let $X$ be a coded shift space with a generating set $\cG$. For $g\in\cG$ we define
\begin{align*}
  [g] & =\{x\in X: x=\cdots x_{-1}.x_0x_1\cdots\text{ with } x_i\in\cA\text{ and }x_0x_1...x_{|g|-1}=g\}; \\
  \llb g \rrb & =\{x\in X: x=\cdots g_{-2}g_{-1}.g_0g_1g_2\cdots\text{ with } g_i\in\cG\text{ and }g_0=g\}.
\end{align*}
Hence, $[g]$ is the standard cylinder set of the word $g$ in $X$, whereas $ \llb g \rrb$ is a set of all points in $X_\con$ which have
a representation as a concatenation of the elements of the generating set in which $g$ appears starting at the zeroth coordinate. Note that $ \llb g \rrb$ depends on a particular choice of the generating set $\cG$ for $X$, therefore we call the sets of this type $\cG$-cylinders. Assume now that $\cG$ uniquely represents $X_{\rm con}$. We define
\begin{equation}\label{defE}
E=E(\cG)=\bigcup_{g \in \cG} \llb  g \rrb.
\end{equation}
It follows from \eqref{defE} and the shift invariance of $X_{\rm con}$  that 
\begin{equation}\label{eqneu1}
X_{\rm con}=\bigcup_{n\in \bZ}\sigma^n(E),
\end{equation}
however the sets on the right-hand side of \eqref{eqneu1} are in general not disjoint.
The next lemma provides a decomposition of $X_\con$ into countably many disjoint sets.
\begin{lemma}\label{lem:partition} Let $X$ be a coded shift with the generating set $\cG$. If $\cG$ uniquely represents $X_\con$ then
  \begin{equation}\label{eqxsecE}
  X_\con = \bigcup_{g \in \cG} \bigcup^{|g|-1}_{k=0} \sigma^k \llb  g \rrb,
  \end{equation}
and the sets in this decomposition are pairwise disjoint.
\end{lemma}
\begin{proof}
Since $\cG$ uniquely represents the elements in $X_{\con}(\cG)$ the identity \eqref{eqxsecE} follows from the definitions of $\cG$-cylinders, the set $E$ and $X_{\rm con}$. Moreover, we have
$\llb g \rrb \cap  \llb h \rrb = \emptyset$ for $g \neq h $, $g,h \in \cG$ and in case $|g|>1$ also
$\sigma^k \llb g \rrb \cap \llb h \rrb = \emptyset$ for $1 \leq k \leq |g|-1$ and any $h \in \cG$.
This follows because $\sigma^k \llb g \rrb \cap E =\emptyset$ for $1 \leq k \leq |g|-1$ and $\llb h \rrb \subset E$
so that $\sigma^k \llb g \rrb \cap \llb h \rrb = \emptyset$.
The cases $\sigma^k \llb g \rrb \cap \sigma^l\llb h \rrb = \emptyset$ when
$1 \leq k \leq |g|-1$ and $1 \leq l \leq |h|-1$ (and $k \neq l$ if $g=h$)
are reduced to the previous case by applying $\sigma^{-\min(k,l)}$.
\end{proof}
We need an alternative representation of the set $E.$
We fix an enumeration of the elements of the generating set $\cG$, i.e., $\cG=\{g_i: i\in\bN\}$. Then we can associate with $X(\cG)$ the full shift on a countable alphabet $(Y,\sigma)=(\bN^{\bZ},\sigma)$ in the following way. Consider the map $\pi:Y\to X_\con$ defined by
\begin{equation}\label{eq:def_pi}
  \pi(y)=\cdots g_{y_{-2}}g_{y_{-1}}.g_{y_{0}}g_{y_{1}}g_{y_{2}}\cdots. 
\end{equation}
and let $E=\pi(Y)$. This definition of $E$ is consistent with \eqref{defE}. Hence, $E$ is the set of all concatenations of words from $\cG$, whose entry at zero is the first letter of a generator in $\cG$. We note that $\pi$ is a continuous injection.
Further note that both $X$ and $Y$ endowed with respective metrics defined in \eqref{defmetX} are  Polish spaces. 

We now show that the concatenation set, the residual set, $\cG$-cylinders as well as the set $E$ are Borel sets.

\begin{lemma}\label{gcylindermeas}
Let $X$ be a coded shift with generating set $\cG=\{g_i: i\in\bN\}$ such that $\cG$ uniquely represents $X_{\con}$, and let $g\in\cG$. Then the sets $E, X_{\rm con}, X_{\rm res}$ and $\llb g \rrb$ are Borel sets.
\end{lemma}
\begin{proof}
 Recall that $Y$ is a Polish space and  $\pi$ is a continuous injection. Therefore, $E=\pi(Y)$ being  Borel is a consequence of the Lusin-Suslin Theorem. Hence by \eqref{eqneu1}, $X_{\rm con}$ and $X_{\rm res}$ are Borel sets as well.
 It remains to consider the  $\cG$-cylinder  $\llb g \rrb$.
 Define $Y_g=\pi^{-1}(\llb g \rrb)=\{y=\dots y_{{-2}}y_{-1}.y_{0}y_{1}y_{2}\dots\in Y:  g_{y_0}=g\}$ and note that $Y_g=[y_0]$ is a Polish subspace of $Y$. 
 Applying again the Lusin-Suslin Theorem yields that $\pi(Y_g)=\llb g \rrb$ is a Borel set. 
 \end{proof}
We remark that the proof of Lemma \ref{gcylindermeas} does not generalize to the situation when $\cG$ does not uniquely represent $X_{\rm con}$. In this case the lack of injectivity of the map $\pi$ allows us only to conclude that the sets $E$ and $\llb g \rrb$ are analytic rather than Borel. However, one can still show that   $E, \llb g \rrb, X_{\rm con},X_{\rm 
 res}$ are universally measurable sets. In particular, these sets are $\mu$-measurable for all $\mu\in \cM_\sigma(X)$ provided one requires the measures in $\cM_\sigma(X)$ to be complete.
 Since we do not need this result we  leave the details to the reader.
 \\[0.3cm]
 Next we extend the notion of $\cG$-cylinders to finite concatenations of generators. Given $g_0,\dots,g_k\in\cG$ we define extended $\cG$-cylinders 
\[
\llb g_0\cdots g_k \rrb  =\{x=\cdots g'_{-1}.g'_0g'_1\cdots:\, g'_i\in\cG\text{ and }g'_j=g_j \text{ for } j=0,\dots,k\}.
\]
Clearly,
\[
\llb g_{0}\cdots g_{k}  \rrb=\bigcap_{i=0}^k \sigma^{-l_i}\llb g_{i}  \rrb,
\]
where $l_i=\sum_{j=0}^{i-1}|g_j|$. Thus, extended $\cG$-cylinders are Borel sets follows from Lemma \ref{gcylindermeas}.

For $w \in \cA^{\ast}$ we define $[w]_\con = [w] \cap X_\con$. Clearly $\llb g_{0}\cdots g_{k}  \rrb = \llb g_{0}\cdots g_{k}  \rrb \cap X_\con \subseteq [g_{0}\cdots g_{k}]_\con$.

\begin{lemma}\label{sigma_lim}Suppose $\cG$ uniquely represents $X_{\rm con}$.
For any $w \in \cA^{\ast}$ the set $[w]_\con  \subset X_\con$ is a countable disjoint union of shifted extended $\cG$-cylinders.
\end{lemma}
\begin{proof}
If $x \in [w]_\con$, then $x =\cdots g_{-1}g_0g_1\cdots$ with $g_{k} \in \cG$ and there is a word, say $g_{0}\cdots g_{j}$,
which contains $w=x[0,|w|-1]$ as a subword, so that $j \leq |w|$. There are only countably many such words,
and $x$ is contained in the shifted extended $\cG$-cylinder $\sigma^{l(x)}\llb g_{0}\cdots g_{k}  \rrb$ where
the shift $\sigma^{l(x)}$ places $g_{0}\cdots g_{k}$ so that the subword $w$
starts at position zero.
In fact, there may be several (but finitely many) possibilities that the word $w$ appears in $g_{0}\cdots g_{k}$. We conclude that  there
are finitely many respective shifts and a union of shifted extended $\cG$-cylinders satisfying
    
\begin{equation}\label{eqcoutG}
\bigcup^{n(w, g_{0}\cdots g_{k})}_{m =1} \sigma^{l_m}\llb g_{0}\cdots g_{k}  \rrb \subset [w]_\con. 
\end{equation}
It is easy to see that each of the shifted extended $\cG$-cylinders in the union is contained in $[w]_\con$.
By construction every $x \in [w]_\con$ is in one of the sets in \eqref{eqcoutG} and there are countably many of these sets.
We conclude that
$$[w]_\con=\bigcup_{w \subset g_{0}\cdots g_{k} \in \cG} \bigcup^{n(w, g_{0}\cdots g_{k})}_{m =1} \sigma^{l_m}\llb g_{0}\cdots g_{k}  \rrb $$
is a countable union of shifted extended $\cG$-cylinders. The disjointness of the cylinders in this union follows readily from similar arguments as in Lemma \ref{sigma_lim}.
\end{proof}

Recall that (shifted) $\cG$-cylinders and  (shifted) extended $\cG$-cylinders  are Borel subsets contained in $X_{\con}$.
We now define a class of shift invariant measures which assign full measure to $X_\con$. Namely,
we assign for $g \in \cG$ a measure $p_g \in [0,1]$ to the $\cG$-cylinder $\llb g \rrb$.
We also assume that $\sum_{g \in \cG}p_g=1$.
We then extend the probabilities independently to all $\cG$-cylinders by using shift invariance and intersections
of $\cG$-cylinders. By the generating property of the $\cG$-cylinders this gives a measure, say $m$,  on
$X_\con$, so that $m(\llb g_{0}\cdots g_{k}\rrb)=p_{g_0}\cdots p_{g_k}$.
 We note that the measure $m$ is not
necessarily finite. It follows from Lemma \ref{lem:partition} that for any shift invariant measure $\mu$ that assigns full measure to $X_{\con}$ we have
\begin{equation}\label{eq:mu(X)}
  \mu(X)=\sum_{g \in \cG} |g| \mu(\llb g \rrb).
\end{equation}
Therefore, in order for the measure $m$ to be finite, we must have $\sum_{g \in \cG} |g| p_g<\infty$. In this case we may normalize $m$ to obtain a probability measure on $X$. Because of the similarity with classical Bernoulli measures as well as its dependence on the coding $\cG$ we call the corresponding measures $\cG$-Bernoulli. This is formalized as follows.
\begin{definition}Assume $\cG$ uniquely represents $X_{\rm con}$.
   We say a measure $\mu\in \cM_\sigma(X(\cG))$ is  $\cG$-Bernoulli if there exist positive real numbers $p_g,\, g\in \cG$ with $\sum_{g \in \cG}p_g=1$ and  $c=\sum_{g \in \cG} |g| p_g<\infty$ such that for any $k\in\bN$ and any $g_0,...,g_k\in\cG$ we have
  $$\mu(\llb g_0\cdots g_k\rrb)=\frac{1}{c}p_{g_0}\cdots p_{g_k}.$$
\end{definition}

\section{Uniqueness of Measures of Maximal Entropy}\label{sec:mme}

In this section we establish the uniqueness of measures of maximal entropy for a large class of coded shift spaces. Note that the existence follows from expansivity of the system. If the generating set consists of finitely many words then the corresponding coded shift is sofic. Since the properties of equilibrium states on sofic shifts are very well understood, for simplicity of exposition we assume in the following that our generating set is countably infinite. All our results remain valid when the generating set is finite.

Let $X=X(\cG)$ be a coded shift such that $\cG$ uniquely represents $X_{\rm con}$.
We recall the definition of the map $\pi:Y\to E$ in \eqref{eq:def_pi}. 
As usual define the first return time to be $r_E(x)=\min \{n \geq 1: \ \sigma^n(x) \in E \}$  and the induced shift map $\sigma_E(x)=\sigma^{r_E(x)}(x)$. The fact that $\cG$ uniquely represents $X_\con$ implies that $\pi$ is invertible. In this case $\pi$ is a continuous bijection between $Y$ and $E$ and the following diagram commutes: 


\begin{equation}\label{eqYpiE}
\begin{tikzcd}
Y \arrow[r, "\sigma" ] \arrow[d, "\pi"] & Y  \arrow[d, "\pi"]  \\
E \arrow[r, "\sigma_E"] & E
\end{tikzcd}
\end{equation}

We will exploit \eqref{eqYpiE} to relate $\sigma_E$-invariant measures on $E$ to $\sigma$-invariant measures on $Y$.
Let $\cM_E$ be the set of $\sigma_E$-invariant probability measures on $E$ and let $$\cM_\con=\{\mu\in\cM_\sigma(X):\mu(X_\con)=1\}.$$ It follows from (\ref{eq:mu(X)}) that for any $\mu\in\cM_\con$ we have that $\mu(E)>0$. Hence, we can define a map $\bi:\cM_\con\to\cM_E$ by
\begin{equation}\label{eq:ind_map}
  \bi(\mu)=\frac{1}{\mu(E)}\mu|_E.
\end{equation}
We will show in the next lemma that the map $\bi$ is injective and that the image of $\cM_\con$ under $\bi$ is precisely the set
\begin{equation}\label{eq:induced_set}
  \cM_{\rm ind}=\{\mu\in\cM_E: \sum_{i=1}^{\infty}|g_i|\mu(\llb g_i \rrb)<\infty\}.
\end{equation}
We remark that the formula for a lift of a $\sigma_E$-invariant measure on $E$ to a $\sigma$-invariant measure on $X$ provided below could be deduced from the general theory for induced systems, see, e.g., \cite{aar}. However, the direct proof is fairly short and we include it for completeness.
\begin{lemma}\label{lem:lift}
  Suppose $\cG=\{g_i\}_{i\in\bN}$ uniquely represents $X_\con$. Then the map $\bi:\cM_\con\to\cM_{\rm ind}$ defined in \eqref{eq:ind_map} is a bijection and its inverse $\bl:\cM_{\rm ind}\to \cM_\con$ is given by
  $$\bl(\mu)(A)=\left(\sum_{i=1}^{\infty}|g_i|\mu(\llb g_i \rrb)\right)^{-1}\sum_{i=1}^{\infty}\sum_{j=0}^{|g_i|-1}\mu(\llb g_i \rrb\cap\sigma^{-j}(A)).$$
  \end{lemma}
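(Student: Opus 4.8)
The plan is to recognize Lemma \ref{lem:lift} as the standard correspondence between the invariant measures of a system and the invariant measures of its first-return (induced) map, specialized to the partition of $E$ by return time. The starting observation is that, because $\cG$ uniquely represents $X_\seq$, a point $x\in\llb g\rrb$ has its next generator boundary exactly at position $|g|$, so $r_E(x)=|g|$ for every $x\in\llb g\rrb$ and hence $\sigma^{|g|}|_{\llb g\rrb}=\sigma_E|_{\llb g\rrb}$. By the same uniqueness, $\sigma^j x\notin E$ whenever $x\in\llb g\rrb$ and $0<j<|g|$, since such a $j$ lies in the interior of the generator $g$. Two further facts are used throughout: the $\cG$-cylinders $\{\llb g\rrb\}_{g\in\cG}$ are pairwise disjoint with $E=\bigsqcup_{g\in\cG}\llb g\rrb$ (Lemma \ref{lem:partition}), and $\sigma$ is a homeomorphism of $X\subset\cA^\bZ$, so each $\sigma^j$ is a bijection. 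Measurability of all the sets appearing below, e.g. $\llb g\rrb\cap\sigma^{-j}(A)$, is guaranteed by Lemma \ref{gcylindermeas}.

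First I would check that both maps are well defined. For $\mu\in\cM_\seq$, Equation \eqref{eq:mu(X)} gives $\sum_{g}|g|\mu(\llb g\rrb)=\mu(X)=1$, so $\mu(E)=\sum_g\mu(\llb g\rrb)>0$ and $\bi(\mu)=\mu|_E/\mu(E)$ is a probability measure on $E$; its $\sigma_E$-invariance is the standard induced-measure fact (Poincar\'e/Kac), and $\sum_g|g|\,\bi(\mu)(\llb g\rrb)=\mu(E)^{-1}\sum_g|g|\mu(\llb g\rrb)=\mu(E)^{-1}<\infty$, so $\bi(\mu)\in\cM_{\rm ind}$. Conversely, for $\nu\in\cM_{\rm ind}$ set $c=\sum_g|g|\nu(\llb g\rrb)<\infty$; then $\bl(\nu)$ is a nonnegative countably additive set function with $\bl(\nu)(X)=c^{-1}\sum_g|g|\nu(\llb g\rrb)=1$, and since $\llb g\rrb\subset X_\seq$ and $\sigma^{-j}(X_\seq)=X_\seq$ the same computation gives $\bl(\nu)(X_\seq)=1$, so $\bl(\nu)$ is a probability measure concentrated on $X_\seq$.

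The main step, which I expect to be the crux, is the $\sigma$-invariance of $\bl(\nu)$. Replacing $A$ by $\sigma^{-1}(A)$ in the defining formula and reindexing the inner sum by $j\mapsto j+1$ telescopes to
\[
c\,\bl(\nu)(\sigma^{-1}A)-c\,\bl(\nu)(A)=\sum_{g\in\cG}\nu(\llb g\rrb\cap\sigma^{-|g|}A)-\sum_{g\in\cG}\nu(\llb g\rrb\cap A),
\]
so it suffices to show the two sums on the right are equal. The second equals $\nu(E\cap A)=\nu(A)$ by disjointness of the $\cG$-cylinders. For the first, the identity $\sigma^{|g|}|_{\llb g\rrb}=\sigma_E|_{\llb g\rrb}$ yields $\llb g\rrb\cap\sigma^{-|g|}(A)=\llb g\rrb\cap\sigma_E^{-1}(A\cap E)$, so summing over $g$ gives $\nu(\sigma_E^{-1}(A\cap E))=\nu(A\cap E)=\nu(A)$ by $\sigma_E$-invariance of $\nu$. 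Thus both sums equal $\nu(A)$ and $\bl(\nu)$ is $\sigma$-invariant; the correct use of $r_E=|g|$ is exactly what drives this equality.

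Finally I would verify that $\bi$ and $\bl$ are mutual inverses. For $\mu\in\cM_\seq$ with $\nu=\bi(\mu)$ one has $c=\mu(E)^{-1}$ and $\nu(\llb g\rrb\cap\sigma^{-j}A)=\mu(E)^{-1}\mu(\llb g\rrb\cap\sigma^{-j}A)$, hence $\bl(\nu)(A)=\sum_g\sum_{j=0}^{|g|-1}\mu(\llb g\rrb\cap\sigma^{-j}A)$; using that $\sigma^j$ is a bijection, each term rewrites as $\mu(A\cap\sigma^j\llb g\rrb)$, and Lemma \ref{lem:partition} together with $\mu(X_\seq)=1$ collapses the double sum to $\mu(A)$, giving $\bl\circ\bi=\mathrm{id}$. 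In the other direction, for $\nu\in\cM_{\rm ind}$ and measurable $B\subseteq E$, the fact that $\sigma^j x\notin E$ for $x\in\llb g\rrb$ and $0<j<|g|$ kills all but the $j=0$ terms of $\bl(\nu)(B)$, so $\bl(\nu)(B)=c^{-1}\nu(E\cap B)=c^{-1}\nu(B)$; since also $\bl(\nu)(E)=c^{-1}$, we obtain $\bi(\bl(\nu))(B)=\nu(B)$ on a generating family of subsets of $E$, so $\bi\circ\bl=\mathrm{id}$. This shows $\bi$ is a bijection with inverse $\bl$, completing the proof.
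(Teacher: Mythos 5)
Your proposal is correct and takes essentially the same route as the paper's proof: the identical telescoping computation for the $\sigma$-invariance of $\bl(\nu)$ (reindexing $j\mapsto j+1$, using $\sigma^{|g|}\vert_{\llb g\rrb}=\sigma_E\vert_{\llb g\rrb}$, the $\sigma_E$-invariance of $\nu$, and the disjoint decomposition of $E$ into $\cG$-cylinders), followed by the verification $\bl\circ\bi=\mathrm{id}$ via Lemma \ref{lem:partition} and the identity $\sum_i |g_i|\,\bi(\nu)(\llb g_i\rrb)=\nu(E)^{-1}$. If anything, you are slightly more complete than the paper, which stops after proving $\bl\circ\bi=\mathrm{id}$, whereas you also check $\bi\circ\bl=\mathrm{id}$ explicitly by observing that $\sigma^j x\notin E$ for $x\in\llb g\rrb$ and $0<j<|g|$, so only the $j=0$ terms survive when $A\subseteq E$.
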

\begin{proof}
  Note that measure $\tilde{\mu}$ given by
  $\tilde{\mu}(A)=\sum_{i=1}^{\infty}\sum_{j=0}^{|g_i|-1}\mu(\llb g_i \rrb\cap\sigma^{-j}(A))$ is a $\sigma$-invariant measure on $X$.
  Indeed,
  \begin{align}\label{eq:sigmainv}
  \begin{split}
    \tilde{\mu}\circ \sigma^{-1}(A)&= \sum_{i=1}^{\infty}\sum_{j=0}^{|g_i|-1}\mu(\llb g_i \rrb\cap\sigma^{-(j+1)}(A)) \\
    & = \sum_{i=1}^{\infty}\sum_{j=1}^{|g_i|-1}\mu(\llb g_i \rrb\cap\sigma^{-j}(A))+\sum_{i=1}^{\infty}\mu(\llb g_i \rrb\cap\sigma^{-|g_i|}(A))
  \end{split}
    \end{align}
  Since $\sigma^{|g_i|}=\sigma_E$ on $\llb g_i\rrb$ and $\mu$ is $\sigma_E$-invariant we obtain that $\mu(\llb g_i \rrb\cap\sigma^{-|g_i|}(A))=\mu(\sigma^{|g_i|}\llb g_i \rrb\cap(A))$. Recall that the sets $\sigma^{|g_i|}\llb g_i \rrb$ are pairwise disjoint, which gives a decomposition $E=\bigcup_{i=1}^\infty\sigma^{|g_i|}\llb g_i \rrb$. It follows that
  $$\sum_{i=1}^{\infty}\mu(\llb g_i \rrb\cap\sigma^{-|g_i|}(A))=\mu(E\cap A)=\sum_{i=1}^{\infty}\mu(\llb g_i \rrb\cap A).$$
  Replacing the last term of (\ref{eq:sigmainv}) by this expression and combining the sums we see that $\tilde{\mu}\circ\sigma^{-1}(A)=\tilde{\mu}(A)$ and hence $\tilde{\mu}$ is $\sigma$-invariant. When $\mu\in\cM_{\rm ind}$ the quantity $\tilde{\mu}(X)=\sum_{i=1}^{\infty}|g_i|\mu(\llb g_i \rrb)$ is finite so that we can normalize $\tilde{\mu}$ and obtain that $\bl(\mu)=\frac{\tilde{\mu}}{\tilde{\mu}(X)}\in\cM_{\rm con}$. 

  On the other hand, consider any $\nu\in\cM_\con$ and let $\mu=\bi(\nu)$. Then by (\ref{eq:mu(X)})
   $$\sum_{i=1}^{\infty}|g_i|\mu(\llb g_i \rrb)=\sum_{i=1}^{\infty}|g_i|\frac{\nu(\llb g_i \rrb)}{\nu(E)}=\frac{1}{\nu(E)}\nu(X)=\frac{1}{\nu(E)}.$$
   Using Lemma \ref{lem:partition} and $\sigma$-invariance of $\nu$ we see that $$\nu(A)=\sum_{i=1}^{\infty}\sum_{j=0}^{|g_i|-1}\mu(\sigma^{j}\llb g_i \rrb\cap A)=\sum_{i=1}^{\infty}\sum_{j=0}^{|g_i|-1}\mu(\llb g_i \rrb\cap\sigma^{-j}(A))$$
  for any measurable $A\subset X$. Substituting both expressions into the formula for the lift map $\bl$ we obtain
  $$\bl\circ\bi(\nu)(A)=\nu(E)\sum_{i=1}^{\infty}\sum_{j=0}^{|g_i|-1}\frac{\nu(\llb g_i \rrb\cap\sigma^{-j}(A))}{\nu(E)}=\sum_{i=1}^{\infty}\sum_{j=0}^{|g_i|-1}\mu(\sigma^{j}\llb g_i \rrb\cap A)=\nu(A).$$

\end{proof}

To deduce the existence and uniqueness of the measures which maximize the concatenation entropy we utilize the relationship \eqref{eqYpiE} between the systems $(E,\sigma_E)$ and $(Y,\sigma)$ and the ergodic theory for shifts with 
countable alphabet developed in \cite{MU,Sarig}. In \cite{MU} Mauldin and Urbanski proved that a H\"{o}lder potential $\psi$ defined on a one-sided shift with countable alphabet  $\bN^\bN$ has a unique equilibrium state (which is also Gibbs) whenever 
\begin{align}
  & \sum_{i=1}^\infty\exp(\sup \psi\vert_{[i]})<\infty\label{MU1}\\
  &\text{ and }\notag\\ 
  & \sum_{i=1}^\infty \inf(-\psi\vert_{[i]})\exp(\inf \psi\vert_{[i]})<\infty\label{MU2}
\end{align} 
hold, where $[i]$ denotes the one-sided cylinder of $i\in\bN$, i.e. $[i]$ is the set of all elements of $\bN^{\bN}$ with first coordinate $i$.
Here we say (following \cite{MU}) that a potential $\psi$ on a shift with a countable alphabet  is H\"{o}lder if
there are  constants $C$ and $\alpha$ such that for any $y'$ and $y''$ with $y_0'=y_0''$ we have
$|\psi(y')-\psi(y'')|\leq Cd(y',y'')^{\alpha}.$ 
Here $d(.,.)$ is the standard metric on $X$ defined in \eqref{defmetX}.
Note that the inequality must hold only when $y'_0=y''_0$, which differs from the case of a finite alphabet. Although our shift $Y$ is two-sided, our potential $\psi$ will not depend on the backward history (it is constant on cylinders of length one) and hence the results from \cite{MU} are applicable in our settings.

To estimate the convergence of the above series we use the representation of the concatenation set of a coded shift as an edge shift for a connected countable directed graph $\Gamma$. 
The nature of this representation as well as the way to adapt relevant results from graph theory to our settings is explained in \cite{Pavlov} (also see \cite{Petersen}). We do not repeat it here, but make use of the conclusions. Suppose $\Gamma$ is a countable labeled directed graph associated to the coded shift $X(\cG)$ and $h_G(\Gamma)$ is its Gurevich entropy. Then we always have $h(\cG)\le h_G(\Gamma)\le h_{\rm top}(X)$ and 
\begin{equation}\label{eq:Guevich}
 \sum_{n=1}^{\infty}c(n)e^{-nh_G(\Gamma)}\le 1,
\end{equation}
where $c(n)={\rm card}\{g_i\in\cG:|g_i|=n\}$ and $h(\cG)=\limsup\frac{1}{n}\log c(n)$ as before. 
Gurevich proved in \cite{gu} that 
the existence of a Borel probability measure on the edge shift for $\Gamma$ with entropy $h_G(\Gamma)$ is equivalent to
$\Gamma$ being positive recurrent. According to the Vere-Jones classification \cite{vj} for connected countable directed graphs this means (in our notation) that
\begin{equation}\label{eq:Vere-Jones}
  \sum_{n=1}^{\infty}c(n)e^{-nh_G(\Gamma)}=1\quad{\rm and}\quad \sum_{n=1}^{\infty}nc(n)e^{-nh_G(\Gamma)}<\infty.
\end{equation}

\begin{lemma}\label{lem:Gurevich}
  Let $X=X(\cG)$ be a coded shift such that $\cG=\{g_i: i\in \bN\}$ uniquely represents $X_{\con}$ and $h_{\con}(X)>h_{\res}(X)$. Then
  $$\sum_{i=1}^{\infty}e^{-|g_i|h_{\rm top}(X)}=1\quad{\rm and}\quad \sum_{i=1}^{\infty}|g_i|e^{-|g_i|h_{\rm top}(X)}<\infty.$$
\end{lemma}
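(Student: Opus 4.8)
The plan is to route the claim through the countable directed graph $\Gamma$ representing the sequential set, together with the Vere--Jones dichotomy recorded in \eqref{eq:Vere-Jones}. Write $f(\lambda)=\sum_{g\in\cG}\lambda^{-|g|}=\sum_{n\geq 1}c(n)\lambda^{-n}$. Since $h_\seq(X)>h_\res(X)$ we have $h_\top(X)=h_\seq(X)$, so Corollary \ref{cor:top_entropy} gives $f(\lambda_\ast)=1$ with $\lambda_\ast=\exp(h_\top(X))$. The quantity to be bounded satisfies $\sum_i|g_i|e^{-|g_i|h_\top(X)}=\sum_{n\geq1}n\,c(n)e^{-n h_\top(X)}$, which is precisely the finiteness asserted by the second relation in \eqref{eq:Vere-Jones} at the level $h_G(\Gamma)$, provided that $h_G(\Gamma)=h_\top(X)$ and that $\Gamma$ is positive recurrent. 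Proving these two facts is the whole content of the argument.

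First I would identify the Gurevich entropy with the topological entropy. We always have $h_G(\Gamma)\leq h_\top(X)$, while \eqref{eq:Guevich} reads $f(\exp(h_G(\Gamma)))=\sum_{n\geq1}c(n)e^{-n h_G(\Gamma)}\leq 1$. Because each summand of $f$ is strictly decreasing in $\lambda$, the function $f$ is strictly decreasing wherever the series converges, and $f(\lambda_\ast)=1$. Hence $f(\exp(h_G(\Gamma)))\leq 1=f(\lambda_\ast)$ forces $\exp(h_G(\Gamma))\geq\lambda_\ast$, i.e. $h_G(\Gamma)\geq h_\top(X)$; combined with the reverse inequality this yields $h_G(\Gamma)=h_\top(X)=h_\seq(X)$.

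Next I would exhibit a measure on the edge shift of $\Gamma$ attaining entropy $h_G(\Gamma)$, which by Gurevich's theorem \cite{gu} is equivalent to positive recurrence. By expansivity of $\sigma$ there is an ergodic measure of maximal entropy $\mu_{\max}$, with $h_\sigma(\mu_{\max})=h_\top(X)$. Every ergodic measure concentrates on $X_\seq$ or on $X_\res$, and one supported on $X_\res$ has entropy at most $h_\res(X)<h_\top(X)$; therefore $\mu_{\max}\in\cM_\seq$. Transporting $\mu_{\max}$ through the entropy-preserving representation of $X_\seq$ as the edge shift of $\Gamma$ yields a Borel probability measure of entropy $h_G(\Gamma)$, so $\Gamma$ is positive recurrent. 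Then the second relation in \eqref{eq:Vere-Jones} gives $\sum_{n\geq1}n\,c(n)e^{-n h_G(\Gamma)}<\infty$, and regrouping by generator length while using $h_G(\Gamma)=h_\top(X)$ produces $\sum_i|g_i|e^{-|g_i|h_\top(X)}<\infty$, as desired.

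The main obstacle is the transfer step: one must ensure that carrying $\mu_{\max}$ from $X$ to the edge shift of $\Gamma$ preserves entropy exactly (so the image genuinely attains $h_G(\Gamma)$) and that $\Gamma$ is strongly connected, so that the theorem of Gurevich and the Vere--Jones classification apply. Both rest on the representation of $X_\seq$ as the bi-infinite paths of $\Gamma$ and on the unique-representation hypothesis, which makes the labeling a bijection on $X_\seq$ (compare the conjugacy via $\pi$ in \eqref{eqYpiE}); I would invoke the framework of \cite{Pavlov, Petersen} cited just before \eqref{eq:Guevich} for these structural facts rather than reconstruct it.
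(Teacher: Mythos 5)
Your proof is correct and follows essentially the same route as the paper's: since $h_{\rm seq}(X)>h_{\rm res}(X)$, an ergodic measure of maximal entropy must assign full measure to $X_{\rm seq}$, transporting it to the edge shift of $\Gamma$ shows via Gurevich's theorem that $\Gamma$ is positive recurrent, and the second relation in \eqref{eq:Vere-Jones} then gives the finiteness after regrouping the sum by generator length. The only structural difference is that you first prove $h_G(\Gamma)=h_{\rm top}(X)$ outright, using Corollary \ref{cor:top_entropy} together with \eqref{eq:Guevich} and the strict monotonicity of $f(\lambda)=\sum_n c(n)\lambda^{-n}$, thereby showing that the subcritical case $h_G(\Gamma)<h_{\rm top}(X)$ cannot occur under the hypotheses, whereas the paper keeps that case and disposes of it separately by an analyticity (radius-of-convergence) argument.
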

\begin{proof} Consider $f(\lambda)= \sum_{n=1}^{\infty}c(n)\lambda^n$ with $c(n)={\rm card}\{g_i\in\cG:|g_i|=n\}$. Note that the radius of convergence of this power series is $e^{-h(\cG)}$ and hence $f$ is analytic on the interval $(-e^{-h(\cG)}, e^{-h(\cG)})$ as is its derivative $f'(\lambda)=\sum_{n=1}^{\infty}nc(n)\lambda^{n-1}$. 

If $h(\cG)<h_{\rm top}(X)$ then $\lambda_\ast=e^{-h_{\rm top}(X)} \in (-e^{-h(\cG))}, e^{-h(\cG))})$ and the series $\sum_{n=1}^{\infty}nc(n)e^{-nh_{\rm top}(X)}$ converges. Now consider coded shifts $X_m$ generated by $\{g\in \cG: |g|\le m\}$. It was proven in \cite[Proposition 28]{BDWY} that  whenever $h_\con(X)> h_\res(X)$ we have $\lim_{m\to\infty} h_{\rm top}(X_m)= h_{\rm top}(X)$. Since the shifts $X_m$ have finite generating sets their topological entropies satisfy

$$\sum_{n=1}^{m}c(n)e^{-nh_{\rm top}(X_m)}=1.$$ Letting $\lambda_m=e^{-h_{\rm top}(X_m)}$ we see from the above that $\lambda_m\to\lambda_\ast$ and the functions $f_m(\lambda)=\sum_{n=1}^mc(n)\lambda^n$ converge uniformly to $f(\lambda)$ in the neighborhood of $\lambda_\ast$. Since $f_m(\lambda_m)=1$, we must have $f(\lambda_\ast)=1$ as well. By the definitions of $f(\lambda)$ and $\lambda_\ast$ this means that $\sum_{n=1}^{\infty}c(n)e^{-nh_{\rm top}(X)}=1$.
If $h(\cG)=h_{\rm top}(X)$ then we must necessarily have $h_{G}(\Gamma)=h_{\rm top}(X)$. By our assumption $h_{\rm top}(X)=h_\con(X)$ and hence at least one of the measures of maximal entropy on $X$ assigns full measure to $X_\con$. Applying the correspondence between $X_\con$ and the edge shift of $\Gamma$, we see that the assumptions of Gurevich's theorem are satisfied. It now follows from (\ref{eq:Vere-Jones}) that $\sum_{n=1}^{\infty}c(n)e^{-nh_{\rm top}(X)}=1$ and $\sum_{n=1}^{\infty}nc(n)e^{-nh_{\rm top}(X)}<\infty$. 

To conclude the proof of the lemma we observe that $\sum_{i=1}^{\infty}e^{-|g_i|h_{\rm top}(X)}=\sum_{n=1}^{\infty}c(n)e^{-nh_{\rm top}(X)}$ and $\sum_{i=1}^{\infty}|g_i|e^{-|g_i|h_{\rm top}(X)}=\sum_{n=1}^{\infty}nc(n)e^{-nh_{\rm top}(X)}$.
\end{proof}
\begin{remark}
    As a consequence of Lemma \ref{lem:Gurevich}, (\ref{eq:Guevich}), and (\ref{eq:Vere-Jones}) we obtain that when $h_\con(X)>h_\res(X)$ then the topological entropy of $X$ coincides with the Gurevich entropy of the countable labeled graph associated to $X$, i.e. $$h_{\rm top}(X)=h_G(\Gamma).$$
\end{remark}

We are now ready to prove Theorem B which we restate here for convenience.

\begin{theorem}\label{thm:unique_mme}
Let $X=X(\cG)$ be a coded shift such that $\cG$ uniquely represents $X_{\con}$ and $h_{\con}(X)>h_{\res}(X)$.
Then $X$ has a unique measure of maximal entropy $\mu_{\max}$,  and $\mu_{\max}$  is
$\cG$-Bernoulli with $p_g=\exp(-|g|h_\top(X))$ and $c=\sum_{g\in \cG} |g| \exp(-|g|h_\top(X))$.
\end{theorem}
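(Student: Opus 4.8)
The plan is to recast the maximization of entropy over the sequential measures as the computation of an equilibrium state on the full countable shift $(Y,\sigma)$, and then to read off the maximizer through the bijection of Lemma \ref{lem:lift}. First I would note that $h_\top(X)=h_\seq(X)$ since $h_\seq(X)>h_\res(X)$. As every ergodic measure of maximal entropy is carried by $X_\seq$ or by $X_\res$, and the latter would force entropy at most $h_\res(X)<h_\top(X)$, every ergodic measure of maximal entropy lies in $\cM_\seq$; by affinity of entropy and the ergodic decomposition the same holds for every measure of maximal entropy. It thus suffices to maximize $h_\sigma(\mu)$ over $\mu\in\cM_\seq$. For such $\mu$ set $m=\bi(\mu)\circ\pi\in\cM_\sigma(Y)$, the measure matching $\bi(\mu)\in\cM_{\rm ind}$ under the conjugacy \eqref{eqYpiE}. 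Since $\sigma_E=\sigma^{r_E}$ with $r_E\circ\pi(y)=|g_{y_0}|$, Abramov's formula gives $h_\sigma(\mu)=\mu(E)\,h_{\sigma_E}(\bi(\mu))=\mu(E)\,h_\sigma(m)$ (the last step by the conjugacy $\pi$), and Kac's formula gives $\int_Y|g_{y_0}|\,dm=1/\mu(E)$, where $\mu(E)>0$ by \eqref{eq:mu(X)}. Introducing the potential $\psi(y)=-h_\top(X)\,|g_{y_0}|$, which is constant on one-cylinders and therefore H\"older in the sense of \cite{MU}, one has $\int_Y\psi\,dm=-h_\top(X)/\mu(E)$, and hence
\begin{equation*}
h_\sigma(m)+\int_Y\psi\,dm=\frac{1}{\mu(E)}\bigl(h_\sigma(\mu)-h_\top(X)\bigr).
\end{equation*}
Thus $\mu$ maximizes the entropy in $\cM_\seq$ exactly when $m$ is an equilibrium state for $\psi$.

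Next I would verify the Mauldin--Urba\'nski hypotheses for $\psi$. Since $\psi_{[i]}\equiv -h_\top(X)|g_i|$, Corollary \ref{cor:top_entropy} gives $\sum_i\exp(\sup\psi_{[i]})=\sum_{g\in\cG}e^{-h_\top(X)|g|}=1$, which is \eqref{MU1}; because $Y$ is the full shift this also yields $Z_n(\psi)=1$ for all $n$ and hence $P_{\rm top}(Y,\psi)=0$. Condition \eqref{MU2} becomes $h_\top(X)\sum_i|g_i|e^{-h_\top(X)|g_i|}<\infty$, which is exactly Lemma \ref{lem:Gurevich}. Therefore $\psi$ has a unique equilibrium state $m^\ast$, which is Gibbs. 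Moreover, writing $p_{g_i}:=e^{-h_\top(X)|g_i|}$, the Bernoulli measure $b$ with $b([i])=p_{g_i}$ satisfies $h_\sigma(b)+\int\psi\,db=0=P_{\rm top}(Y,\psi)$, so $m^\ast=b$ by uniqueness. The variational principle for the countable shift gives $h_\sigma(m)+\int\psi\,dm\le 0$ for every $m$ with $\int\psi\,dm>-\infty$, with equality only at $m=m^\ast$; through the displayed identity this reads $h_\sigma(\mu)\le h_\top(X)$ on $\cM_\seq$, with equality iff $\bi(\mu)\circ\pi=m^\ast$. Since $\int_Y|g_{y_0}|\,dm^\ast=\sum_i|g_i|p_{g_i}=c<\infty$ by Lemma \ref{lem:Gurevich}, the measure $\pi_\ast m^\ast$ lies in $\cM_{\rm ind}$, so $\mu_{\max}\eqdef\bl(\pi_\ast m^\ast)\in\cM_\seq$ is well defined, attains $h_\sigma(\mu_{\max})=h_\top(X)$, and is the unique maximizer; hence it is the unique measure of maximal entropy.

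To extract the explicit $\cG$-Bernoulli formula I would substitute $\mu=\pi_\ast m^\ast$, for which $\mu(\llb g_i\rrb)=p_{g_i}$ and $\sum_i|g_i|\mu(\llb g_i\rrb)=c$, into the lift formula of Lemma \ref{lem:lift}:
\begin{equation*}
\mu_{\max}(\llb g_0\cdots g_k\rrb)=\frac1c\sum_{i=1}^\infty\sum_{j=0}^{|g_i|-1}\mu\bigl(\llb g_i\rrb\cap\sigma^{-j}(\llb g_0\cdots g_k\rrb)\bigr).
\end{equation*}
Unique representability (Lemma \ref{lem:partition}) makes every intersection empty except when $j=0$ and $g_i=g_0$, in which case it equals $\llb g_0\cdots g_k\rrb$ with $\mu$-measure $p_{g_0}\cdots p_{g_k}$. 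This gives $\mu_{\max}(\llb g_0\cdots g_k\rrb)=\tfrac1c\,p_{g_0}\cdots p_{g_k}$, the asserted $\cG$-Bernoulli property with $p_g=\exp(-|g|h_\top(X))$ and $c=\sum_{g\in\cG}|g|\exp(-|g|h_\top(X))$.

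The hard part will be the induced-system bookkeeping: invoking Abramov's and Kac's formulas for the first-return system $(E,\sigma_E)$ and transporting the uniqueness and variational statements of the countable-alphabet formalism \cite{MU,Sarig} across the conjugacy $\pi$ and then back through $\bi$ and $\bl$ to $\cM_\seq$, all while tracking the finiteness conditions that make $m^\ast$ correspond to an honest probability measure on $X$. The two analytic inputs \eqref{MU1}--\eqref{MU2} are supplied verbatim by Corollary \ref{cor:top_entropy} and Lemma \ref{lem:Gurevich}, so once the correspondence is set up carefully, the only remaining computation---the vanishing of the intersections in the last display---is routine.
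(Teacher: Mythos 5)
Your proposal is correct and takes essentially the same route as the paper: reduce to $\cM_\seq$, pass through the induced system $(E,\sigma_E)\cong(Y,\sigma)$ with the potential $\psi(y)=-|g_{y_0}|h_\top(X)$, verify the Mauldin--Urba\'nski conditions \eqref{MU1}--\eqref{MU2} via Corollary \ref{cor:top_entropy} and Lemma \ref{lem:Gurevich}, identify the unique equilibrium state with the Bernoulli measure given by $p_{g_i}=e^{-|g_i|h_\top(X)}$, and lift through Lemma \ref{lem:lift} to obtain the $\cG$-Bernoulli formula. The only place you diverge is in establishing $P_{\rm top}(Y,\psi)=0$: you exploit the exact factorization $Z_n(\psi)=\bigl(\sum_{g\in\cG}e^{-|g|h_\top(X)}\bigr)^n=1$, which is legitimate since $\psi$ is constant on one-cylinders of a full shift, whereas the paper proves $P\geq 0$ using an existing measure of maximal entropy and $P\leq 0$ by approximating with the finite-alphabet subshifts $Y_m$ and lifting their equilibrium states back to $X$ --- a harmless, slightly streamlined shortcut.
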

\begin{proof}
  Consider the constant potential $\phi: X \rightarrow \bR$ given by $\phi(x)=-h_\top(X)$,
so that $P_\top(\phi)=0$. Let $\mu$ be a measure of maximal entropy of $X$. Then $\mu$ is an equilibrium state of $\phi$ and, since $h_{\con}(X)>h_{\res}(X)$, we know that $\mu\in\cM_\con$ and $\bi(\mu)$ is well defined. The corresponding induced potential $\phi_E:E\to\bR$ has the form
$$\phi_{E}(x)=\sum_{j=0}^{r_E(x)-1}\phi\circ \sigma^j(x).$$
Note that in our notation (\ref{eq:def_pi}) the first return time $r_E(x)=|g_{y_0}|$ where $y=\pi^{-1}(x)$. In particular, when $x\in\llb g_i\rrb$ we have $r_E(x)=|g_i|$. Define the potential $\psi:Y\to\bR$ by $\psi=\phi_E\circ \pi$, Hence, $\psi(y)=-|g_{y_0}|h_{\rm top}(X)$. We first verify that if $\mu$ is a measure of maximal entropy for $X$, then $\nu=(\pi^{-1})_\ast \bi(\mu)$ is an equilibrium state for $\psi$. Since any invariant measure admits an ergodic decomposition we may assume that $\mu$ is ergodic and hence assigns full measure to $X_\con$.
It immediately follows from Lemma \ref{lem:partition} that
$$\int_X\phi\,d\mu=\sum_{i=1}^{\infty}\sum_{j=0}^{|g_i|-1}\int_{\sigma^j\llb g_i\rrb}\phi\,d\mu=\sum_{i=0}^{\infty}\int_{\llb g_i\rrb}\sum_{j=0}^{|g_i|-1}\phi\circ \sigma^j\,d\mu=\mu(E)\int_E\phi_E\,d\bi(\mu).$$
Since by Abramov's formula we also have $h_\sigma(\mu)=\mu(E)h_{\sigma_E}(\bi(\mu))$, using the properties of $\pi$  we obtain that
\begin{align*}
  h_\sigma(\nu)+\int_Y\psi\,d\nu & = h_{\sigma_E}(\bi(\mu))+\int_E\phi_E\,d\bi(\mu)\\
  & = \frac{1}{\mu(E)}\left[h_\sigma(\mu)+\int_X\phi\,d\mu\right] \\
  & =0.
\end{align*}
Hence, $P_{\rm top}(\psi)\ge 0$. To show the opposite inequality we use the result from \cite{MU} that $P_{\rm top}(\psi)=\sup_{m\in\bN} P_{\rm top}(\psi,Y_m)$, where $P_{\rm top}(\psi,Y_m)$ is the topological pressure of $\psi$ for the dynamical system $(Y_m,\sigma)$ with $Y_m=\{1,...,m\}^\bZ$. Denote by $\nu_m$ the unique equilibrium state of $\psi$ on $Y_m$ and let $\pi_\ast\nu_m$ be its pushforward measure on $E$. Then $$\sum_{i=1}^{\infty}|g_i|\pi_\ast\nu_m(\llb g_i\rrb)=\sum_{i=1}^{m}|g_i|\nu_{m}([i])<\infty,$$ which assures that $\pi_\ast(\nu_m)\in \cM_{\rm ind}$. By Lemma \ref{lem:lift} the measure $\mu_m=\bl(\pi_\ast\nu_m)$ is a $\sigma$-invariant probability measure on $X$ for which $h_{\sigma_E}(\pi_\ast\nu_m)=\frac{h_\sigma(\mu_m)}{\mu_m(E)}$ and $\pi_\ast \nu_m(\llb g_i\rrb)=\frac{\mu_m(\llb g_i\rrb)}{\mu_m(E)}$. Keeping in mind that $\psi(y)=-|g_{y_0}|h_{\rm top}(X)$ and that $\pi_\ast$ preserves the entropy of $\nu_m$ we evaluate
\begin{align*}
   P_{\rm top}(\psi|_{Y_m})& =h_\sigma(\nu_m)+\int_Y \psi\,d\nu_m \\
   & =  h_{\sigma}(\nu_m)-h_{\rm top}(X)\int_Y |g_{y_0}|\,d \nu_m \\
   & =  h_{\sigma}(\nu_m)-h_{\rm top}(X)\sum_{i=1}^{m}|g_i|\nu_m([i]) \\
  & =  h_{\sigma_E}(\pi_\ast\nu_m)-h_{\rm top}(X)\sum_{i=1}^{m}|g_i|\pi_\ast\nu_m(\llb g_i\rrb)\\
  & = \frac{1}{\mu_m(E)}\big(h_\sigma(\mu_m)-h_{\rm top}(X)\big),
\end{align*}
where in the last line we use the fact that $\sum_{i=1}^{m}|g_i|\mu_m(\llb g_i\rrb)=\mu_m(X)=1$.
Since $\mu_m$ is an invariant probability measure on $X$, the variational principle implies $h_\sigma(\mu_m)-h_{\rm top}(X)\le 0$. Taking the supremum over $m\in\bN$ we conclude that $P_{\rm top}(\psi)=0$.

To establish the uniqueness of the measure of maximal entropy it remains to be shown that for the potential $\psi(y)=-|g_{y_0}|h_{\rm top}(X)$ the series (\ref{MU1}) and (\ref{MU2}) converge. Since $\psi(y)=-|g_i|h_{\rm top}(X)$ for all $y\in [i]$, these series are 
$$ \sum_{i=1}^{\infty}e^{-|g_i|h_{\rm top}(X)}\quad{\rm and}\quad\sum_{i=1}^{\infty}|g_i|e^{-|g_i|h_{\rm top}(X)}.$$ 
The second sum clearly dominates the first. The proof that the second sum is finite is given in Lemma \ref{lem:Gurevich}. This shows that the measure of maximal entropy for $X$ is unique and we denote it by $\mu_{\rm max}$.

Next we show that $\mu_{\rm max}$ is $\cG$-Bernoulli. Recall that $h_\con(X)>h_\res(X)$. Therefore, Lemma \ref{lem:Gurevich} asserts that $h_{\rm top}(X)=\log \lambda_\ast$ where $\sum_{i=1}^{\infty}\lambda_\ast^{-|g_i|}=1$. Hence, $\sum_{i=1}^{\infty}\exp(-|g_i|h_{\rm top}(X))=1$ and we can consider the Bernoulli measure $\nu$ on $Y$ which assigns the mass $p_i=\exp(-|g_i|h_{\rm top}(X))$ to the cylinder $[i]$. It is straight forward to check that $\nu$ is an equilibrium state for $\psi$, and hence, by uniqueness, $\mu_{\rm max}$ coincides with the lift of $\pi_\ast\nu$. Applying the formula for the lift given in Lemma \ref{lem:lift} we see that $\mu_{\rm max}(\llb g_{i_1}...g_{i_k}\rrb )=\frac{1}{c}p_{i_1}\cdots p_{i_k}$ where $c=\sum_{i=1}^{\infty}|g_i|\pi_\ast\nu(\llb g_i\rrb)=\sum_{i=1}^{\infty}|g_i|p_i=\sum_{i=1}^{\infty}|g_i|\exp(-|g_i|h_{\rm top}(X))$.
\end{proof}

We remark that in the last part of the proof we actually show that conditions $\sum_{i=1}^\infty e^{-|g_i|h_{\rm top}(X)}=1$ and $\sum_{i=1}^\infty|g_i|e^{-|g_i|h_{\rm top}(X)}<\infty$ imply the existence of a measure of maximal entropy $\mu\in\cM_\con$. 
Moreover, $\mu$ is the only such measure in $\cM_\con$ and it is ergodic and $\cG$-Bernoulli. The condition $h_\con(X)>h_\res(X)$ is required to guarantee that there are no measures of maximal entropy which assign full measure to $X_\res$. We obtain

\begin{corollary}\label{cor:h_seq=h_res} Suppose $X$ is a coded shift with generating set $\cG$ which uniquely represents $X_\con$. Then the following are equivalent.
\begin{itemize}
  \item[(i)] There exists a measure of maximal entropy $\mu$ with $\mu(X_\con)=1$.
  \item[(ii)] $\sum_{g\in\cG}e^{-|g|h_{\rm top}(X)}=1$ and $\sum_{g\in\cG}|g|e^{-|g|h_{\rm top}(X)}<\infty$
\end{itemize}
In this case $\mu$ is unique in $\cM_\con$, and is $\cG$-Bernoulli with $p_g=e^{-|g|h_{\rm top}(X)}$ and $c=\sum_{g\in \cG} |g| e^{-|g|h_\top(X)}$.
\end{corollary}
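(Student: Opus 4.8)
The plan is to re-use the apparatus assembled for Theorem~\ref{thm:unique_mme}, only replacing the hypothesis $h_\seq(X)>h_\res(X)$ by the explicit summability in~(ii). Fix the enumeration $\cG=\{g_i\}_{i\in\bN}$ and, exactly as there, work with the countable full shift $Y=\bN^\bZ$, the conjugacy $\pi$, and the potential $\psi(y)=-|g_{y_0}|h_\top(X)$. Since $\psi$ is constant on each one-cylinder, $\sup\psi_{[i]}=\inf\psi_{[i]}=-|g_i|h_\top(X)$, so the Mauldin--Urbanski conditions \eqref{MU1} and \eqref{MU2} translate verbatim into $\sum_i e^{-|g_i|h_\top(X)}<\infty$ and $\sum_i|g_i|e^{-|g_i|h_\top(X)}<\infty$. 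I would first record one fact that needs no extra hypothesis: the pressure upper bound $P_\top(\psi)\le 0$. Indeed the computation in the proof of Theorem~\ref{thm:unique_mme} gives $P_\top(\psi\vert_{Y_m})=\mu_m(E)^{-1}(h_\sigma(\mu_m)-h_\top(X))\le 0$ by the variational principle on $X$, and $P_\top(\psi)=\sup_m P_\top(\psi\vert_{Y_m})$.

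For the implication (ii)$\Rightarrow$(i) I would set $p_{g_i}=e^{-|g_i|h_\top(X)}$; the first identity in~(ii) says $(p_{g_i})$ is a probability vector and the second says $c=\sum_i|g_i|p_{g_i}<\infty$, so the product measure $\nu$ on $Y$ with these weights has $\pi_\ast\nu\in\cM_{\rm ind}$. A short calculation gives $h_\sigma(\nu)=-\sum_i p_{g_i}\log p_{g_i}=c\,h_\top(X)$ and $\int\psi\,d\nu=-c\,h_\top(X)$, so $h_\sigma(\nu)+\int\psi\,d\nu=0$; with the upper bound this yields $P_\top(\psi)=0$ and makes $\nu$ an equilibrium state. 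Lifting through Lemma~\ref{lem:lift} produces $\mu=\bl(\pi_\ast\nu)\in\cM_\seq$, and \eqref{eq:mu(X)} forces $\mu(E)=1/c$; Abramov's formula and the conjugacy then give $h_\sigma(\mu)=\mu(E)\,h_\sigma(\nu)=h_\top(X)$, so $\mu$ is a measure of maximal entropy charging $X_\seq$. The explicit lift formula evaluates $\mu(\llb g_{i_0}\cdots g_{i_k}\rrb)=\frac{1}{c}p_{g_{i_0}}\cdots p_{g_{i_k}}$, which is the asserted $\cG$-Bernoulli description. All of this is already contained in the final part of the proof of Theorem~\ref{thm:unique_mme}, so I would simply quote it.

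The genuinely new direction is (i)$\Rightarrow$(ii), and this is where I expect the main difficulty: absent the inequality $h_\seq(X)>h_\res(X)$, Corollary~\ref{cor:top_entropy} is unavailable, so the \emph{equality} $\sum_g e^{-|g|h_\top(X)}=1$ (rather than mere finiteness) cannot be read off from the entropy formula. To recover it I would argue through the countable directed graph $\Gamma$ representing $X_\seq$, as in Lemma~\ref{lem:Gurevich}. A measure of maximal entropy $\mu\in\cM_\seq$ corresponds, under the identification of $X_\seq$ with the edge shift of $\Gamma$, to an invariant measure of entropy $h_\top(X)$; since $h_G(\Gamma)\le h_\top(X)$ always holds, this both forces $h_G(\Gamma)=h_\top(X)$ and exhibits a measure attaining the Gurevich entropy. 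By Gurevich's theorem \cite{gu} the graph $\Gamma$ is therefore positive recurrent, and the Vere--Jones classification \eqref{eq:Vere-Jones} delivers $\sum_n c(n)e^{-nh_\top(X)}=1$ together with $\sum_n n\,c(n)e^{-nh_\top(X)}<\infty$; regrouping these sums by generator length is precisely condition~(ii).

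Finally, for the uniqueness clause I would note that once (ii) holds, \eqref{MU1} and \eqref{MU2} are satisfied, so by \cite{MU} the potential $\psi$ admits a \emph{unique} equilibrium state on $Y$, which must be the product measure $\nu$ from above. Any measure of maximal entropy $\mu'\in\cM_\seq$ is sent by the injection $(\pi^{-1})_\ast\circ\bi$ to a measure $\nu'$ with $h_\sigma(\nu')+\int\psi\,d\nu'=\mu'(E)^{-1}(h_\sigma(\mu')-h_\top(X))=0=P_\top(\psi)$, hence to an equilibrium state; uniqueness gives $\nu'=\nu$, and injectivity gives $\mu'=\mu$. This pins down $\mu$ as the unique element of $\cM_\seq$ of maximal entropy and completes the $\cG$-Bernoulli description.
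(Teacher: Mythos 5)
Your proposal is correct and follows essentially the same route as the paper: the implication (ii)$\Rightarrow$(i), the uniqueness in $\cM_\seq$, and the $\cG$-Bernoulli formula are exactly the final part of the proof of Theorem~\ref{thm:unique_mme} (whose pressure bound $P_{\rm top}(\psi)\le 0$ indeed never uses $h_\seq(X)>h_\res(X)$), which is what the paper's remark preceding the corollary invokes. Your (i)$\Rightarrow$(ii) argument via $h_G(\Gamma)=h_{\rm top}(X)$, Gurevich's theorem, and the Vere--Jones conditions \eqref{eq:Vere-Jones} is precisely the second case in the proof of Lemma~\ref{lem:Gurevich}, run directly from hypothesis (i) instead of deducing it from the entropy inequality, so you have merely made explicit a direction the paper leaves implicit.
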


This corollary supplies a characterization of the measure of maximal entropy from $\cM_\con$ in the case when $h_\con(X)=h_\res(X)$. Moreover, it could be used even in the case $h_\con(X)<h_\res(X)$ by employing another generating set specifically tailored to a particular measure of maximal entropy. We illustrate this technique on the Dyck shift in the Example \ref{ex:Dyck}.

Assume that $X=X(\cG)$ is uniquely represented and $h_{\con}(X)> h_{\res}(X)$. We have shown
that $X$ has a {\em unique} measure of maximal entropy $\mu_{\max}$. We now provide an
example which shows that this measure is, in general not a Gibbs measure.
\begin{example}\label{ex:notGibbs} There exists an intrinsically ergodic coded shift $X(\cG)$ whose measure of maximal entropy is $\cG$-Bernoulli but not Gibbs.
\end{example}

\begin{proof}
  We start with a strictly increasing sequence of positive integers $\{n_i\}_{i\in \bN}$, so that
$\lim_{i \to \infty} (n_{i+1}-n_i) =\infty$. Now let $\cG=\{g_i :\ g_i = 0^{n_i}1^{n_i}  \}$ and consider $X=X(\cG)$.
Clearly $\cG$ uniquely represents $X_{\con}$ and
\[ 0=h_{\res}(X)< h_{\con}(X)=h_{\top}(X)< \log 2.\]
We know from Theorem \ref{thm:unique_mme} that $X$ has a unique measure of maximal entropy $\mu_{\max}$ which is $\cG$-Bernoulli and consequently
\[\mu_{\max}\left(\llb g_{i} \rrb\right)=\frac{1}{c}\exp(-|g_{i}| h_{\top}(X))= \frac{1}{c}\exp(-2 n_i h_{\top}(X)), \]
with a normalizing constant $c$. Now for $i \in \bN$ consider the words $w_{i}=0^{n_i+1}1^{n_i+1}$,
which are subwords of every $g_{j}$ for all $j>i$. Moreover, there is a unique $0\le k_j<|g_j|$ such that $[w_i]\cap \sigma^{k_j}\llb g_j\rrb \ne \emptyset$. Since $\mu_{\rm max}$ assigns full measure to $X_\con$ by Lemma \ref{lem:partition} we obtain

\begin{align*}
  \frac{\mu_{\max}\left([w_{i}]\right)}{\exp(-2 (n_i +1)  h_{\top}(X))} & =
\frac{1}{\exp(-2 (n_i +1)  h_{\top}(X))}\sum_{j>i}\mu_{\max}\left(\llb g_{j}\rrb\right) \\
  & =\frac{1}{c} \exp(2 (n_i +1)  h_{\top}(X))\sum_{j>i}\exp(-2 n_j h_{\top}(X))\\
  & =\frac{1}{c} \sum_{j>i}\exp(-2 (n_j-n_i -1)  h_{\top}(X)) 
\end{align*}
Since by assumption $\{n_i \}$ is monotonically increasing with $(n_{i+1}-n_i) \rightarrow \infty$ as $i \rightarrow \infty$, the  last sum can be made as small as we wish. This violates the lower bound needed for the Gibbs property, hence
$\mu_{\max}$ is not Gibbs.
\end{proof}

The following example example establishes the existence of a coded shift $X=X(\cG)$ which has multiple measures of maximal entropy and satisfies $h_{\con}(X)=h_{\res}(X)$.
A similar example (with fewer properties) was given in
\cite{Pavlov}[Example 5.3]

\begin{example} \label{ex:hseq=hres} There exists a coded shift $X=X(\cG)$ such that $\cG$ uniquely represents $X_\con$, $h_{\con}(X)=h_{\res}(X)$, and $X$ has precisely three ergodic measures of maximal entropy. Two of these ergodic measures of maximal entropy are supported on disjoint proper  subshifts $Z_1,Z_2\subset X$. The third ergodic measure of maximal entropy is a $\cG$-Bernoulli measure on $X_{\rm con}$.
\end{example}
\begin{proof}
    Let $\cA=\{0,1,2,3,4\}.$ We will construct a coded shift $X$ over the alphabet $\cA$ which has the desired properties. 
For $n \in \bN$ we define $$W(n)=\{vw4^n:\ v \in \{0,1\}^n, w \in \{2,3\}^n\} \subset \cA^{3n}.$$ 
Further, we define $\cG=\bigcup_{n \in \bN}W(n)$ and $W_n=\bigcup^n_{k=1}W(k)$. Let $X=X(\cG)$ and $X_n=X(W_n)$. Clearly, $\cG$ uniquely represents $X_{\con}$.   

It follows from the definitions of $X_{\rm res}$ and $X_{\rm lim}$ that the disjoint shift spaces $Z_1=\{0,1\}^{\bZ}$, $Z_2=\{2,3\}^{\bZ}$, and $Z_3=\{4\}^{\bZ}$ are contained in $X_{\rm res}\cap X_{\lim}$. Let $x\in X_{\lim}\setminus (Z_1\cup Z_2\cup Z_3)$. Then $x$ must have precisely one 
transition index between rays in $Z_1,Z_2$ or $Z_2,Z_3$. More precisely, there is $i\in\{1,2\}$ and $k\in \bZ$ such that $x=x(-\infty,k)x[k,\infty)=uv$, where $u$ is a ray in $Z_i$ and $v$ is a ray in $Z_{i+1}$.
Let $\mu \in \cM_\sigma(X)$ be ergodic with $\mu(X_{\res})=1$. 
It follows from standard arguments (partitioning into sets with the same transition index $k$ and using  $\sigma$-invariance) that $\mu(X_{\lim}\setminus (Z_1\cup Z_2\cup Z_3))=0$. Using that by \cite[Lemma 4.1]{Pavlov}  every 
invariant measure assigns full measure to $X_{\con} \cup X_{\lim}$
we deduce that either $\mu(Z_1)=1$ or  $\mu(Z_2)=1$ or $\mu(Z_3)=1$ holds. 
 We conclude that there are precisely two ergodic invariant entropy maximizing probability measures among all invariant probability measures which assign full measure to $X_{\res}$. These are the $(\frac12,\frac12)$-Bernoulli measures $\mu_1$ and $\mu_2$ on 
$Z_1$ and $Z_2$, respectively. Both $\mu_1$ and $\mu_2$ have measure-theoretic entropy $\log 2$. This shows that $h_{\res}(X)=\log 2$.

Next we show that $h_{\con}(X)=\log2$. It follows from Lemma \ref{lem:entropy_formula} that $h_{\top}(X_n)=\log \lambda_n$, where $\lambda_n$ is the unique solution of the equation
$$ \sum^n_{k=1}2^{2k} \lambda^{-3k}=1.
$$ 
Moreover, $\lambda_n \uparrow \lambda_*$ as $n\to \infty$, where $\lambda_*$ is the unique solution of 
 \begin{equation}\label{probability} \sum^{\infty}_{k=1}2^{2k} \lambda^{-3k}=1.
\end{equation}
 A simple calculation shows that $\lambda_*=2$. Hence, $\lim_{n \to \infty} h_{\top}(X_n)=\log 2$ which implies $h_{\con}(X)\geq \log 2$. 
Assume that $h_{\con}(X) > \log 2$. Then  \cite[Proposition 28]{BDWY} 
leads to the following contradiction:
$$\log 2 = \lim_{n \to \infty} h_{\top}(X_n)=h_{\con}(X)>\log 2.$$
We conclude $h_{\rm top}(X)=h_{\rm con}(X)=h_{\rm res}(X)=\log 2$. In particular, $\mu_1$ and $\mu_2$ are the only ergodic measures of maximal entropy which assign full measure to $X_{\rm res}$.
It remains to consider the possibility of ergodic measures of maximal entropy which assign full measure to $X_{\con}$. 
For all $g\in \cG$ we assign $p_g=e^{-|g| \log 2}$ to the $\cG$-cylinder $\llb g\rrb$. 
We define a measure $\nu_3$ on $E$ by $\nu_3 (\llb g_{0} \cdots g_{k}\rrb)=p_{g_{0}}\cdots p_{g_{k}}$ for all $g_0,\cdots,g_k\in \cG$. Since $h_{\rm con}(X)=h_{\rm res}(X)$ we can not directly apply Theorem B. 
Instead we will show directly that the lift $\mu_3$ of $\nu_3$, that is, $\mu_3=\bl \nu_3$ is an ergodic measure of maximal entropy.

We claim that $\nu_3$ is an invariant probability measure on $E$ 
which can be lifted to $X_{\con}$. Indeed, since $|W(n)|=2^{2n}$ we obtain
$$\nu_3(E)=\sum_{g \in G} \nu_3(\llb g \rrb)=\sum_{g \in G}e^{-|g| \log 2}=\sum^{\infty}_{n=1}2^{2n}e^{-3n \log 2}=\sum_{n=1}^\infty 2^{-n}=1.$$ 
Using \eqref{eqYpiE} we observe that $\nu_3$ is measure-theoretic isomorphic to a Bernoulli measure on the two-sided countable alphabet full-shift $Y$. This shows that $\nu_3$ is shift invariant. Next we show that $\nu_3$ can be lifted to the measure $\mu_3$ 
on $X$ by showing that the normalizing constant $c$ is finite. To see this we observe that the number of elements in $\cG$ with length $3n$ is $2^{2n}$ and conclude that
$$c=\sum_{g\in \cG}|g| p_g=\sum^{\infty}_{n=1}3n\cdot 2^{2n}\cdot e^{-3n \log 2}=3\sum^{\infty}_{n=1}n 2^{-n}=6.$$ 
Hence $\mu_3$ is well-defined and $\mu_3(E)=c^{-1}=1/6$. 
 Next, we calculate the measure-theoretic entropy $h_{\sigma_E}(\nu_3)$. By applying the fact that $\nu_3$ is (up to conjugacy) a Bernoulli measure we obtain
\begin{align*}
h_{\sigma_E}(\nu_3)&=  -\sum_{g\in \cG} \nu_3(\llb g\rrb) \log(\nu_3 (\llb g\rrb )\\
&= -\sum_{g\in \cG} e^{-|g|\log 2} \log e^{-|g|\log 2}\\
&= \sum_{n=1}^{\infty} 2^{2n} 2^{-3n}3n \log 2\\
&= 3 \log 2 \sum_{n=1}^\infty n 2^{-n}= 6 \log 2.
\end{align*}
Since $\mu_3(E)=\frac16$ Abramov's formula yields 
$h_\sigma(\mu_3)=\log 2$. We conclude that $\mu_3$ is an ergodic measure of maximal entropy on $X$ and is $\cG$-Bernoulli. Having established the existence of a  measure of maximal entropy that assigns full measure to $X_{\rm con}$ its uniqueness follows from Corollary \ref{cor:h_seq=h_res}. This completes the proof.
\end{proof}

Next we consider beta shifts and analyze the corresponding  measures of maximal entropy. Our goal is to apply Corollary \ref{cor:h_seq=h_res} to derive an explicit formula for the  measure of maximal entropy. We briefly review some basic facts about beta shifts, see, e.g., \cite{BDWY,ClTh} and the references therein for more details.
Let $\beta>1$ be a real number. The {\em beta shift} $X_{\beta}$ is the natural coding space associated with the $\beta$-transformation $f_{\beta}:[0,1)\rightarrow [0,1)$ defined  by
$f_{\beta}(x)=\beta x\,\, (\text{mod } 1)$. 
If $\beta\in\bN$, then $X_\beta$ is the full shift with $\beta$ symbols.  Therefore, we may assume that $\beta$ is not an integer. 

One method to define $X_\beta$ is as  the set of paths on the countable directed labeled graph $\Gamma_\beta$. We briefly review this method.
Given $x\geq 0$ let $\lfloor x\rfloor$ denote the integer part of $x$. The beta shift $X_\beta$ is a one-sided subshift over the  alphabet $\cA=\{0,\dots,\lfloor \beta \rfloor\}$. We denote the lexicographic order on one-sided shift spaces  by $\preceq$. 
There exists a unique sequence $b=b(\beta)=(b_1b_2b_3\cdots)$ which is the lexicographic supremum over all solutions of
\begin{equation}\label{eqbeta}
\sum_{n=1}^\infty b_n\beta^{-n}<1.
\end{equation}
We note that $X_{\beta}$ is sofic if and only if $b$ is eventually periodic, see e.g. \cite{RJ}.
Thus, we may assume that $b$ is not eventually periodic. In this case $b$ could be defined by considering \eqref{eqbeta} as equality.
  The Beta shift is characterized by the following condition:
\begin{equation*}
x\in X_\beta \quad \Longleftrightarrow \quad \sigma^n(x) \preceq b(\beta)\,\, {\rm for}\,\,{\rm all}\,\, n\in \bN_0
\end{equation*}
Every beta shift can be presented as a countable directed labeled graph $\Gamma_\beta$ which is completely determined by $b=b(\beta)$.
We denote the vertices of  $\Gamma_\beta$ by $v_n$  for $n\in \bN$. For each $n\in \bN$, we add an edge from $v_n$ to $v_{n+1}$ and label it with $b_n$. 
Moreover, for all $n\in \bN$ and $i=0,\dots, b_n-1$, we add an edge  from $v_n$ to $v_1$ and label it with $i$. The beta shift $X_\beta$ coincides with the  set of  sequences of labels  associated with the set of infinite paths on $\Gamma_\beta$ that start at $v_1$.
An example of such a graph $\Gamma_\beta$ is depicted in Figure \ref{fig:beta}. 

\begin{figure}[hbt]
\begin{center}
\begin{tikzpicture}
\clip (-2.5,-1) rectangle (11.5,3);
\node [draw,circle,inner sep=0pt,minimum size=.6cm] (A) at (0,0) {$v_1$};
\node [draw,circle,inner sep=0pt,minimum size=.6cm] (B) at (2,0) {$v_2$};
\node [draw,circle,inner sep=0pt,minimum size=.6cm] (C) at (4,0) {$v_3$};
\node [draw,circle,inner sep=0pt,minimum size=.6cm] (D) at (6,0) {$v_4$};
\node [draw,circle,inner sep=0pt,minimum size=.6cm] (E) at (8,0) {$v_5$};
\node at (9.8,0) (F) {};
\draw (E) edge[->] node[below]{$1$} (F);
\draw (A) edge[->] node[below]{$2$} (B);
\draw (B) edge[->] node[below]{$2$} (C);
\draw (C) edge[->] node[below]{$0$} (D);
\draw (D) edge[->] node[below]{$1$} (E);
\draw (B) edge[->,in=20,out=160] node[above]{0} (A);
\draw (B) edge[->,in=75,out=105] node[above]{1} (A);
\draw (D) edge[->,in=75,out=105] node[above]{0} (A);
\draw (E) edge[->,in=80,out=100] node[above]{0} (A);
\draw (A) edge[->,loop left,looseness = 15,in=160,out=200] node[left]{0} (A);
\draw (A) edge[->,loop left,looseness = 20,in=140,out=220] node[left]{1} (A);
\end{tikzpicture}
\end{center}
\caption{A graph representation of a beta shift.\label{fig:beta}}
\end{figure}
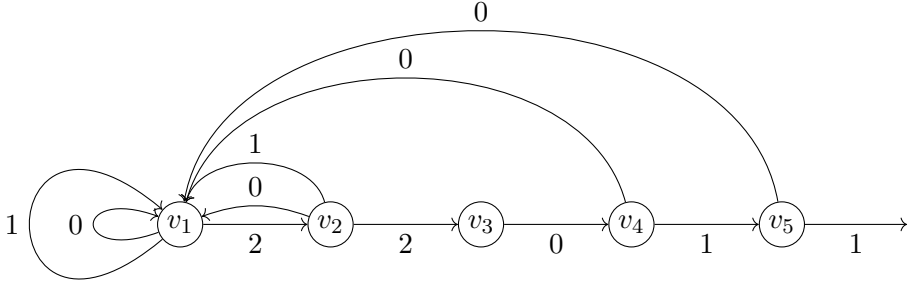

Since $X_\beta$ is a one-sided shift space, we can not directly apply our results for coded shifts.  We can, however, use the graph $\Gamma_\beta$ as a representation to associate to $X_\beta$ a  two-sided shift space $\widehat{X}_\beta$ given by all bi-infinite paths on $\Gamma_\beta$.  It follows that $\widehat{X}_\beta$
defined as the closure of the set of bi-infinite sequences of labels associated with  bi-infinite paths on $\Gamma_\beta$ is a two-sided shift space which has  the same language as $X_\beta$. It is shown in \cite{BDWY} that $\widehat{X}_\beta$ is a coded shift with generating set
\begin{equation*}
\mathcal{G_\beta}=\{g(j,i)=b_1\cdots b_{j}i: i<b_{j+1}\}\cup \{i:i<b_1\}
\end{equation*}
which uniquely represents $\widehat{X}_{\beta, {\rm con}}$. There is a natural measure-theoretic entropy preserving bijection $I:\cM_\sigma(\widehat{X}_\beta)\to \cM_\sigma(X_\beta)$ defined by $I(\mu)= \mu^+$ where $\mu^+([w])=\mu([w])$ for all $w\in \cL(X_\beta)$. Recall that $h_{\rm top}(X_\beta)=\log \beta$. It is shown in \cite{Hof} that $X_\beta$
has a unique measure of maximal entropy $\mu_{\rm max}^+$. Since $I$  is an entropy preserving bijection it follows from the variational principle that $\mu_{\rm max}=I^{-1}(\mu_{\rm max}^+)$ is the unique measure of maximal entropy of $\widehat{X}_\beta$. We have the following result.

\begin{example}\label{Example4}
Let $\beta>1$ be a non-integral real number which is not eventually periodic. Let $\widehat{X}_\beta$ denote the coded shift with generating set $\cG_\beta$ which is associated with $X_\beta$. Then $\widehat{X}_\beta$ has a unique measure of maximal 
entropy $\mu_{\rm max}$ which is $\cG_\beta$-Bernoulli with $p_g=\beta^{-|g|}$ and   
$c=\sum_{g\in \cG_\beta} |g| \beta^{-|g|}$. Moreover, $\mu^+_{\rm max}=I(\mu_{\rm max})$ is the unique measure of maximal entropy of the beta shift $X_\beta$.
\end{example}
\begin{proof}
 We already know that $\widehat{X}_\beta$ and $X_\beta$ have unique measures of maximal entropy     $\mu_{\rm max}$ and $\mu^+_{\rm max}$, respectively. We compute
\begin{align*}
 \sum_{g\in\cG_\beta}e^{-|g|h_{\rm top}(X_\beta)}& = \sum_{g\in\cG_\beta}e^{-|g|\log \beta}=\\
  \sum_{n=1}^\infty  b_n e^{-n \log \beta}&=
  \sum_{n=1}^\infty \frac{b_n}{\beta^{n}} =1, 
\end{align*}
and 
\begin{align*}
\sum_{g\in\cG}|g|e^{-|g|h_{\rm top}(X)}&= \sum_{n=1}^\infty n b_n e^{-n \log \beta}=\\
\sum_{n=1}^\infty \frac{nb_n}{\beta^{n}}&\leq\lfloor \beta \rfloor\sum_{n=1}^\infty \frac{n}{\beta^{n}}<\infty.
\end{align*}
This shows that condition (ii) in Corollary \ref{cor:h_seq=h_res} holds.
We conclude that $\mu_{\rm max}$ is the $\cG_\beta$-Bernoulli measure with $p_g=\beta^{-|g|}$ and   
$c=\sum_{g\in \cG_\beta} |g| \beta^{-|g|}$.
\end{proof}
 We note that the unique measure of maximal entropy of the beta shift $X_\beta$ corresponds to the unique absolutely continuous invariant probability measure $\nu_\beta$ of the $\beta$-transformation $f_\beta$ on the interval. A description for $\nu_\beta$ is given by Parry \cite{Parry}.

We finish this section with a comparison of Theorem \ref{thm:unique_mme} and results of Climenhaga \cite{Cl} and Pavlov \cite{Pavlov}.  
We note that definition \eqref{def:X_lim} of $X_{\rm lim}$ implies that $X_{\rm lim}$ is a shift space.
In \cite{Cl, Pavlov} the  uniqueness of the measure of maximal entropy is established under the assumption $h_{\rm top}(X_{\rm lim})<h_{\rm top}(X)$.  Similar uniqueness  results hold for equilibrium states of H\"older continuous potentials, see \cite{Cl}. We have established in Theorem \ref{thm:local_structure} examples of coded shifts $X(\cG)$ for which Theorem B guarantees the uniqueness of the measure of maximal entropy but the results of Climenhaga/Pavlov cannot be applied since $X_{\rm con}\subset X_{\lim}$ holds. 
The following observation shows that Theorem B is actually an extension of the corresponding result by Climenhaga and Pavlov.

\begin{corollary}\label{lem:ClPav}
Let $X=X(\cG)$ be a coded shift such that $\cG$ uniquely represents $X_{\rm con}$ and $h_{\rm top}(X_{\rm lim})<h_{\rm top}(X)$. Then
\begin{equation}\label{eqgen}
h_{\rm res}(X)<h_{\rm con}(X)=h_{\rm top}(X).
\end{equation}
\end{corollary}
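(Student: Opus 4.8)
The plan is to establish the single inequality $h_{\rm res}(X)\le h(X_{\rm lim})$ and then read off the rest of \eqref{eqgen} from the general decomposition $h_{\rm top}(X)=\max\{h_{\rm seq}(X),h_{\rm res}(X)\}$ recorded in the introduction, together with the standing hypothesis $h(X_{\rm lim})<h_{\rm top}(X)$. The two ingredients I would combine are Pavlov's measure-theoretic dichotomy $\mu(X_{\rm seq}\cup X_{\rm lim})=1$ for every $\mu\in\cM_\sigma(X)$ and the entropy bound of Lemma \ref{lem:ClPavsep}.

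First I would fix an arbitrary $\mu\in\cM_\sigma(X)$ with $\mu(X_{\rm res})=1$ and argue that in fact $\mu(X_{\rm lim})=1$. Since $X_{\rm res}=X\setminus X_{\rm seq}$, the hypothesis $\mu(X_{\rm res})=1$ forces $\mu(X_{\rm seq})=0$. Because $X_{\rm lim}$ is a subshift (closed and $\sigma$-invariant), it is Borel, and Pavlov's identity \cite{Pavlov} gives $1=\mu(X_{\rm seq}\cup X_{\rm lim})\le \mu(X_{\rm seq})+\mu(X_{\rm lim})=\mu(X_{\rm lim})$, so $\mu(X_{\rm lim})=1$. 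Applying Lemma \ref{lem:ClPavsep} with the $\sigma$-invariant set $Y=X_{\rm lim}$ then yields $h_\sigma(\mu)\le h(X_{\rm lim})$. Taking the supremum over all such $\mu$ in the definition \eqref{resentropy} of $h_{\rm res}(X)$ produces $h_{\rm res}(X)\le h(X_{\rm lim})$. (If no invariant measure charges $X_{\rm res}$ fully, this inequality holds vacuously.)

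To finish, I would combine $h_{\rm res}(X)\le h(X_{\rm lim})$ with the assumption $h(X_{\rm lim})<h_{\rm top}(X)$ to get $h_{\rm res}(X)<h_{\rm top}(X)$. Since $h_{\rm top}(X)=\max\{h_{\rm seq}(X),h_{\rm res}(X)\}$, the strict inequality $h_{\rm res}(X)<h_{\rm top}(X)$ can only hold if the maximum is attained by the sequential term, i.e.\ $h_{\rm seq}(X)=h_{\rm top}(X)$. Chaining these gives exactly \eqref{eqgen}, namely $h_{\rm res}(X)\le h(X_{\rm lim})<h_{\rm seq}(X)=h_{\rm top}(X)$.

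I do not expect a serious obstacle here, as the argument essentially assembles results already proved. The one point demanding care is the passage from $\mu(X_{\rm res})=1$ to $\mu(X_{\rm lim})=1$, where it is essential that Pavlov's dichotomy holds for \emph{every} invariant measure (so no ergodicity reduction is needed) and that $X_{\rm lim}$ is genuinely measurable. As the discussion preceding the statement stresses, the real content is that the growth-rate quantity $h(X_{\rm lim})$ controls the measure-theoretic quantity $h_{\rm res}(X)$; this is precisely what Lemma \ref{lem:ClPavsep} supplies, and it is the reason the implication $h(X_{\rm lim})<h_{\rm top}(X)\Rightarrow h_{\rm res}(X)<h_{\rm seq}(X)$ is not merely formal.
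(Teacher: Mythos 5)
Your proposal is correct and follows the paper's own proof essentially verbatim: both deduce $\mu(X_{\rm lim})=1$ from Pavlov's identity $\mu(X_{\rm seq}\cup X_{\rm lim})=1$ for measures with $\mu(X_{\rm res})=1$, invoke Lemma \ref{lem:ClPavsep} with $Y=X_{\rm lim}$ to get $h_{\rm res}(X)\leq h(X_{\rm lim})$, and then read off the rest of \eqref{eqgen} from $h_{\rm top}(X)=\max\{h_{\rm seq}(X),h_{\rm res}(X)\}$ and the hypothesis $h(X_{\rm lim})<h_{\rm top}(X)$. You merely spell out two steps the paper leaves implicit (the passage $\mu(X_{\rm res})=1\Rightarrow\mu(X_{\rm lim})=1$ and the max-decomposition argument), which is fine.
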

\begin{proof}
It suffices to prove $h_{\rm res}(X)\leq h_{\rm top}(X_{\rm lim})$ for \eqref{eqgen} to hold. Let $\mu\in \cM_\sigma(X)$ with $\mu(X_{\rm res})=1$.  If follows from \cite[Lemma 4.1]{Pavlov} that $\mu(X_{\rm lim})=1.$ Therefore, $h_\sigma(\mu)\leq h_{\rm top}(X_{\rm lim})$ follows from  the variational principle for the topological entropy.
\end{proof}

\section{Uniqueness of Equilibrium States}\label{sec:unieqsta}

In this section we establish  the uniqueness of equilibrium states for hyperbolic H\"older continuous potentials. Let $(X,\cG)$ be a coded shift such that $\cG$ uniquely represents $X_{\rm con}$. Recall that
\[
h(\cG)=\limsup_{n\to\infty} \frac{1}{n}\log   c(n),
\]
where $c(n)={\rm card}\{g\in \cG: |g|=n\}.$ We consider the case $h(\cG)=0$.

 As before, $\cM_\sigma(X)$ denotes the set of all shift-invariant Borel probability measures on $X$. The topological pressure of a continuous potential $\phi:X\to\bR$ satisfies the well-known variational principle, i.e.
\begin{equation}\label{varpri}
    P_{\rm top}(\phi)=\sup\left\{h_\sigma(\mu)+\int \phi\,d\mu: \mu\in\cM_\sigma(X)\right\},
\end{equation}
see, e.g., \cite{Wal:81} for details.  We say that $\mu\in \cM_\sigma(X)$ is an equilibrium state of $\phi$
if $h_\sigma(\mu)+\int \phi\, d\mu=P_{\rm top}(\phi)$. By expansivity of the shift map,
$\phi$ has at least one equilibrium state. Moreover, by compactness and convexity of the space of equilibrium states there exists at least one ergodic equilibrium state.
We say $\phi$ is a \emph{hyperbolic potential} if every equilibrium state of $\phi$ has non-zero entropy.

We define the \emph{concatenation pressure} of a potential $\phi$ by
\begin{equation}\label{def:SecPressureseq}
  P_{\rm con}(\phi)=\sup\left\{h_\sigma(\mu)+\int \phi\,d\mu: \mu\in\cM_\sigma(X)\text{ and } \mu(X_{\rm con})=1\right\},
\end{equation}
and the \emph{residual pressure} of $\phi$ by
\begin{equation}\label{def:SecPressureres}
  P_{\rm res}(\phi)=\sup\left\{h_\sigma(\mu)+\int \phi\,d\mu: \mu\in\cM_\sigma(X)\text{ and } \mu(X_{\rm res})=1\right\}.
\end{equation}
We continue to use the notation from Section \ref{sec:mme}. In particular, we denote by $\sigma_E:E\to E$ the induced map on $E$ and by $\bi:\cM_\con\to\cM_{\rm ind}$ the bijection with inverse $\bl:\cM_{\rm ind}\to \cM_\con$ (see Lemma \ref{lem:lift}). Moreover, we fix an enumeration $\{g_i\}_{ i\in\bN}$ of $\cG$ to define the map $\pi:Y\to E$ (see Equation \eqref{eqYpiE}) which conjugates the countable alphabet two-sided full shift $(Y,\sigma)$ with $\sigma_E$.

Our main objective in this section is to prove the following result.
\begin{theorem}\label{thm:uniqueeqsta}
Let $X=X(\cG)$ be a coded shift such that $\cG$ uniquely represents $X_{\con}$ and $h(\cG)=0$. Let $\phi$ be a hyperbolic H\"older continuous potential on $X$
with
$P_{\con}(\phi)>P_{\res}(\phi)$. Then $\phi$ has a unique equilibrium state $\mu_\phi$. Moreover, $\mu_\phi$ is the lift of the invariant Gibbs measure for the associated potential on the two-sided  countable full shift.
\end{theorem}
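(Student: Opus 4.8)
The plan is to imitate the proof of Theorem \ref{thm:unique_mme}, replacing the constant potential $-h_\top(X)$ by the potential obtained by inducing $\phi$ to the countable full shift $Y$. Write $P=P_{\seq}(\phi)$. Since every ergodic measure assigns full mass to $X_{\seq}$ or to $X_{\res}$, we have $P_\top(\phi)=\max\{P_{\seq}(\phi),P_{\res}(\phi)\}=P$, and the same dichotomy forces every ergodic equilibrium state $\mu$ to satisfy $\mu(X_{\seq})=1$: if $\mu(X_{\res})=1$ then $h_\sigma(\mu)+\int\phi\,d\mu\le P_{\res}(\phi)<P=P_\top(\phi)$, a contradiction. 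By the ergodic decomposition every equilibrium state lies in $\cM_{\seq}$, so by Lemma \ref{lem:lift} it is determined by a measure on $E$ and, via the conjugacy $\pi$ of \eqref{eqYpiE}, by an invariant measure on $Y$. Accordingly I would introduce the induced potential $\phi_E(x)=S_{r_E(x)}\phi(x)$ on $E$, its pullback $\psi=\phi_E\circ\pi$ to $Y$, and the normalization $\bar\psi(y)=S_{|g_{y_0}|}\phi(\pi(y))-|g_{y_0}|\,P$. Exactly as in Theorem \ref{thm:unique_mme}, the Abramov and Kac identities give, for $\mu\in\cM_{\seq}$ and $\nu=(\pi^{-1})_\ast\bi(\mu)$,
\begin{equation*}
h_\sigma(\mu)+\int\phi\,d\mu=P+\mu(E)\Big(h_\sigma(\nu)+\int\bar\psi\,d\nu\Big),
\end{equation*}
so that $\mu$ is an equilibrium state of $\phi$ if and only if $\nu$ is an equilibrium state of $\bar\psi$ realizing $h_\sigma(\nu)+\int\bar\psi\,d\nu=0$.

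Next I would check that $\bar\psi$ is covered by the theory of \cite{MU}. The H\"older continuity of $\phi$ makes $\bar\psi$ locally H\"older: if $y,y'\in[i]$ agree on $[-N,N]$, then $\pi(y)$ and $\pi(y')$ agree on a forward block of length $F\ge|g_i|+N$ and a backward block of length $B\ge N$, whence with $\theta=2^{-\alpha}$
\begin{equation*}
|\bar\psi(y)-\bar\psi(y')|\le\sum_{j=0}^{|g_i|-1}\big|\phi(\sigma^j\pi(y))-\phi(\sigma^j\pi(y'))\big|\le C\sum_{j=0}^{|g_i|-1}\big(\theta^{B+j}+\theta^{F-j}\big)\le\frac{2C}{1-\theta}\,\theta^{N}.
\end{equation*}
The geometric decay absorbs the factor $|g_i|$, so the H\"older constant is uniform in $i$; the same telescoping bounds the oscillation of $S_{|g_i|}\phi$ on $[g_i]$ by a constant independent of $i$ (bounded distortion). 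I would then establish $P_\top(\bar\psi)=0$ verbatim as in Theorem \ref{thm:unique_mme}: an equilibrium state in $\cM_{\seq}$ yields $P_\top(\bar\psi)\ge 0$, while the identity $P_\top(\bar\psi|_{Y_m})=\mu_m(E)^{-1}\big(h_\sigma(\mu_m)+\int\phi\,d\mu_m-P\big)\le0$ for the lifts $\mu_m=\bl(\pi_\ast\nu_m)$, together with $P_\top(\bar\psi)=\sup_m P_\top(\bar\psi|_{Y_m})$, yields $P_\top(\bar\psi)\le 0$.

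The decisive step is the pair of summability conditions \eqref{MU1}--\eqref{MU2}. Since $\bar\psi$ is constant up to bounded distortion on each $1$-cylinder, the full-shift pressure formula gives $P_\top(\bar\psi)=\log\sum_{g\in\cG}\exp(\sup_{[g]}S_{|g|}\phi-|g|P)$; as $P_\top(\bar\psi)=0<\infty$, the series $\sum_{g}\exp(\sup_{[g]}S_{|g|}\phi-|g|P)$ converges (and equals $1$), which is precisely \eqref{MU1}. Writing $b_n=\sum_{|g|=n}\exp(\sup_{[g]}S_n\phi)$, condition \eqref{MU2} amounts, after using bounded distortion and $-\sup_{[i]}\bar\psi\le|g_i|(P-\inf\phi)$, to $\sum_n n\,b_n e^{-nP}<\infty$, which does \emph{not} follow from the convergence of $\sum_n b_n e^{-nP}$ alone. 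This is the main obstacle, and I would resolve it as in Lemma \ref{lem:Gurevich}. The hypothesis $h(\cG)=0$ guarantees that the subexponential combinatorial growth $c(n)=e^{o(n)}$ does not contribute to the exponential rate $t_c=\limsup_n\frac1n\log b_n$, so that $t_c$ is governed by $\phi$ alone and coincides with the Gurevich pressure of the induced potential. When $t_c<P$ the series $\sum_n n\,b_n e^{-nP}$ converges trivially; in the critical case $t_c=P$ the strict inequality $P_{\seq}(\phi)>P_{\res}(\phi)$ enters: it guarantees an equilibrium state living on $X_{\seq}$, hence (via Gurevich's theorem, exactly as in Lemma \ref{lem:Gurevich}) positive recurrence of the induced potential, which upgrades \eqref{MU1} to \eqref{MU2}.

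Finally, being H\"older on the two-sided shift $Y$, $\bar\psi$ is cohomologous through a H\"older transfer function to a potential depending only on the non-negative coordinates, with the same suprema and infima over $1$-cylinders; the one-sided results of \cite{MU} then furnish a unique equilibrium state of $\bar\psi$ which is Gibbs. Pulling this measure back to $E$ by $\pi$ and lifting it by $\bl$ produces, through the equivalence of the first paragraph, a unique equilibrium state $\mu_\phi\in\cM_{\seq}$ of $\phi$; by construction $\mu_\phi$ is the lift of the invariant Gibbs measure of $\bar\psi$ on $Y$. I expect the verification of \eqref{MU2} in the critical case $t_c=P$ to be the principal difficulty, since it requires transporting the positive-recurrence argument of Lemma \ref{lem:Gurevich} from the entropy setting to H\"older potentials, the hypothesis $h(\cG)=0$ being what makes the combinatorial growth negligible and the Gurevich framework applicable.
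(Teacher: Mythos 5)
Your road map coincides with the paper's up to and including the pressure computation: induce on $E$, conjugate to the countable full shift via $\pi$, use Abramov/Kac to match equilibrium states of $\phi$ in $\cM_\seq$ with equilibrium states of the induced potential at pressure zero, prove the induced potential is locally H\"older (your telescoping estimate is exactly what the paper's Lemma \ref{hoelderinduced} leaves to the reader), and obtain $P_{\rm top}=0$ for the induced potential by the two-sided bound via the finite subshifts $Y_m$. The genuine gap is at the Mauldin--Urbanski summability condition \eqref{MU2}. Your argument never invokes the hypothesis that $\phi$ is \emph{hyperbolic}, and that hypothesis is precisely the paper's mechanism for closing this step. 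After normalizing $P_{\rm top}(\phi)=0$, Lemma \ref{critdec2} converts hyperbolicity into a uniform strict decay $S_n\phi(x)<-n\tau$ for all $x$ and all $n\geq N$; Bowen's cohomology lemma (Lemma \ref{bowen}), whose transfer function $u$ is \emph{bounded} (Corollary \ref{corubounded}), transports this to the one-sided induced potential, giving $\widetilde{\psi}^+\vert_{[i]}\leq -|g_i|\tau$ for $|g_i|\geq N$ (Corollary \ref{critdec}). Since $h(\cG)=0$ allows $c(n)\leq \exp(n\tau/2)$ for large $n$, the series in \eqref{MU2} is dominated by $\sum_{n\geq N}\bigl(n\max|\phi|+2\max|u|\bigr)\exp(-n\tau/2)<\infty$. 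In your notation, hyperbolicity forces $t_c\leq P-\tau<P$: the ``critical case'' $t_c=P$, which you single out as the principal difficulty and propose to handle by positive recurrence ``exactly as in Lemma \ref{lem:Gurevich}'', simply never occurs under the theorem's hypotheses, and recognizing this is the missing idea.

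Moreover, the critical-case sketch does not go through with the tools the paper cites. Lemma \ref{lem:Gurevich} rests on Gurevich's theorem and the Vere-Jones classification, which are counting statements about loops in a countable graph, i.e.\ the zero-potential case; neither \cite{MU} nor \cite{Sarig}, as used in the paper, supplies the weighted analogue that existence of an equilibrium state on $X_\seq$ forces $\sum_n n\,b_n e^{-nP}<\infty$ for a general H\"older $\phi$. A potential-theoretic recurrence theory for countable Markov shifts does exist in the literature, but invoking it would be a genuinely different and heavier proof, and you would still have to reconcile that notion of positive recurrence with MU's integrability condition \eqref{MU2}, which is what the uniqueness theorem \cite[Theorem 2.2.9]{MU} actually requires. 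A smaller inaccuracy: in your final reduction to the one-sided shift you assert the cohomologous potential has ``the same suprema and infima over $1$-cylinders''; Bowen's $\widetilde{\psi}=\psi-u+u\circ\sigma$ only matches these up to the additive error $2\max|u|$. This is harmless precisely because $u$ is bounded (Corollary \ref{corubounded}), but that boundedness must be stated, since it is also what allows the hyperbolic decay estimate to survive the cohomology.
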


The proof of Theorem \ref{thm:uniqueeqsta} relies on similar techniques as the proof of Theorem \ref{thm:unique_mme}. Therefore, we primarily discuss the differences in the proof.

From now on we assume that $\phi:X\to \bR$ is a H\"older continuous potential. By replacing $\phi$ with $\phi-P_{\rm top}(\phi)$ and noting that $\phi$ and $\phi-P_{\rm top}(\phi)$ have the same equilibrium states we may assume without loss of generality $P_{\rm top}(\phi)=0$. 
We start with some preliminary results.
\begin{lemma}\label{critdec2} Let $\phi$ be a hyperbolic potential with $P_{\rm top}(\phi)=0$. Then there exists  $\tau>0$ and $N \in \bN$ such that

\[S_n \phi(x) < - n \tau\]
for all $x\in X$ and all $n \geq N$.
\end{lemma}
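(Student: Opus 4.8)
The plan is to control the growth rate of the maximal Birkhoff sums and show it is strictly negative. Set $a_n=\sup_{x\in X}S_n\phi(x)$, which is finite since $\phi$ is continuous and $X$ is compact. The cocycle identity $S_{n+m}\phi(x)=S_n\phi(x)+S_m\phi(\sigma^n x)$ gives $a_{n+m}\le a_n+a_m$, so $(a_n)$ is subadditive and by Fekete's lemma the limit $\beta=\lim_{n\to\infty}\tfrac1n a_n=\inf_n\tfrac1n a_n$ exists. The statement then reduces to proving $\beta<0$: granting this, fix any $\tau$ with $0<\tau<-\beta$; since $\tfrac1n a_n\to\beta<-\tau$ there is $N\in\bN$ with $\tfrac1n a_n<-\tau$ for all $n\ge N$, and then $S_n\phi(x)\le a_n<-n\tau$ for every $x\in X$ and every $n\ge N$, which is exactly the claim.

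To show $\beta<0$ I would first realize $\beta$ as an integral against an invariant measure. Choose points $x_n\in X$ with $S_n\phi(x_n)\ge a_n-1$ and form the empirical measures $\mu_n=\tfrac1n\sum_{k=0}^{n-1}\delta_{\sigma^k(x_n)}$. Passing to a weak$^\ast$ convergent subsequence $\mu_{n_j}\to\mu_0$, the usual telescoping estimate $|\int f\,d\mu_n-\int f\circ\sigma\,d\mu_n|\le \tfrac{2\|f\|_\infty}{n}$ shows $\mu_0\in\cM_\sigma(X)$, and continuity of $\phi$ together with $\int\phi\,d\mu_{n}=\tfrac1n S_n\phi(x_n)$ and $a_n-1\le S_n\phi(x_n)\le a_n$ gives $\int\phi\,d\mu_0=\beta$. (Conversely $\int\phi\,d\nu=\tfrac1n\int S_n\phi\,d\nu\le\tfrac1n a_n$ for every invariant $\nu$, so in fact $\beta=\max_{\nu\in\cM_\sigma(X)}\int\phi\,d\nu$, although only the inequality $\int\phi\,d\mu_0\ge\beta$ is needed below.)

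Finally I would invoke the variational principle and hyperbolicity. Since $P_{\rm top}(\phi)=0$, the variational principle \eqref{varpri} yields $h_\sigma(\mu_0)+\int\phi\,d\mu_0\le 0$, i.e. $h_\sigma(\mu_0)\le-\beta$. If $\beta>0$ this is impossible because $h_\sigma(\mu_0)\ge0$. If $\beta=0$ then $h_\sigma(\mu_0)=0$ and $h_\sigma(\mu_0)+\int\phi\,d\mu_0=0=P_{\rm top}(\phi)$, so $\mu_0$ is an equilibrium state of zero entropy, contradicting the hypothesis that $\phi$ is hyperbolic. Hence $\beta<0$, completing the proof.

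The routine parts are the subadditivity and the final bookkeeping; the one step requiring care is the construction of the invariant measure $\mu_0$ with $\int\phi\,d\mu_0=\beta$, that is, verifying that the growth rate of the maximal Birkhoff sums is actually attained by an invariant measure. This is the standard ergodic-optimization fact that $\lim_n\tfrac1n\sup_x S_n\phi(x)=\max_\nu\int\phi\,d\nu$, and it is the hinge that lets hyperbolicity be brought to bear.
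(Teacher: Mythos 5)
Your proof is correct, and it reaches the same destination as the paper's proof by the same essential engine --- a weak$^\ast$ accumulation point of empirical measures along near-maximizing orbits, then the variational principle plus hyperbolicity to exclude a zero-entropy equilibrium state --- but the two arguments are organized differently. The paper argues directly by contradiction: negating the conclusion yields points $x(m)$ and times $l_m$ with $\frac{1}{l_m}S_{l_m}\phi(x(m))\geq -\frac{1}{m}$, and any weak$^\ast$ limit $\mu$ of the associated empirical measures is invariant with $\int \phi\, d\mu \geq 0$, which by the variational principle forces $\mu$ to be an equilibrium state with zero entropy, contradicting hyperbolicity. You instead first package the uniformity over $x$ into the subadditive sequence $a_n=\sup_{x\in X}S_n\phi(x)$, extract $\beta=\lim_n \frac{1}{n}a_n=\inf_n \frac{1}{n}a_n$ via Fekete, realize $\beta=\int\phi\,d\mu_0$ for an invariant $\mu_0$ (your careful step, which is sound: the bracketing $a_n-1\leq S_n\phi(x_n)\leq a_n$ and continuity of $\phi$ give the limit, and $a_n\geq n\inf\phi>-\infty$ keeps $\beta$ finite), and then rule out $\beta\geq 0$; note that your $\beta=0$ case is precisely the paper's contradiction, while $\beta>0$ is immediate. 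What your route buys: the uniform estimate for all $x\in X$ and all $n\geq N$ drops out transparently from $\frac{1}{n}a_n\to\beta<-\tau$ rather than from quantifier bookkeeping in the negated statement, and you obtain as a by-product the sharper ergodic-optimization identity $\lim_n\frac{1}{n}\sup_x S_n\phi=\max_{\nu\in\cM_\sigma(X)}\int\phi\,d\nu$, which pinpoints the optimal constant: the lemma holds for every $\tau<-\beta$ and for no $\tau>-\beta$. What the paper's route buys is brevity: it dispenses with Fekete and the two-sided identification of $\beta$ in a half-page contradiction argument.
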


\begin{proof}
Suppose the statement does not hold. Then there exists a sequence $\{x(m)\} \subset X$ and a strictly increasing sequence
$\{l_m\} \subset \bN$ such that
$$\frac{1}{l_m}S_{l_m} \phi (x(m)) \geq - \frac{1}{m}.$$
for all $m \in \bN$.
Let $\mu$ be a weak$^\ast$ accumulation point of the sequence of measures
$\{\mu_m\}$ given by
\[ \mu_m=\frac{1}{l_m}\sum^{l_m-1}_{k=0}\delta_{\sigma^k(x(m))}\]
Then $\mu$ is $\sigma$-invariant and $\int \phi\ d\mu \geq 0$.
It follows from the variational principle that $\mu$ is an equilibrium state with zero entropy. This contradicts
the assumption that $\phi$ is a hyperbolic potential.
\end{proof}

Given a  continuous potential $\phi:X\to \bR$, analogous as in the proof of Theorem \ref{thm:unique_mme} we define the  potential $\phi_E: E \rightarrow \bR$ by 
\begin{equation}\label{eqphiE}
\phi_E(x)= \sum^{r_E(x)-1}_{j = 0} \phi \circ \sigma^j(x),
\end{equation}
where $x=\dots g_{-2}(x)g_{-1}(x).g_{0}(x)g_{1}(x)g_2(x)\dots$ and $r_E(x)=|g_0(x)|$. Moreover, we define the induced potential $\psi:Y\to\bR$ by $\psi=\phi_E\circ \pi$.
Recall, that the $k$-variation $\phi: X \rightarrow \bR$
is given by
\[ \text{Var}_k(\phi):=\sup \{ |\phi(x)-\phi(y)|:  (x_{-k+1},...,x_{k-1})=(y_{-k+1},...,y_{k-1})\}.
\]
Following \cite{Bo}, we define
\[ \cF_X :=\{\phi: X \rightarrow \bR : \ \text{Var}_k(\phi)< c\alpha^k \text{ for some } c>0,\ \alpha \in (0,1),
\text{ all } k \in \bN \}.  \]
We use the analogous notation for the potential spaces $\cF_Y$ and $\cF_{Y^+}$.
As a direct consequence of the definition of H\"older continuity we obtain:
\begin{lemma}\label{lemHTX}
If $\phi: X \rightarrow \bR$ is H\"older continuous then $\phi \in \cF_X$.
\end{lemma}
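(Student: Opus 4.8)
The plan is to unwind the definitions and observe that H\"older continuity with respect to the standard metric on $X\subset \cA^{\bZ}$ is precisely an exponential decay condition on the central modulus of continuity, which is exactly what $\mathrm{Var}_k(\phi)$ measures. Recall that the natural metric on the shift space is $d(x,y)=2^{-n(x,y)}$, where for $x\neq y$ one sets $n(x,y)=\min\{|i|:\ x_i\neq y_i\}$ and $d(x,x)=0$. By the definition of H\"older continuity there are constants $C>0$ and $\gamma>0$ such that $|\phi(x)-\phi(y)|\leq C\,d(x,y)^{\gamma}$ for all $x,y\in X$.

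First I would translate the agreement condition appearing in the definition of $\mathrm{Var}_k$ into a bound on the metric distance. If $x$ and $y$ satisfy $(x_{-k+1},\dots,x_{k-1})=(y_{-k+1},\dots,y_{k-1})$, then either $x=y$, in which case $|\phi(x)-\phi(y)|=0$, or the first coordinate at which they differ has absolute value at least $k$, so that $n(x,y)\geq k$ and hence $d(x,y)\leq 2^{-k}$. Combining this with the H\"older estimate yields
\[
|\phi(x)-\phi(y)|\leq C\,(2^{-k})^{\gamma}=C\,(2^{-\gamma})^{k}.
\]

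Next I would take the supremum over all such pairs $x,y$ and set $\alpha=2^{-\gamma}$, which lies in $(0,1)$ since $\gamma>0$. This gives $\mathrm{Var}_k(\phi)\leq C\alpha^{k}$ for every $k\in\bN$. Finally, to match the strict inequality required in the definition of $\cF_X$, I would take $c=2C$ (or any $c>C$), so that $\mathrm{Var}_k(\phi)\leq C\alpha^{k}<c\alpha^{k}$ holds for all $k$, which shows $\phi\in\cF_X$.

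There is essentially no serious obstacle here, as the statement is a direct translation of definitions; the only points requiring a little care are the off-by-one bookkeeping between the central block $[-k+1,k-1]$ and the resulting distance bound $2^{-k}$, matching the H\"older exponent $\gamma$ to the decay base $\alpha=2^{-\gamma}$, and passing from the non-strict inequality $\mathrm{Var}_k(\phi)\leq C\alpha^{k}$ to the strict one demanded by the definition of $\cF_X$. I note that the base $2$ plays no special role: any equivalent metric of the form $d(x,y)=\theta^{n(x,y)}$ with $\theta\in(0,1)$ produces the same conclusion with $\alpha=\theta^{\gamma}$.
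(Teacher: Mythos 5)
Your proof is correct and is exactly the argument the paper intends: the paper states Lemma \ref{lemHTX} without proof, calling it ``a direct consequence of the definition of H\"older continuity,'' and your unwinding of the definitions (agreement on the central block $[-k+1,k-1]$ forces $d(x,y)\leq 2^{-k}$, hence $\mathrm{Var}_k(\phi)\leq C(2^{-\gamma})^k$, and taking $c>C$ secures the strict inequality with $\alpha=2^{-\gamma}\in(0,1)$) is precisely that direct consequence, with the off-by-one and strictness details handled correctly.
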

The next Lemma shows that  $\phi \in \cF_X$ implies $ \psi \in \cF_Y$.
The following result is standard and we leave the proof to the reader.

\begin{lemma}\label{hoelderinduced}
Let $X = X(\cG)$ such that $\cG$ uniquely represents $X_\con$.
Then $\phi \in \cF_X$ implies $\psi=\phi_E\circ \pi \in \cF_Y$.
\end{lemma}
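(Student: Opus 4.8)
The plan is to estimate $\text{Var}_k(\psi)$ directly and show it decays geometrically with the same ratio $\alpha$ coming from $\phi\in\cF_X$. Fix $y',y''\in Y$ with $(y'_{-k+1},\dots,y'_{k-1})=(y''_{-k+1},\dots,y''_{k-1})$ and set $x'=\pi(y')$, $x''=\pi(y'')$. Since $y'_0=y''_0$ the zeroth generators agree, so the return times coincide, $r_E(x')=r_E(x'')=|g_{y_0}|=:L$, and consequently $\psi(y')-\psi(y'')=\sum_{j=0}^{L-1}\big(\phi(\sigma^j x')-\phi(\sigma^j x'')\big)$ is a difference of two sums with the \emph{same} number of terms. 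The first step is to convert agreement of $y',y''$ on generator-coordinates into agreement of $x',x''$ on finite-alphabet coordinates: because the generators at positions $1,\dots,k-1$ (resp. $-1,\dots,-(k-1)$) coincide and each has length at least one, $x'$ and $x''$ agree on the block $[-M_-,\,M_+-1]$, where $M_-=\sum_{i=1}^{k-1}|g_{y_{-i}}|\ge k-1$ and $M_+-L=\sum_{i=1}^{k-1}|g_{y_i}|\ge k-1$.

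Next, for each $0\le j\le L-1$ the shifted points $\sigma^j x'$ and $\sigma^j x''$ agree on the symmetric window of radius $r_j:=\min(M_-+j,\,M_+-1-j)$, and one checks $r_j\ge k-1$ for every such $j$ (the left estimate uses $M_-\ge k-1$, the right one uses $M_+-1-j\ge M_+-L\ge k-1$). Hence $\phi\in\cF_X$ gives $|\phi(\sigma^j x')-\phi(\sigma^j x'')|\le \text{Var}_{r_j+1}(\phi)< c\,\alpha^{r_j+1}$. The main obstacle is that $L=|g_{y_0}|$ is unbounded over $Y$, so the crude estimate $|\psi(y')-\psi(y'')|<cL\alpha^k$ is useless; the summation must exploit the geometric decay \emph{from both ends} of the generator block. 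Using $\alpha^{r_j}=\max(\alpha^{M_-+j},\alpha^{M_+-1-j})\le \alpha^{M_-+j}+\alpha^{M_+-1-j}$ and summing two geometric series (reindexing the second by $i=L-1-j$) yields
\[
\sum_{j=0}^{L-1}\alpha^{r_j+1}\le \alpha\Big(\alpha^{M_-}\sum_{j\ge 0}\alpha^j+\alpha^{M_+-L}\sum_{i\ge 0}\alpha^i\Big)\le \frac{2\alpha^k}{1-\alpha},
\]
where both bounds use $M_-,\,M_+-L\ge k-1$. Crucially this is independent of $L$.

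Combining the two displays gives $\text{Var}_k(\psi)\le \tfrac{2c}{1-\alpha}\,\alpha^k$, and enlarging the constant to $c'=\tfrac{2c+1}{1-\alpha}$ turns this into the strict inequality $\text{Var}_k(\psi)<c'\alpha^k$ required by the definition of $\cF_Y$, with the same ratio $\alpha\in(0,1)$. Therefore $\psi=\phi_E\circ\pi\in\cF_Y$. I expect the only delicate points to be bookkeeping — verifying $r_j\ge k-1$ uniformly in $j$ and the reindexing of the second geometric sum — while the structural idea (agreement of finitely many generators forces agreement of a long, centered block of letters, and the Birkhoff-type sum defining $\psi$ is controlled by geometric decay from both endpoints) is exactly the mechanism that makes the bound uniform in the generator length.
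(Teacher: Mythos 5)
Your proof is correct and complete; the paper itself gives no argument here (it declares the lemma standard and leaves it to the reader), and what you have written is exactly the standard argument being alluded to: agreement of the generators $g_{y_i}$ for $|i|\le k-1$ forces $x'=\pi(y')$ and $x''=\pi(y'')$ to agree on a block extending at least $k-1$ letters beyond each end of the zeroth generator, and summing the geometric decay of $\mathrm{Var}_{r_j+1}(\phi)$ from \emph{both} endpoints of the Birkhoff sum makes the bound uniform in $L=|g_{y_0}|$, which is the only genuinely delicate point. Your bookkeeping checks out in detail — $r_j=\min(M_-+j,\,M_+-1-j)\ge k-1$ for all $0\le j\le L-1$, the reindexing $i=L-1-j$ in the second geometric series, the case $k=1$ (agreement only at $y'_0=y''_0$, matching the Mauldin--Urbanski convention quoted in the paper), and the enlarged constant $c'=(2c+1)/(1-\alpha)$ to restore the strict inequality in the definition of $\cF_Y$.
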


We say that $\psi,\widetilde{\psi}\in \cF_Y$ are cohomologous within the class $\cF_Y$ if there exists $u\in \cF_Y$ such that $\psi(x)=\widetilde{\psi}(x)-u(x)+u(\sigma(x))$.

We note that the results for the uniqueness of equilibrium states for countable alphabets by Mauldin and Urbanski \cite{MU}, and Sarig \cite{Sarig} are formulated for one-sided shift spaces. The following result allows the transition 
from the two-sided to the one-sided  shift with a countable alphabet.

\begin{lemma}\label{bowen} Let 
$\psi \in \cF_Y$. Then there exists  $\widetilde{\psi} \in \mathcal{F}_Y$ cohomologous to $\psi$ within the class $\cF_Y$
such that $\widetilde{\psi}(x)=\widetilde{\psi}(y)$ whenever $x_k=y_k$ for all $k\in \bN_0$.
\end{lemma}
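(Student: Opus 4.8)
The plan is to adapt the classical Sinai--Bowen reduction from two-sided to one-sided dependence (see \cite{Bo}) to the countable full shift $Y=\bN^{\bZ}$; the only point requiring care is that the alphabet is infinite, but this turns out to be harmless because membership in $\cF_Y$ already guarantees that the variations $\mathrm{Var}_k(\psi)$ are finite and decay geometrically. First I would fix once and for all an auxiliary left-infinite word $w=(w_k)_{k<0}$ over $\bN$ and define a retraction $r\colon Y\to Y$ that erases the past by setting $r(x)_k=x_k$ for $k\ge 0$ and $r(x)_k=w_k$ for $k<0$. By construction $r(x)$ depends only on the forward coordinates $(x_k)_{k\ge 0}$, and a direct check gives the alignment property that $\sigma r(x)$ and $r(\sigma x)$ agree in every coordinate $k\ge 0$ (both equal $x_{k+1}$ there).

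Next I would introduce the transfer function
\[
u(x)=\sum_{n=0}^{\infty}\bigl(\psi(\sigma^n x)-\psi(\sigma^n r(x))\bigr).
\]
Since $x$ and $r(x)$ agree in all coordinates $k\ge 0$, the points $\sigma^n x$ and $\sigma^n r(x)$ agree on the window $|k|\le n$, so the $n$-th summand is bounded by $\mathrm{Var}_{n+1}(\psi)\le c\alpha^{n+1}$ and $u$ is well defined. Telescoping $\sum_n\psi(\sigma^n x)-\sum_n\psi(\sigma^{n+1}x)=\psi(x)$ yields
\[
u(x)-u(\sigma x)=\psi(x)-\widetilde\psi(x),\qquad \widetilde\psi(x):=\sum_{n=0}^{\infty}\bigl(\psi(\sigma^n r(x))-\psi(\sigma^n r(\sigma x))\bigr).
\]
Because both $r(x)$ and $r(\sigma x)$ are functions of the forward coordinates of $x$ alone, $\widetilde\psi$ depends only on $(x_k)_{k\ge 0}$, which is exactly the asserted property. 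Convergence of the series defining $\widetilde\psi$ follows after reindexing as $\widetilde\psi(x)=\psi(r(x))+\sum_{n\ge 0}\bigl(\psi(\sigma^{n+1}r(x))-\psi(\sigma^n r(\sigma x))\bigr)$, since the alignment property gives that $\sigma^{n+1}r(x)$ and $\sigma^n r(\sigma x)$ agree on $|k|\le n$, bounding the $n$-th bracket by $\mathrm{Var}_{n+1}(\psi)$. The displayed identity reads $\psi=\widetilde\psi-(-u)+(-u)\circ\sigma$, which is the paper's notion of cohomology with transfer function $-u$.

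Finally I would verify that $u\in\cF_Y$, so that $-u$ is admissible and $\widetilde\psi=\psi-u+u\circ\sigma\in\cF_Y$ as a finite combination of functions with geometrically decaying variation. For $x,x'$ agreeing on $|k|\le K-1$ I would split $u(x)-u(x')$ at an index $m\approx K/2$: the tail $n>m$ is controlled termwise by $2c\alpha^{n+1}$, while for $n\le m$ one uses that $\sigma^n x,\sigma^n x'$ agree on $|k|\le K-1-n$ and that $r(x),r(x')$ share the same fixed past $w$ and agree in coordinates $0\le k\le K-1$, so $\sigma^n r(x),\sigma^n r(x')$ also agree on $|k|\le K-1-n$; each such term is then at most $2c\alpha^{K-n}\le 2c\alpha^{K-m}$. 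Choosing $m=\lfloor K/2\rfloor$ yields $\mathrm{Var}_K(u)\le C\beta^K$ for any $\beta\in(\sqrt{\alpha},1)$. I expect this variation estimate for $u$ to be the main (though routine) obstacle: it is the only place where the geometric decay of $\mathrm{Var}_k(\psi)$ must be combined with the splitting of the cocycle sum, and it is what forces the slightly worse ratio $\beta=\sqrt{\alpha}$. The infinite alphabet plays no role in any of these bounds, since every estimate is expressed through the finite quantities $\mathrm{Var}_k(\psi)$ guaranteed by $\psi\in\cF_Y$.
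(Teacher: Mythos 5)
Your proof is correct and takes essentially the same route as the paper, which simply invokes Bowen's argument \cite[Lemma 1.6]{Bo} with the same retraction $r$, transfer function $u(x)=\sum_{n\geq 0}\bigl(\psi(\sigma^n x)-\psi(\sigma^n r(x))\bigr)$, and $\widetilde{\psi}=\psi-u+u\circ\sigma$; your only deviation is filling in the erased past with a fixed word $w$ rather than the paper's repeated symbol $x_0$, which is immaterial on the full shift $Y=\bN^{\bZ}$. The variation estimates you write out explicitly (including the loss to the ratio $\sqrt{\alpha}$ via the splitting at $m\approx K/2$) are precisely the ones the paper leaves implicit when asserting that Bowen's proof generalizes to the countable alphabet without changes.
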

\begin{proof} The result was first stated by Sinai \cite{Si} for subshifts of finite type over finite alphabets. A complete proof of Sinai's result is given by Bowen \cite[Lemma 1.6]{Bo}. Bowen's proof generalizes to the countable alphabet case without changes.
We only include here those details of the proof that are used later on, see 
\cite[Lemma 1.6]{Bo} for the complete proof:

Given $x\in Y$ define $r(x)$ by $r(x)_k=x_k$ for $k\geq 0$ and $r(x)_k=x_0$ for $k\leq 0$. The function $u$ is defined by 
\begin{equation}\label{equ(x)}
    u(x)=\sum_{j=0}^\infty \left(\psi(\sigma^j(x))-\psi(\sigma^j(r(x)))\right).
\end{equation}
It follows that $u\in \cF_Y$.
Moreover, $\widetilde{\psi}=\psi-u+u\circ \sigma$ has the desired properties.
This follows from $\widetilde{\psi}=\psi(r(x))+v(x)$, where 
\begin{equation}\label{eqhap}
v(x)=\sum_{j=0}^\infty \left( \psi(\sigma^{j+1}(r(x))) -\psi(\sigma^j(r(\sigma(x))))\right).
\end{equation}
\end{proof}
We note that Y. Daon presents a proof of Lemma \ref{bowen} for potentials in the Walters class rather than H\"older potentials \cite[Theorem 3.1]{Da}.
We further note that functions in $\cF_Y$ are not necessarily bounded since $Y$ has an infinite alphabet. 
For example, the unbounded function $\phi(y)=y_0$ belongs to $\cF_Y$.
The next result shows that the function $u$ defined in \eqref{equ(x)} is bounded.
\begin{corollary}\label{corubounded}
Let $\psi,\widetilde{\psi}$ and $u$ be as in the proof of Lemma \ref{bowen}. Then 
$u$ is bounded.
\end{corollary}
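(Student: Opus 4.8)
The plan is to bound the defining series for $u$ term by term and to recognize it as a convergent geometric series whose sum does not depend on $x$. Since $\psi \in \cF_Y$, there are constants $c>0$ and $\alpha \in (0,1)$ with $\text{Var}_k(\psi) < c\alpha^k$ for all $k \in \bN$, and I would feed this decay directly into each summand $\psi(\sigma^j(x)) - \psi(\sigma^j(r(x)))$ appearing in \eqref{equ(x)}.

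First I would record the key combinatorial fact about the retraction $r$. By its definition $r(x)_k = x_k$ for $k \geq 0$, so $x$ and $r(x)$ agree in every non-negative coordinate and can differ only in negative coordinates. Applying $\sigma^j$, the points $\sigma^j(x)$ and $\sigma^j(r(x))$ have $m$-th coordinates $x_{m+j}$ and $r(x)_{m+j}$, which coincide whenever $m+j \geq 0$, that is, for all $m \geq -j$. In particular $\sigma^j(x)$ and $\sigma^j(r(x))$ agree on the symmetric window of coordinates $\{-j,-j+1,\dots,j\}$, which is precisely the window appearing in the definition of $\text{Var}_{j+1}$. Hence $|\psi(\sigma^j(x)) - \psi(\sigma^j(r(x)))| \leq \text{Var}_{j+1}(\psi) < c\alpha^{j+1}$.

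Summing over $j$ then gives, uniformly in $x$,
\[
|u(x)| \leq \sum_{j=0}^\infty \left|\psi(\sigma^j(x)) - \psi(\sigma^j(r(x)))\right| \leq \sum_{j=0}^\infty c\alpha^{j+1} = \frac{c\alpha}{1-\alpha},
\]
so $u$ is bounded. There is no serious obstacle here; the only point requiring care is to align the one-sided agreement of $\sigma^j(x)$ and $\sigma^j(r(x))$ (they match on all coordinates $\geq -j$) with the two-sided, symmetric window in the definition of $\text{Var}_k$. This alignment forces the choice $k=j+1$ and produces the extra factor of $\alpha$ that makes the geometric series converge. I would also remark that exactly the same estimate shows $u \in \cF_Y$, which is the statement used in Lemma \ref{bowen}, so boundedness is an immediate by-product of the variation bound rather than a separate computation.
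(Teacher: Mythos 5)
Your proof is correct and is essentially the paper's own argument: the paper notes that $\sigma^j(r(x))$ agrees with $\sigma^j(x)$ on the coordinate window $[-j,j]$ and concludes boundedness from \eqref{equ(x)} together with $\psi\in\cF_Y$, which is exactly your estimate $|\psi(\sigma^j(x))-\psi(\sigma^j(r(x)))|\leq \mathrm{Var}_{j+1}(\psi)<c\alpha^{j+1}$ summed into a geometric series. You have merely made explicit the index alignment $k=j+1$ that the paper leaves implicit.
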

\begin{proof}For $x\in X$ and $j\geq 0$ we note that $\sigma^j(r(x))\in \sigma^j(x)[-j,j]$. Therefore, the result follows from Equation \eqref{equ(x)} and the fact that $\psi\in \cF_Y$.
\end{proof}

Next we obtain a version of Lemma \ref{critdec2} for the countable alphabet case.

\begin{corollary}\label{critdec} Let $\phi:X\to \bR$ be a hyperbolic potential with $P_{\rm top}(\phi)=0$ and let $\psi=\phi_E\circ \pi: Y\to \bR$. Let $\widetilde{\psi}:Y\to \bR$ be as in Lemma \ref{bowen}.
Then there exists  $\tau>0$ and $N \in \bN$ such that for all $x\in Y$ with $n=n(x)=|g_{x_0}|\geq N$ we have $\widetilde{\psi}(x) < - n \tau$. 
\end{corollary}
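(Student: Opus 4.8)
The plan is to exploit the explicit relationship between $\psi$ and Birkhoff sums of $\phi$ and then absorb the resulting coboundary term using the boundedness of $u$ furnished by Corollary \ref{corubounded}. First I would record the bookkeeping identity that makes Lemma \ref{critdec2} directly applicable. For $x\in Y$ with $x_0=i$ the induced return time is $r_E(\pi(x))=|g_i|=n(x)$, so by the definition \eqref{eqphiE} of $\phi_E$ together with $\psi=\phi_E\circ\pi$ we have
\[
\psi(x)=\phi_E(\pi(x))=\sum_{j=0}^{n-1}\phi(\sigma^j(\pi(x)))=S_n\phi(\pi(x)),
\]
where $n=n(x)=|g_{x_0}|$. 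Since $\pi(x)\in X$, Lemma \ref{critdec2} applies verbatim: letting $\tau_0>0$ and $N_0\in\bN$ be the constants it provides, we obtain $\psi(x)<-n\tau_0$ for every $x\in Y$ with $n(x)\geq N_0$.

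Next I would transfer this bound from $\psi$ to $\widetilde{\psi}$. By Lemma \ref{bowen} we have $\widetilde{\psi}=\psi-u+u\circ\sigma$, and by Corollary \ref{corubounded} the function $u$ is bounded; set $M=\sup_{Y}|u|<\infty$. Then for every $x\in Y$,
\[
|\widetilde{\psi}(x)-\psi(x)|=|u(\sigma(x))-u(x)|\leq 2M,
\]
and combining this with the previous step gives $\widetilde{\psi}(x)<-n\tau_0+2M$ whenever $n(x)\geq N_0$. The final step is simply to absorb the additive constant $2M$ into the linear term by restricting to large $n$: put $\tau=\tau_0/2$ and choose $N\geq\max\{N_0,\,4M/\tau_0\}$, so that $2M\leq n\tau_0/2$ for all $n\geq N$. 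Then for every $x\in Y$ with $n(x)\geq N$,
\[
\widetilde{\psi}(x)<-n\tau_0+2M\leq -n\tau_0+\tfrac{n\tau_0}{2}=-\tfrac{n\tau_0}{2}=-n\tau,
\]
which is the claimed conclusion.

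I do not anticipate a genuine obstacle here; the statement is a short quantitative consequence of results already in hand. The only two points that need care are the identity $\psi(x)=S_n\phi(\pi(x))$, which is immediate from the definition of the induced potential and the fact that $\pi$ conjugates $(Y,\sigma)$ with $(E,\sigma_E)$, and the boundedness of the transfer function $u$, which is precisely Corollary \ref{corubounded}. In effect the corollary says that passing from $\phi$ to the induced potential on the countable shift and then to its Bowen normalization $\widetilde{\psi}$ preserves the uniform negativity of Birkhoff sums on long generators, because cohomologous potentials within $\cF_Y$ differ only by a bounded coboundary.
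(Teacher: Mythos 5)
Your proof is correct and is exactly the intended argument: the paper's one-line proof ("the result follows from $\psi=\phi_E\circ\pi$, Lemma \ref{critdec2} and Corollary \ref{corubounded}") is precisely your expansion, namely the identity $\psi(x)=S_n\phi(\pi(x))$ with $n=|g_{x_0}|$, the uniform bound $|\widetilde{\psi}-\psi|\leq 2\sup|u|$, and absorption of the additive constant by enlarging $N$ and shrinking $\tau$. The remark in the paper immediately after the corollary, that its constants $\tau$ and $N$ in general differ from those of Lemma \ref{critdec2}, confirms that your halving of $\tau_0$ and choice $N\geq\max\{N_0,4M/\tau_0\}$ is exactly what the authors had in mind.
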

\begin{proof}
The result follows from $\psi=\phi_E\circ \pi$, Lemma \ref{critdec2} and Corollary \ref{corubounded}. 
\end{proof}
We note that the constants $\tau$ and $N$ in Corollary \ref{critdec} do in general differ from the corresponding constants in Lemma \ref{critdec2}.

Given $m\in \bN$ we define $Y_m=\{1,\dots,m\}^\bZ$ and consider $Y_m$ as a subshift of $Y$.
We need another auxiliary result.
\begin{lemma}\label{lem:MaUr}
Let $\psi\in \cF_Y$. Then $P_{\rm top}(Y,\psi)=\sup_{m\in \bN} P_{\rm top}(Y_m,\psi\vert_{Y_m}).$
\end{lemma}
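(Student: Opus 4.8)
The plan is to prove the two inequalities separately; write $P=P_{\rm top}(Y,\psi)$ and $P_m=P_{\rm top}(Y_m,\psi|_{Y_m})$, and recall that both may be computed as $\inf_n\frac1n\log Z_n$ thanks to the subadditivity of $(\log Z_n)_n$ noted in Section~\ref{sec:prelim}.

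First I would dispose of the easy inequality $P\ge\sup_m P_m$. Each $Y_m=\{1,\dots,m\}^\bZ$ is a closed $\sigma$-invariant subset of $Y$, so for a word $w\in\{1,\dots,m\}^n$ we have $[w]\cap Y_m\subseteq[w]\cap Y$ and hence $\sup_{[w]\cap Y_m}S_n\psi\le\sup_{[w]\cap Y}S_n\psi$; moreover $\cL(Y_m,n)=\{1,\dots,m\}^n$ is a subcollection of $\cL(Y,n)=\bN^n$. Combining these two effects gives $Z_n(Y_m,\psi|_{Y_m})\le Z_n(Y,\psi)$ for every $n$, whence $P_m\le P$ after dividing by $n$ and letting $n\to\infty$. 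Taking the supremum over $m$ yields $\sup_m P_m\le P$.

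The reverse inequality $P\le\sup_m P_m$ is the heart of the matter. It is tempting to argue combinatorially: for fixed $n$ one indeed has $Z_n(Y_m,\psi|_{Y_m})\uparrow Z_n(Y,\psi)$ as $m\to\infty$, because $\bigcup_m([w]\cap Y_m)$ is dense in $[w]\cap Y$ and $S_n\psi$ is continuous and finite on each central cylinder (as $\operatorname{Var}_1\psi<\infty$), so monotone convergence applies term by term. This, however, only gives $P=\inf_n\sup_m\frac1n\log Z_n(Y_m)\ge\sup_m\inf_n\frac1n\log Z_n(Y_m)=\sup_m P_m$, i.e.\ the easy direction once more; the real content is the interchange of the limit in $n$ with the supremum in $m$. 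This interchange is precisely the \emph{approximation property} of the topological pressure on countable shifts proved by Mauldin and Urbanski \cite{MU}, applied here to the full shift $Y$, for which the sub-alphabets $\{1,\dots,m\}$ are cofinal among all finite subsystems. I would therefore quote \cite{MU} for this step, which also covers the case $P=+\infty$.

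For a self-contained argument I would instead route through the variational principle for $(Y,\sigma)$ (the full shift has the BIP property, so the results of \cite{Sarig,MU} apply): given $\epsilon>0$, choose an ergodic $\mu\in\cM_\sigma(Y)$ with $\int\psi\,d\mu>-\infty$ and with $h_\sigma(\mu)+\int\psi\,d\mu$ within $\epsilon$ of $P$ (respectively exceeding $1/\epsilon$ when $P=+\infty$), and then approximate $\mu$ by $\sigma$-invariant measures $\mu_m$ supported on $Y_m$ for which $h_\sigma(\mu_m)+\int\psi\,d\mu_m\to h_\sigma(\mu)+\int\psi\,d\mu$; this forces $\sup_m P_m\ge P-\epsilon$. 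I expect this last approximation to be the main obstacle: since the alphabet is infinite, neither the entropy nor the integral of $\psi$ is automatically continuous under truncation, so the measures $\mu_m$ must be produced carefully---e.g.\ by an inducing/Kac construction on the finite sub-alphabet, or by exploiting that $h_\sigma(\mu)=\lim_m h_\sigma(\mu,\alpha_m)$ for the truncated generating partitions $\alpha_m$---while the bounded-distortion estimate coming from $\psi\in\cF_Y$ keeps the Birkhoff sums of $\psi$ under control. As this is exactly the construction carried out in \cite{MU}, in the text I would present the lemma as a consequence of their result and spell out only the easy inequality in full.
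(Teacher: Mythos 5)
Your proposal is correct and takes essentially the same route as the paper: the paper's entire proof consists of citing \cite[Lemma 2.1.2]{MU} (proved there for one-sided countable shifts and the weaker class of acceptable potentials) together with the remark that the two-sided case is entirely analogous, which is precisely the reduction you settle on after spelling out the easy inequality $\sup_m P_m\le P$ and correctly diagnosing that the naive $\inf$--$\sup$ interchange only reproduces it. The one detail you gloss over is that the results of \cite{MU} are stated for one-sided shifts, so strictly speaking one must add, as the paper does, that the argument transfers without change to $Y=\bN^{\bZ}$.
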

\begin{proof}
    The result is proven in \cite[Lemma 2.1.2]{MU} for the one-sided countable alphabet case for acceptable potentials which is a weaker condition than belonging to $\cF_Y$. The proof in the two-sided countable alphabet case is entirely analogous. 
\end{proof}

Next we describe how to transition from the two-sided countable alphabet full shift $(Y,\sigma)$ to the one-sided countable alphabet full shift $(Y^+,\sigma)$ where $Y^+=\bN^{\bN_0}$. There is a natural bijection $b:\cM_\sigma(Y)\to \cM_\sigma(Y^+)$ defined by $\nu^+=b(\nu)$ where $\nu^+([w])=\nu([w])$ for all $w\in \bN^n, n\in \bN$. Clearly, $\nu^+$ is well-defined since  invariant measures are uniquely determined by their values on cylinders. Further, Equation \eqref{eqn:def:meas_ent} shows that
\begin{equation}
h_{\sigma}(\nu)=h_{\sigma}(\nu^+).
\end{equation}
Let $\psi$ be a continuous potential on $Y$ with $\psi(x)=\psi(y)$ whenever $x_k=y_k$ for all $k\geq 0$. We  define a continuous potential $\psi^+$ on $Y^+$ by
$\psi^+(y^+)=\psi(y)$ where $y\in Y$ with $y_k=y^+_k$ for all $k\geq 0$. It follows directly from the definitions that  
\begin{equation}
    \int \psi\, d\mu = \int \psi^+\, d\mu^+.
\end{equation}
Hence,
\begin{equation}\label{eqY2Y+}
h_{\sigma}(\nu)+\int \psi\, d\nu=h_{\sigma}(\nu^+)+\int \psi^+\, d\nu^+.
\end{equation}

Let $\psi$ be a continuous potential on $Y$ with $P_{\rm top}(Y,\psi)<\infty$. Following \cite{MU} we say $\nu\in \cM_\sigma(Y)$ is an equilibrium state of $\psi$ if $\int -\psi \,d \nu <\infty$ and $P_{\rm top}(Y,\psi)=h_\sigma(\nu)+\int \psi d\, \nu.$ Analogously, we define equilibrium states for continuous potentials on $Y^+$. We have the following:

\begin{lemma}\label{lemeqYYp}
Let $\psi:Y\to \bR$ be a continuous potential with $P_{\rm top}(Y,\psi)<\infty$ and $\psi(x)=\psi(y)$ whenever $x_k=y_k$ for all $k\geq 0$. Then $P_{\rm top}(Y,\psi)=P_{\rm top}(Y^+, \psi^+)$.
Moreover, $\nu\in \cM_\sigma(Y)$ is an equilibrium state of $\psi$ if and only if $\nu^+=b(\nu)$ is an equilibrium state of $\psi^+$.
\end{lemma}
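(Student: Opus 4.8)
The plan is to establish the pressure identity first, directly from the definition of the partition function in \eqref{eqn:def:P}, and then to deduce the correspondence of equilibrium states from it together with the already-established relation \eqref{eqY2Y+}. This route deliberately avoids the finite-alphabet approximation of Lemma \ref{lem:MaUr}, which is stated only for potentials in $\cF_Y$, whereas here $\psi$ is merely continuous. Let $p:Y\to Y^+$ denote the coordinate projection $p(x)_k=x_k$ for $k\ge 0$. Since $\psi$ depends only on the nonnegative coordinates, the definition of $\psi^+$ gives $\psi=\psi^+\circ p$, and since $p\circ\sigma=\sigma\circ p$ we obtain $\psi\circ\sigma^k=\psi^+\circ\sigma^k\circ p$ for all $k\ge 0$; summing over $0\le k\le n-1$ yields $S_n\psi(x)=S_n\psi^+(p(x))$ for all $x\in Y$ and $n\ge 1$. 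Both $Y$ and $Y^+$ are full shifts, so $\cL(Y,n)=\cL(Y^+,n)=\bN^n$, and for a fixed word $w\in\bN^n$ the projection $p$ maps the two-sided cylinder $[w]\subset Y$ onto the one-sided cylinder $[w]\subset Y^+$. Consequently
\[
\sup_{x\in[w]}S_n\psi(x)=\sup_{x\in[w]}S_n\psi^+(p(x))=\sup_{x^+\in[w]}S_n\psi^+(x^+),
\]
so the two $n$-th partition functions coincide term by term, $Z_n(\psi)=Z_n(\psi^+)$ for every $n$, and taking $\tfrac1n\log(\cdot)$ and letting $n\to\infty$ gives $P_{\rm top}(Y,\psi)=P_{\rm top}(Y^+,\psi^+)$.

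With the pressure identity in hand, the equivalence of equilibrium states is a bookkeeping step. The map $b$ is a bijection of $\cM_\sigma(Y)$ onto $\cM_\sigma(Y^+)$ with $\nu^+=p_*\nu$ (this is immediate on cylinders, since $p^{-1}([w])=[w]$ and $\nu^+([w])=\nu([w])$), so by the change-of-variables formula $\int-\psi\,d\nu=\int-\psi^+\,d\nu^+$ as elements of $[0,\infty]$; in particular the integrability constraint $\int-\psi\,d\nu<\infty$ holds if and only if $\int-\psi^+\,d\nu^+<\infty$. By \eqref{eqY2Y+} we also have $h_\sigma(\nu)+\int\psi\,d\nu=h_\sigma(\nu^+)+\int\psi^+\,d\nu^+$. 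Combining these with $P_{\rm top}(Y,\psi)=P_{\rm top}(Y^+,\psi^+)$, the defining equality $h_\sigma(\nu)+\int\psi\,d\nu=P_{\rm top}(Y,\psi)$ (under the finite-integral constraint) holds precisely when $h_\sigma(\nu^+)+\int\psi^+\,d\nu^+=P_{\rm top}(Y^+,\psi^+)$ does. Hence $\nu$ is an equilibrium state of $\psi$ if and only if $\nu^+=b(\nu)$ is an equilibrium state of $\psi^+$, as claimed.

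The only genuine care needed is in the partition-function step: one must verify that passing to the two-sided shift does not inflate the suprema $\sup_{[w]}S_n\psi$. This is exactly where the hypothesis that $\psi$ ignores the negative coordinates enters—$S_n\psi$ is constant along the backward fibers of $p$, so enlarging a one-sided cylinder to a two-sided one introduces no new values—and it is what lets us bypass the variational principle and Lemma \ref{lem:MaUr} for a merely continuous potential. I expect this verification, rather than any analytic estimate, to be the main (and essentially only) obstacle; the assumption $P_{\rm top}(Y,\psi)<\infty$ is used only to ensure that the notion of equilibrium state is meaningful on both sides.
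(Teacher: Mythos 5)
Your proposal is correct and takes essentially the same route as the paper, whose (one-sentence) proof likewise derives the pressure identity directly from the definition of topological pressure and obtains the equilibrium-state correspondence from Equation \eqref{eqY2Y+}. Your term-by-term verification that $Z_n(\psi)=Z_n(\psi^+)$ via the projection $p$ (using that $S_n\psi$ is constant on backward fibers, so the suprema over two-sided and one-sided cylinders agree) is exactly the detail the paper leaves implicit.
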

\begin{proof}
 The statements follow from the definition of the topological pressure and Equation \eqref{eqY2Y+}.
\end{proof}
We note that Lemma \ref{lemeqYYp} does not imply the existence of an equilibrium state of $\psi$ and $\psi^+$, respectively. We are now ready to present the proof of Theorem \ref{thm:uniqueeqsta}.

\begin{proof}[Proof of Theorem \ref{thm:uniqueeqsta}]

Let $\phi:X\to\bR$ be a hyperbolic potential. Without loss of generality (by replacing $\phi$ with $\phi-P_{\rm top}(\phi)$) we may assume $P_{\rm top}(\phi)=0$. Let $\psi=\phi_E\circ \pi$ be the induced potential of $\phi$. It follows from Lemmas \ref{lemHTX} and \ref{hoelderinduced} that $\psi\in \cF_Y$.
Let $\widetilde{\psi}\in \cF_Y$ be the potential associated with $\psi$, as in Lemma \ref{bowen}, i.e., $\widetilde \psi(x)$ does not depend on $x_k$ for $k<0$. Moreover, let $\widetilde{\psi}^+$ the potential on $Y^+$ associated with $\widetilde{\psi}$. Hence, $\widetilde{\psi}^+\in \cF_{Y^+}$.
Let $I:\cM_{\rm con}\to \cM_\sigma(Y^+)$ be defined by $I(\mu)=((\pi^{-1})_*\mu_E)^+$. 
It follows from Lemma \ref{lem:lift} that $I$ is injective. Here we also use the facts that $(\pi^{-1})_*$ and $\nu\mapsto \nu^+$ are injective maps on $\cM_{\rm ind}$ and $\cM_\sigma(Y)$, respectively. 
Let $\mu\in \cM_{\rm con}$ and $\nu=(\pi^{-1})_* \mu_E$. 
It follows from Abramov's formula, Lemma  \ref{bowen} and the definition of $\nu^+$ that
\begin{equation}\label{eq:freeenergy}
\begin{split}
h_\sigma(\nu)+\int \psi\,d\nu=    h_\sigma(\nu)+\int \widetilde{\psi}\,d\nu=
h_\sigma(\nu^+)+\int \widetilde{\psi}^+\,d\nu^+\\
=\frac{1}{\mu(E)}\left(h_\sigma(\mu)+\int \phi\,d\mu\right).
\end{split}
\end{equation}
Let $\mu_{\rm \phi}\in\cM_\sigma(X)$ be an ergodic equilibrium state of $\phi$. We will show that $\mu_{\rm \phi}$ is the unique equilibrium state of $\phi$. Since the space of $\phi$-equilibrium states is compact and convex, it suffices to show that $\mu_{\rm \phi}$ is the unique ergodic equilibrium state of $\phi$.
It follows from $P_{\rm con}(\phi)>P_{\rm res}(\phi)$ that $\mu_\phi(X_{\rm con})=1$, that is, $\mu_{\rm \phi}\in \cM_{\rm con}$. 

We claim that $P_{\rm top}(Y,\psi)=P_{\rm top}(Y,\widetilde{\psi})=P_{\rm top}(Y^+,\widetilde{\psi}^+)=0.$

To prove the claim we first show $P_{\rm top}(Y^+,\widetilde{\psi}^+)=0$.
Let $\nu_{\rm \phi}^+=I(\mu_{\rm \phi})$. Thus, by \eqref{eq:freeenergy}, $h_\sigma(\nu_{\rm \phi}^+)+\int \widetilde{\psi}^+\,d\nu_{\rm \phi}^+=0$. Therefore, $P_{\rm top}(Y^+,\widetilde{\psi}^+)\geq 0$ follows from the variational principle for countable alphabet one-sided shift spaces, see \cite[Theorem 2.1.8]{MU}.
The inequality $P_{\rm top}(Y^+,\widetilde{\psi}^+)\leq 0$ follows analogously as in the proof of Theorem \ref{thm:unique_mme} (also see \cite[Proposition 29]{BDWY}) by approximating $P_{\rm top}(Y^+,\widetilde{\psi}^+)$ with the pressure 
$P_{\rm top}(Y^+_m,\widetilde{\psi}^+|_{Y^+_m})$, where $Y^+_m=\{1,\dots,m\}^{\bN_0}$, and taking the limit $m\to\infty$. We conclude that  $P_{\rm top}(Y^+,\widetilde{\psi}^+)=0$ and that $\nu_{\rm \phi}^+$ is an equilibrium state of $\widetilde{\psi}^+$. The identity $P_{\rm top}(Y,\widetilde{\psi})=P_{\rm top}(Y^+,\widetilde{\psi}^+)$ holds by Lemma \ref{lemeqYYp}. Finally, the identity $P_{\rm top}(Y,\psi)=P_{\rm top}(Y,\widetilde{\psi})$ follows by combining Lemma \ref{lem:MaUr} with the fact that $P_{\rm top}(Y_m,\psi)=P_{\rm top}(Y_m,\widetilde{\psi})$ since the potentials $\psi$ and $\widetilde{\psi}$ are cohomologous. This proves the claim.

Since $v_{\rm \phi}^+=I(\mu_{\rm \phi})$ is an ergodic equilibrium state of the potential $\widetilde{\psi}^+$ and since the map $I$ is injective it suffices to show that
that $\widetilde{\psi}^+$ has only one equilibrium state.
Let $\nu^+_{\rm Gibbs}$ be the unique invariant Gibbs measure of $\widetilde{\psi}^+$ of $Y$. The existence of this measure follows from applying \cite[Theorem 1]{Sarig} to $\widetilde{\psi}^+\in \cF_{Y^+}$. 
Let $N$ and $\tau$ be as in Corollary \ref{critdec} and let $c(n)=|\{g_i: |g_i|=n\}|.$ Since $h(\cG)=0$, we may assume (by making $N$ larger if necessary) that $c(n) \leq \exp(\frac{n\tau}{2})$ for all $n \geq N$. 
For $i\in\bN$ we write
\[
a_i=\inf \left(- \widetilde \psi^+|_{[i]}\right)\exp\left(\inf \left( \widetilde \psi^+|_{[i]}\right)\right).\]  
Let $A=\sum_{i=1}^{\infty}a_i.$
Recall that $a_i> 0$ whenever $|g_i|\geq N$. Therefore, we may rewrite $A$ as

 \begin{equation}
 A=\sum^{\infty}_{n=1} \sum_{|g_i|=n}a_i=\sum^{N-1}_{n=1} \sum_{|g_i|=n}a_i+\sum^{\infty}_{n=N} \sum_{|g_i|=n}a_i.
 \end{equation}
Clearly, $\sum^{N-1}_{n=1} \sum_{|g_i|=n}a_i<\infty$.
Moreover, it follows from $\widetilde{\psi}=\psi-u+u\circ \sigma$, the definition of $\widetilde \psi^+$ and Equation \eqref{eqphiE} that
\begin{align*}
\sum^{\infty}_{n=N} \sum_{|g_i|=n}a_i&\leq \sum_{n=N}^\infty   (n \max |\phi| +2\max |u|) c(n) \exp(-n\tau)\\
&\leq  \sum_{n=N}^\infty (n \max |\phi|+2\max |u|) \exp\left(-\frac{n\tau}{2}\right).
\end{align*}
We conclude that $A < \infty$. Thus, by \cite[Lemma 2.2.8]{MU},  $\int -\widetilde{\psi}^+\, \nu^+_{\rm Gibbs}<\infty$, and \cite[Theorem 2.2.9]{MU} implies that $\nu^+_{\rm Gibbs}$ is the unique equilibrium state of $\widetilde{\psi}^+$. In particular, $\nu^+_{\rm Gibbs}=\widetilde{\nu}^+_{ \phi}$. Since $\widetilde{\nu}^+_{\phi}$
is the unique invariant Gibbs measure of $\widetilde \psi^+$ it follows that $\nu_{\phi}=(\pi^{-1})_*\mu_{\phi,{\rm E}}$ is the invariant Gibbs measure of $\psi$. This shows that the unique equilibrium state $\mu_{\rm \phi}$ of $\phi$ is the lift of a Gibbs measure on $Y$.

\end{proof}

 Climenhaga \cite{Cl} showed the uniqueness of equilibrium states for H\"older continuous potentials $\phi$ under the assumption $P_{\rm top}(X_{\rm con},\phi)>P_{\rm top}(X_{\rm lim},\phi)$. 
Below we establish the existence of abundantly many coded shift spaces $X$ satisfying ${X}_{\rm con} \subset  X_{\lim} $. In particular,  Climenhaga's result cannot be applied to derive the uniqueness of equilibrium states of H\"older continuous potentials on $X$. The proof of next result follows along the lines of the proof of Theorem \ref{thm:local_structure}.

\begin{corollary}\label{cor:local_structure} 
Let $X=X(\cG)$ be a coded shift so that
$\cG$ uniquely represents $X_{\con}$ with $h(\cG)=0$. Then there exists a generating set $\widetilde{\cG}=\cG\cup \{f_i: i\in\bN\}$ such that the coded shift $\widetilde X = X(\widetilde \cG)$ has the following properties:
\begin{enumerate}
\item[(i)]   $\widetilde \cG$ uniquely represents $\widetilde X_{\con}$, and $h(\widetilde \cG)=0$;
\item[(ii)] $\widetilde{X}_{\rm con} \subset \widetilde X_{\lim} $.
\end{enumerate}
\end{corollary}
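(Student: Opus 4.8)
The plan is to reuse, essentially verbatim, the combinatorial construction from the proof of Theorem \ref{thm:local_structure}, discarding the steps that controlled the entropy (those were the only ones requiring $h_\seq(X)>h_\res(X)$) and appending one short growth-rate estimate to guarantee $h(\widetilde\cG)=0$. First I would fix two distinct words $u,v$ of equal length $|u|=|v|$ that do not occur in $\cL(X)$ and whose periodic points $p_u=u^{\infty}_\infty$, $p_v=v^\infty_{\infty}$ have prime period $|u|$ and disjoint orbits, so that $\{p_u,p_v\}\cap X=\emptyset$. Then I would inductively build words $w(i)$ and generators $f_i=u^{2^i|w(i)|}w(i)v^{2^i|w(i)|}$ exactly as in \eqref{deffi}: seed as in the proof of Theorem \ref{thm:local_structure} by fixing some $g\in\cG$ with $|g|\geq 3$ and setting $w(1)=g$, and for $i\geq 2$ choose $w(i)$ to be a concatenation of generators of $\cG_i=\cG\cup\{f_1,\dots,f_{i-1}\}$ that contains every word of length $i$ in $X(\cG_i)$, begins and ends with a generator in $\cG$, and satisfies $|w(i)|<|w(i+1)|$ and $w(i)\subset w(i+1)$. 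Setting $\widetilde\cG=\cG\cup\{f_i:i\in\bN\}$ and $\widetilde X=X(\widetilde\cG)$, I emphasize that the only hypothesis used in the construction is that $\cG$ uniquely represents $X_\seq$; the inequality $h_\seq(X)>h_\res(X)$ plays no role here.

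Next I would transport the two structural conclusions of Theorem \ref{thm:local_structure} that are independent of the entropy inequality. For unique representation, the same argument applies: no $x\in\widetilde X_\seq$ can contain an infinite chain of generators in $\{f_i:i\in\bN\}$, and for a maximal finite chain $f_{i_1}\subset\cdots\subset f_{i_l}$ of $x$ the outermost block $f_{i_l}$ is forced, so that---using that $p_u,p_v$ have prime period $|u|$---the decomposition of $x$ into generators is unique; hence $\widetilde\cG$ uniquely represents $\widetilde X_\seq$. For the inclusion, given $x\in\widetilde X_\seq$ and $n\in\bN$, the nesting $w(i)\subset w(i+1)$ together with the fact that every window $x[-n,n]$ eventually lies inside some $f_i$ produces $N\in\bN$ with $x[-n,n]\subset f_i$ for all $i\geq N$, so $x\in\widetilde X_\lim$ and therefore $\widetilde X_\seq\subset\widetilde X_\lim$. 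These are precisely the unique-representation claim in the first listed property and the second listed property.

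Finally I would establish the one genuinely new point, namely $h(\widetilde\cG)=0$. Writing $c(n)=|\{g\in\cG:|g|=n\}|$ and $\tilde c(n)=|\{g\in\widetilde\cG:|g|=n\}|$, the length formula $|f_i|=|w(i)|(1+2^{i+1}|u|)$ is strictly increasing in $i$, so the $f_i$ have pairwise distinct lengths and at most one $f_i$ has any prescribed length $n$. Thus $\tilde c(n)\leq c(n)+1$, whence $\tfrac1n\log\tilde c(n)\leq \tfrac{\log 2}{n}+\tfrac1n\log c(n)$ whenever $c(n)\geq 1$ (and the bound is trivial when $c(n)=0$). Taking $\limsup_{n\to\infty}$ and invoking $h(\cG)=0$ yields $h(\widetilde\cG)=0$, which completes the growth condition in the first listed property. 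I expect this last estimate to be the only substantive new ingredient; the main subtlety is simply the bookkeeping observation that adding at most one generator per length cannot raise a subexponential growth rate, whereas the heavier combinatorial work is inherited unchanged from Theorem \ref{thm:local_structure}.
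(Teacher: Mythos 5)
Your proposal is correct and takes essentially the same route as the paper, which gives no separate proof of Corollary \ref{cor:local_structure} beyond saying it follows along the lines of the proof of Theorem \ref{thm:local_structure}: you reuse the construction $f_i=u^{2^i|w(i)|}w(i)v^{2^i|w(i)|}$ verbatim and correctly observe that the unique-representation argument and the inclusion $\widetilde{X}_{\rm seq}\subset \widetilde X_{\lim}$ never invoke the entropy inequality $h_{\seq}(X)>h_{\res}(X)$. Your one genuinely new step, the bound $\tilde c(n)\leq c(n)+1$ coming from the strict monotonicity of $|f_i|=|w(i)|\left(1+2^{i+1}|u|\right)$, is valid and correctly yields $h(\widetilde \cG)=0$, which is precisely the point the theorem's proof leaves to be checked.
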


 Let $\widetilde X=X(\widetilde \cG)$ be as in Corollary \ref{cor:local_structure}. Then $h(\widetilde G)=0$ and $\widetilde X_{\rm con}\subset \widetilde X_{\rm lim}$. By Theorem \ref{thm:uniqueeqsta}, every hyperbolic H\"older continuous potential $\phi$ with $P_{\rm top}(X_{\rm con},\phi)>P_{\rm top}(X_{\rm res}, \phi)$ has a unique equilibrium state $\mu_\phi$. This result cannot be deduced from \cite{Cl}.

Finally, we discuss the Dyck shift as an example where neither the results of Climenhaga \cite{Cl} nor Theorem \ref{thm:uniqueeqsta} can be applied to establish the uniqueness of equilibrium states for any H\"older continuous potential. However, our techniques can still be used to characterize the ergodic measures of maximal entropy. We recall the simple description of this shift,
given in terms of parentheses and brackets. The alphabet of cardinality four is split into two
pairs of matching left and right symbols denoted by (, ), [, ]. The Dyck shift is the smallest shift space which contains the set of bi-infinite sequences where every opening parenthesis ( must be closed by ) and every opening bracket [ must be closed by ]. Krieger \cite{Krieger} showed that the Dyck shift has topological entropy $\log 3$ and admits exactly two ergodic measures of maximal entropy, both are fully supported and Bernoulli. It is known that the Dyck shift is coded. We describe below its canonical generating set $\cG$ and compute the exact value of $h(\cG)$. By using an alternative coding we are able to provide a simple expression for its measures of maximal entropy.

\begin{example}\label{ex:Dyck} The Dyck shift is a coded shift with the generating set $\cG=\bigcup\limits_{n=1}^\infty W_n$ where the sets of words $W_n$ are recursively defined by
\begin{align*}
  W_1 & =\{(\,),[\,]\} \\
  W_n & =\{(w),[w]:w\in(\cup_{i=1}^{n-1}W_i)^*\text{ and } |w|=2(n-1)\}.
\end{align*}
Moreover, $\cG$ uniquely represents $X_\con$ and $h(\cG)=\frac32\log 2$. Let $\cG_1=\cG\cup\{(,[\,\}$ and $\cG_2=\cG\cup\{\,),]\}$. Then the two ergodic measures of maximal entropy of $X$ are  $\cG_1$- respectively $\cG_2$-Bernoulli with $p_g=3^{-|g|}$ and $c=2$.
\end{example}
\begin{proof}
  It was noted in \cite[Example 5.5]{Pavlov} that the canonical generator $\cG$ uniquely represents $X_\con$. Clearly, in this case $X_{\rm lim}(\cG)=X(\cG)$. The residual set consists of elements from $\left(\cG\cup\{(,[\,\}\right)^*$ and $\left(\cG\cup\{\,),]\}\right)^*$ where at least one bracket/parenthesis is present, as well as ``symmetric" concatenations $x(-\infty,k)y[k,\infty)$ where $x\in\left(\cG\cup\{(,[\}\right)^*$, $y\in\left(\cG\cup\{),]\}\right)^*$ and the order of open parentheses and brackets between the generating words in $x(-\infty,k)$ moving to the left is the same as the order of corresponding closed ones in $y[k,\infty)$ moving to the right.

  Next, we compute the entropy of the generating set $\cG$. The formula $$|W_n|=\frac{2^{n-1}(2n)!}{(2n-1)(n!)^2}$$ is derived using standard methods, which we briefly outline below since we require some of the ingredients later on. Denote by $d_n$ the cardinality of the set $W_n$. It follows from the definition of $W_n$ that
  $$
  d_{n+1}=2\sum_{k=1}^{n}\sum_{i_1+...+i_k=n}d_{i_1}d_{i_2}...d_{i_k}.
  $$
  If $d_{i_k}=j$ for some $j\in\{1,...,n\}$ we can partition the remainder $(n-j)$ into $k$ parts where $k$ is at most $n-j$. We obtain
  $$
  d_{n+1}=2\sum_{j=1}^{n}d_{j}\sum_{k=1}^{n-j}\sum_{i_1+...+i_k=n-j}d_{i_1}d_{i_2}...d_{i_k}=\sum_{j=1}^{n}d_{j}d_{n-j+1}.
  $$
  To solve the recursive relation above we use the generating function $F(x)=\sum_{n=1}^{\infty}d_nx^n$. We recall that $d_1=2$ and compute
    $$F(x) = 2x+\sum_{n=1}^{\infty}d_{n+1}x^{n+1} = 2x + \sum_{j=1}^{\infty}\sum_{n=j}^{\infty} d_jx^j d_{n-j+1}x^{n-j+1}= 2x+F(x)^2. $$
  We obtain $F(x)^2-F(x)+2x=0$, so that $F(x)=\frac{1-\sqrt{1-8x}}{2}$ (we picked the negative sign since $F(0)=0$). We use the binomial formula
  $$\sqrt{1-8x}=\sum_{n=0}^{\infty}\frac{(-1)^{n+1}}{4^n(2n-1)}\binom{2n}{n}(-8x)^n,$$
  and get
  $$ F(x) = \frac12\left(1-\sqrt{1-8x}\right)=\sum_{n=1}^{\infty}\frac{2^{n-1}}{2n-1}\binom{2n}{n}x^n.$$
  Therefore, $d_n=\frac{2^{n-1}}{2n-1}\binom{2n}{n}=\frac{2^{n-1}(2n)!}{(2n-1)(n!)^2}$, as required.

  Since the length of words in $W_n$ is $2n$, to compute the entropy of the generating set we need to evaluate $\lim\limits_{n\to\infty}\frac{\log |W_n|}{2n}$. We apply Stirling's formula $n!\approx \frac{n^n}{e^n}\sqrt{2\pi n}$. Since $\log d_n \approx 3n\log 2-\log 2-\log(2n-1)+\log\sqrt{4\pi n}-2\log\sqrt{2\pi n}$,
  we see that $$\lim_{n\to\infty}\frac{\log d_n}{2n}=\frac32\log 2.$$

  Now consider the generating set $\cG_1=\cG\cup\{(,[\,\}$ and the corresponding coded shift $X(\cG_1)$. It follows from the structure of the residual set with respect to the canonical generator $\cG$ that $X=X(\cG)=X(\cG_1)$. We need to verify that $\cG_1$ uniquely represents $X_\con(\cG_1)$. First, we show that no two generators in $\cG$ can overlap. To the contrary, assume that generators $u=u_1...u_n$ and $v=v_1...v_m$ in $\cG$ overlap, i.e. $u_i=v_1,u_{i+1}=v_2,...,u_n=v_{n-i+1}$ for some $1<i<n$. Then $u_i$ is the open parenthesis (or bracket) which must have a corresponding closed one within the symbols $u_{i+1},...,u_{n-1}$ by the property of a generating word $u$. However, $u_i=v_1$ and the corresponding closed parenthesis is $v_m$ which is not in $u$. Now suppose that there is $x\in X_\con(\cG_1)$ which can be represented as two different concatenations of elements from $\cG_1$. Then there are two increasing sequences of integers $(k_i)_{i\in\bZ}$ and $(m_j)_{j\in\bZ}$ such that $x[k_i,k_{i+1}),x[m_j,m_{j+1})\in \cG_1$ and for some fixed $i,j$ we have $k_i\le m_j<k_{j+1}<m_{j+1}$. Since words in $\cG$ do not overlap, we must have $x[k_i,k_{i+1})\in\{(,[\,\}$ so that $k_i=m_j$ and $k_{i+1}=k_i+1$. Let $n=\max\{l\ge j:k_{l+1}=k_l+1\}$. Note that $k_{n+1}<m_{j+1}-1$ since $x_{k_{n+1}}$ is an open parenthesis/bracket and the word $x[m_j,m_{j+1})$ must end with a closed parenthesis/bracket. The words $x[k_{n+1},k_{n+2})$ and $x[m_j,m_{j+1})$ are both in $\cG$ and overlap, which is a contradiction.

  Recall that $h_{\rm top}(X)=\log 3$ and $d_n={\rm card}\{g\in\cG_1: |g|=2n\}$. We evaluate
 $$
    \sum_{g\in\cG_1}e^{-|g|\log 3}  =2e^{-\log 3}+\sum_{n=1}^{\infty}d_n e^{-2n\log 3} = \frac{2}{3}+ F(3^{-2}) = 1.
 $$
Moreover, 
$$c=\sum_{g\in\cG_1}|g|e^{-|g|\log 3}=2e^{-\log 3}+2\sum_{n=1}^{\infty}nd_n e^{-2n\log 3}=
\frac{2}{3}+2(xF'(x))|_{x=3^{-2}}.$$
By the formula for $F$ above $F'(x)=2(1-8x)^{-1/2}$, hence $F'(3^{-2})=6$ and $c=2$. 
 
 By Corollary \ref{cor:h_seq=h_res}, the $\cG_1$-Bernoulli measure $\mu_1$ with probabilities $p_g=3^{-|g|}$ and constant $c=2$ is an ergodic measure of maximal entropy on $X$. Applying the same argument to the generating set $\cG_2$ we see that the second ergodic measure of maximal entropy $\mu_2$ is $\cG_2$-Bernoulli with the same probabilities $p_g$ and constant $c$. \end{proof}

 Note that in the previous example $h_\con(X(\cG_1))=h_\res(X(\cG_1))$ and the coded shift $X(\cG_1)$ has two ergodic measures of maximal entropy, $\mu_1$ and $\mu_2$, where $\mu_1$ assigns full measure to $X_\con(\cG_1)$ and $\mu_2$ assigns full measure to $X_\res(\cG_1)$. This gives an example for the non-uniqueness of measures of maximal entropy for a coded shift whose residual and concatenation entropies are equal.

\section*{Acknowledgements} 
The authors express their most sincere gratitude to the anonymous referee for many helpful suggestions which
greatly improved the paper, and clarified the proofs  and  results.
The authors further thank Yun Yang for many comments and discussions which greatly contributed to the paper. This work was significantly
advanced while the authors were supported by the Research in Pairs program at the Mathematisches Forschungsinstitut Oberwolfach. T. Kucherenko was partially supported by a grant from Simons Foundation (\#855117 to T. Kucherenko). M. Schmoll was partially supported by a grant from the Simons Foundation (\#318898 to Martin Schmoll). C. Wolf was partially supported by  grants from  the Simons Foundation (\#637594 to Christian Wolf) and PSC-CUNY (\#6D132-00 03 to Christian Wolf).

\bibliographystyle{amsplain}

\end{document}